%% file: ms.tex
\documentclass[11pt]{amsart}

\usepackage{amsmath,amssymb,amsfonts,mathtools}
\usepackage{graphicx,microtype,placeins,flafter}
\usepackage[margin=1in]{geometry}
\usepackage[authoryear,round]{natbib}
\usepackage[hidelinks,hypertexnames=false]{hyperref}

\numberwithin{equation}{section}

\theoremstyle{plain}
\newtheorem{theorem}{Theorem}[section]
\newtheorem{proposition}[theorem]{Proposition}
\newtheorem{lemma}[theorem]{Lemma}
\newtheorem{corollary}[theorem]{Corollary}
\theoremstyle{remark}
\newtheorem{remark}[theorem]{Remark}

\newcommand{\C}{\mathbb C}
\newcommand{\D}{\mathbb D}
\newcommand{\R}{\mathbb R}
\newcommand{\Pm}{\mathcal P}
\newcommand{\Rm}{\mathcal R}
\newcommand{\capc}{\operatorname{cap}}
\newcommand{\dist}{\operatorname{dist}}
\newcommand{\Arg}{\operatorname{Arg}}
\newcommand{\e}{\mathrm e}
\newcommand{\ii}{\mathrm i}
\newcommand{\whC}{\widehat{\mathbb C}}

\hypersetup{pdftitle={Convergence of repeated-pole rational approximation of the exponential on infinite sectors},pdfauthor={Fei Xue and Tianqi Zhang}}
\title[Convergence of repeated-pole exponential approximation]{Convergence of repeated-pole rational approximation of the exponential on infinite sectors}

\author{Fei Xue}
\address{School of Mathematical and Statistical Sciences, Clemson University,
Clemson, South Carolina 29634, USA}
\email{fxue@clemson.edu}
\thanks{Fei Xue was supported in part by the National Science Foundation under
grant DMS-2608905.}

\author{Tianqi Zhang}
\address{School of Data Sciences, Zhejiang University of Finance and Economics,
18 Xueyuan Street, Hangzhou 310018, China}
\email{tianqiz@zufe.edu.cn}

\begin{document}
\raggedbottom

\begin{abstract}
For $h>0$, degree $m\ge0$, $0\le\beta<\pi$, and $q>0$, let
$\Sigma_\beta=\{re^{i\theta}:r\ge0,\ |\theta|\le\beta/2\}$ and let
$E_{m,\beta}(q)$ denote the best uniform error for approximating $e^{-hz}$ on
$\Sigma_\beta$ by rational functions whose only possible finite pole is
$s=-q/h$, with multiplicity at most $m$. For the linear scaling
$q=\alpha m$ with a fixed $\alpha>0$, a M\"obius map reduces the problem to
polynomial approximation on a compact lens. Write
$\gamma_\beta=2-\beta/\pi$ and let $\alpha_c(\gamma_\beta)$ denote the
saddle-coalescence threshold. For
$0<\alpha<\alpha_c(\gamma_\beta)$, let
$H_\beta(\alpha)\in(0,1)$ be the exponential of the real saddle action. An
exact boundary Faber integral, a global fractional-power contour deformation,
and a two-saddle expansion yield the sharp two-sided order
$E_{m,\beta}(\alpha m)\asymp m^{-1/2}H_\beta(\alpha)^m$, uniformly on
compact subsets of $0<\alpha<\alpha_c(\gamma_\beta)$. On this
noncoalescing interval, the exponential factor $H_\beta(\alpha)$ has the
unique minimizer
$\alpha_\beta^*=1/[\gamma_\beta\sin(\pi/(2\gamma_\beta))]$ and minimum
$\tan(\pi/(4\gamma_\beta))$, recovering the classical half-line factor
$\tan(\pi/8)=\sqrt{2}-1$ at $\beta=0$. Numerical-range estimates give
matrix dimension-independent upper bounds for projected shift-and-invert
Arnoldi, while finite-degree pole searches and nonnormal experiments
illustrate the distinction between the sector-uniform law and individual
matrix computations.
\end{abstract}

\keywords{rational approximation, repeated pole, matrix exponential, Faber
polynomials, saddle-point asymptotics, sectorial numerical range}

\maketitle

\bigskip
\section{Introduction}\label{sec:introduction}

We study a restricted rational approximation problem for the matrix exponential in which all finite poles are concentrated at the same real point.  Its main computational advantage is that repeated use of the same shifted linear solve allows a single factorization or preconditioner to be reused.  This idea appears in restricted-denominator (RD) and rational Krylov methods for the exponential; see, for example, \citep{NorsettWolfbrandt1977,Norsett1978,MoretNovati2004,Novati2011, RostamiXue2018,GuettelShao2026a,GuettelShao2026b}. For broader polynomial and rational Krylov background, see also \citep{Saad1992,EshofHochbruck2006,Guettel2013}.

The scalar approximation problem considered here is posed on an \emph{infinite sector}, not on a bounded circular sector or a local corner neighborhood. For $h>0$ and $0\le\beta<\pi$, let $\Sigma_\beta=\{re^{\ii\theta}:r\ge0,\ |\theta|\le\beta/2\}$, where $\beta$ is the total opening. For an integer degree $m\ge0$ and $q>0$, let $E_{m,\beta}(q)$ denote the best uniform error for approximating $\e^{-hz}$ on $\Sigma_\beta$ by rational functions finite at infinity whose only possible finite pole is $s=-q/h$, with multiplicity at most $m$. Our asymptotic regime is $q=\alpha m$, where $\alpha>0$ is the fixed dimensionless pole-to-degree ratio.

Geometric convergence with one repeated pole on unbounded sets is classical. \citet{SaffSchonhageVarga1976} constructed an explicit family with denominator $(1+z/n)^n$ and proved geometric convergence on the half-line and on an infinite sector. \citet{SaffVargaNi1976} studied further Pad\'e and non-Pad\'e families in infinite sectors; related repeated-denominator formulas were investigated by \citet{VanIseghem1984,NorsettWolfbrandt1977,Norsett1978}. An early complex-domain note by \citet{KrestininPavlov1981} also considered one-pole approximation of the exponential in an unbounded region. The specific question pursued here is matching upper and lower asymptotics of the \emph{best} error as a function of the sector opening and a prescribed linear pole ratio.

On the half-line, the dependence on the prescribed pole ratio is known. \citet{Andersson1981} described the root-asymptotic error through a cubic saddle problem and found the optimal ratio $1/\sqrt2$ with root factor $\sqrt2-1$. Other real-pole approximation results include \citep{Lau1977,KaufmanTaylor1978,Borwein1983}; unrestricted rational minimax approximation, where the poles are free in the complex plane, is a different problem~\citep{CodyMeinardusVarga1969,Aptekarev2002}. The companion half-line paper~\citep{XueZhang2026} treats the uniform transition from fixed to linearly moving poles, including vanishing pole-to-degree ratios. It also resolves weighted $L^2$ oscillatory profiles and the mismatch between a nominal degree and the actual stopping degree. In particular, the $\beta=0$, $q=\alpha m$ specialization of the prefactor-resolved two-sided order proved below coincides with the exact-linear specialization of the companion result; we retain that endpoint as part of the sector family and as a normalization check. For genuine sectors $\beta>0$, the new obstacle is global: the Faber phase contains a noninteger power, and descending contours can encounter its negative-real branch cut. A boundary barrier and cancellation of the connecting segments identify the relevant saddle pair. Thus the present sector analysis shares the companion's M\"obius--saddle--noncancellation strategy but requires new fractional-power contour topology; it yields an angle-dependent best-error order, an explicit minimizer of the proved subcritical rate, and numerical-range consequences for nonnormal matrices. The lens map and the matrix transfer mechanism themselves are established tools.

The natural tool is Faber approximation. The M\"obius map used below sends the infinite sector to a compact spindle-shaped lens and converts the repeated-pole rational class exactly into polynomials. This geometry and the RD--Arnoldi/Faber framework are established tools \citep{MoretNovati2001,MoretNovati2004,Novati2003,Novati2011}; see also \citep{BeckermannReichel2009,GeorgievaHofreither2025}. The closest algorithmic comparisons are \citet{MoretNovati2004} and \citet{Novati2011}; their shift prescriptions and error estimates help identify the remaining issues about best approximation on the infinite sector.

In \citet[Cor.~3.4]{MoretNovati2004}, the Arnoldi error is bounded by an explicit factor \(C_\ell\) times an Arnoldi residual-polynomial factor. To compare parameters, denote their iteration count by $\ell$ (their rational degree is $\ell-1$), their scaled shift by $\tau$, and their sector half-angle by $\vartheta=\beta/2$. Their prescription $\tau=\ell$ minimizes the explicit factor $C_\ell$ with respect to $\tau$ and corresponds asymptotically to $q/m\to1$ in our notation. However, the remaining Arnoldi factor also depends on the shift and on the particular matrix--vector pair, so this optimization does not optimize the full bound or the scalar best error. More explicitly, put $s_\vartheta=\sin((\pi-2\vartheta)/4)$. Their formula~(3.23) and Stirling's formula give
\begin{equation*}
 C_\ell(\ell,\vartheta)
 =\frac{\e^\ell(\ell-1)!}{\pi\ell^\ell s_\vartheta^{2\ell+1}}
 \sim\frac{\sqrt{2/\pi}}{s_\vartheta\sqrt\ell}
       \left(\frac{2}{1-\sin\vartheta}\right)^\ell.
\end{equation*}
Thus even this minimized factor grows exponentially, and a decaying error bound requires compensation from the Arnoldi factor. Their Section~3 discussion gives root-superlinear decay of that residual-polynomial factor for a fixed compact operator, but supplies neither a quantitative onset nor a uniform estimate for changing shifted inverses in the finite-dimensional regime. In particular, it does not furnish a sharp finite-dimensional, sector-uniform rate for the practically relevant range in which the Arnoldi degree is much smaller than the matrix dimension. This observation concerns what these estimates alone imply; it does not preclude rapid convergence for particular operators.

\citet[Sec.~5, Prop.~12]{Novati2011} removes this matrix-dependent residual-polynomial factor from the sectorial bound by combining the RD--Arnoldi error representation with a Faber-capacity estimate. In the exponential case, his prescription corresponds asymptotically to $q/m\to\sec(\beta/2)$, and his explicit geometric factor, denoted here by $\rho_{\rm N}(\beta/2)$, satisfies $1/2\le\rho_{\rm N}(\beta/2)<1$ for $\beta/2<0.48124\ldots$; in the self-adjoint case, $\rho_{\rm N}(0)=1/2$. This gives an explicit mesh-independent geometric upper bound, but not the sharp scalar repeated-pole minimax rate. On the half-line, for example, the sharp scalar factor at $q=m$ is $0.422469\ldots$, below Novati's $1/2$, while optimizing the scalar ratio gives the classical value $\sqrt2-1$. The present work determines, for every fixed \(0\le\beta<\pi\), the best scalar error for each fixed ratio in the subcritical range, including both its sharp exponential factor and its algebraic scale, with matching upper and lower bounds, and then optimizes this exponential factor over the proved noncoalescing interval. The resulting matrix bounds are consequences of the scalar theorem and do not assert matching errors or optimal shifts for each matrix. Novati's a posteriori Arnoldi estimates and his treatment of other functions arising in exponential integrators concern complementary problems not addressed here.

The distinction from both earlier analyses is structural: the fixed lens capacity alone does not determine the diagonal rate because the target changes with the degree. The saddle action of the degree-dependent transformed exponential must also be included, and for a genuine sector its fractional power creates the global branch-cut and contour-topology problem treated below.

For each fixed $\beta$, write $\gamma=\gamma_\beta=2-\beta/\pi\in(1,2]$. The analysis proceeds in three steps. First, the lens map gives an exact boundary Faber coefficient integral with phase $\psi_{\alpha,\gamma}(\zeta)=\alpha\zeta^\gamma+\log((1-\zeta)/(1+\zeta))$ and saddle equation
\begin{equation}\label{eq:intro-saddle}
  \alpha\gamma_\beta\zeta^{\gamma_\beta-1}(1-\zeta^2)=2.
\end{equation}
We identify the relevant descending contours and obtain a two-saddle expansion (Theorem~\ref{thm:two-saddle}), uniform for $\alpha$ in compact subsets of the noncoalescing range $0<\alpha<\alpha_c(\gamma_\beta)$, where $\alpha_c(\gamma_\beta)$ is the saddle-coalescence threshold identified later. Second, two neighboring Faber coefficients cannot both be suppressed by destructive interference. Let $H_\beta(\alpha)$ denote the exponential of the real action at the relevant saddle. Together with tail estimates, Theorem~\ref{thm:best-error} gives the two-sided law
\begin{equation}\label{eq:intro-strong-law}
  E_{m,\beta}(\alpha m)\asymp
  m^{-1/2}H_\beta(\alpha)^m.
\end{equation}
Here $\asymp$ means bounds above and below by positive constants independent of $m$ and uniform on the chosen compact ratio set. 
Third, Proposition~\ref{prop:optimal-rate} optimizes the exponential factor \(H_\beta(\alpha)\) over \(0<\alpha<\alpha_c(\gamma_\beta)\). Its unique minimizer is \(\alpha_\beta^*\), with minimum value \(\rho_\beta^*\):
\begin{equation}\label{eq:intro-optimum}
  \alpha_\beta^*
  =\frac{1}{\gamma_\beta\sin\left(\frac{\pi}{2\gamma_\beta}\right)},
  \qquad
  \rho_\beta^*
  =\tan\!\left(\frac{\pi}{4\gamma_\beta}\right).
\end{equation}
At $\beta=0$ these reduce to the well-known half-line values of \cite{Andersson1981}. 

The curvature of \(\log H_\beta\) and the uniform two-sided law imply (Corollary~\ref{cor:finite-degree-minimizers}) that any minimizer \(\widehat\alpha_m\) of the finite-degree best error \(E_{m,\beta}(\alpha m)\) over a compact subcritical interval containing \(\alpha_\beta^*\) satisfies \(|\widehat\alpha_m-\alpha_\beta^*|=O(m^{-1/2})\). The root factors also quantify the degree overhead of the angle-independent rule $q=m$, providing a sharp best-approximation benchmark for this simple choice.

All asymptotic constants may depend on the fixed angle \(\beta\); no joint uniformity as $\beta\uparrow\pi$ is asserted. The scalar law has two consequences: (i) any polynomial interpolation scheme with subexponentially growing Lebesgue constants inherits the exponential factor \(H_\beta(\alpha)\) of the best-approximation error, and (ii) the uniform scalar error on the sector transfers to matrix functions through the numerical range. The repeated-pole class is exactly one shift-and-invert Krylov search space, so the subcritical rate-minimizing scalar pole gives one repeated real shift that can reuse a single matrix LU factorization or preconditioner.

The remainder of the paper is organized as follows. Section~\ref{sec:geometry} fixes the exact M\"obius and Faber geometry. Section~\ref{sec:coefficients} performs the saddle analysis. Section~\ref{sec:error-transfer} turns coefficients into best errors and optimizes the pole ratio. Section~\ref{sec:numerics} checks the scalar formulas and finite-degree pole selection; Section~\ref{sec:matrix} gives the matrix consequences and an Arnoldi experiment, and Section~\ref{sec:conclusion} concludes this study. For a first reading, the line of proof is: infinite sector $\longrightarrow$ compact lens $\longrightarrow$ boundary Faber integral $\longrightarrow$ two saddles $\longrightarrow$ two-sided best-error order $\longrightarrow$ the minimized exponential factor.

\section{M\"obius reduction and sectorial Faber geometry}\label{sec:geometry}

In this section, we transform the unbounded sector problem onto a fixed compact set.  The M\"obius transformation converts the repeated-pole rational class exactly into polynomials on a lens; the exterior map of that lens then supplies the Faber coordinate used in the rest of the paper.  We write $\D:=\{z\in\C:|z|<1\}$ for the unit disk and $\whC:=\C\cup\{\infty\}$ for the Riemann sphere.

Fix $h>0$ and $0\le \beta<\pi$, with $\beta$ fixed throughout the asymptotic analysis, and write $\Sigma_\beta:=\{re^{\ii\theta}:r\ge0,\ |\theta|\le\beta/2\}$.  We approximate $f_h(z)=\e^{-hz}$ on $\Sigma_\beta$, with $f_h(\infty):=0$, by rational functions whose only finite pole is a prescribed negative real point, repeated with multiplicity at most the approximation degree. For $q>0$, introduce the M\"obius transformation
\begin{equation}\label{eq:mobius}
  w=\frac{q-hz}{q+hz},
  \qquad
  hz=q\,\frac{1-w}{1+w}.
\end{equation}
The corresponding physical pole is $s=-q/h$, and the transformed target function is $g_q(w):=\exp\!\left(-q(1-w)/(1+w)\right)$.  The complex continuation of $g_q$ has an essential singularity at $w=-1$, but on the compact set introduced below it extends continuously to that point by $g_q(-1)=0$.

\subsection{Exact polynomial--rational correspondence}\label{subsec:isometry}

Define the compactified image
\begin{equation}\label{eq:Kbeta}
  K_\beta
  :=\left\{\frac{1-y}{1+y}:
       y=re^{\ii\theta},\ r\ge0,\ |\theta|\le\frac{\beta}{2}\right\}
     \cup\{-1\}.
\end{equation}
The added point $-1$ is the image of $y=\infty$.  Since a sector is invariant under multiplication by a positive real number, $K_\beta$ depends only on $\beta$, not on $q$ or $h$.

Write $\|f\|_X:=\sup_{z\in X}|f(z)|$ for the uniform norm on a set $X$. Let $\Pm_m$ denote the polynomials of degree at most $m$, and define the prescribed repeated-pole class
\begin{equation}\label{eq:Rmq}
  \Rm_m(q;h)
  :=\left\{p\!\left(\frac{q-hz}{q+hz}\right):p\in\Pm_m\right\}.
\end{equation}

\begin{proposition}[Exact M\"obius isometry]\label{prop:isometry}
For every $m\ge0$, $q>0$, $h>0$, the map $p(w)\mapsto r(z):=p((q-hz)/(q+hz))$ is a bijection from $\Pm_m$ onto $\Rm_m(q;h)$.  Every $r\in\Rm_m(q;h)$ has no finite pole except $z=-q/h$, with multiplicity at most $m$, and
\begin{equation}\label{eq:isometry}
  \sup_{z\in\Sigma_\beta}|\e^{-hz}-r(z)|
  =\sup_{w\in K_\beta}|g_q(w)-p(w)|.
\end{equation}
Consequently, if
\begin{equation}\label{eq:best-error}
  E_{m,\beta}(q)
  :=\inf_{p\in\Pm_m}\|g_q-p\|_{K_\beta},
\end{equation}
then $E_{m,\beta}(q)$ is exactly the best uniform error in the corresponding repeated-pole rational class on $\Sigma_\beta$.
\end{proposition}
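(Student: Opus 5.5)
The plan is to treat the statement as three separate facts about the M\"obius map $\phi(z)=(q-hz)/(q+hz)$: the algebraic description of the class, the bijection, and the equality of sup norms. For the algebraic part, write $p(w)=\sum_{k=0}^m c_k w^k$. Then
\[
  r(z)=\sum_{k=0}^m c_k\frac{(q-hz)^k}{(q+hz)^k}=\frac{P(z)}{(q+hz)^m},
  \qquad
  P(z)=\sum_{k=0}^m c_k (q-hz)^k(q+hz)^{m-k},
\]
with $\deg P\le m$. Hence $r$ is finite at infinity, and its only possible finite pole is $z=-q/h$, of multiplicity at most $m$. For the bijection, surjectivity onto $\Rm_m(q;h)$ holds by definition (eq.~\eqref{eq:Rmq}). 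Injectivity follows because $\phi$ is a nonconstant M\"obius map, so its image is all of $\whC$ minus the single point $\phi(-q/h)$ (namely $\infty$). So if $p_1\circ\phi=p_2\circ\phi$, then $p_1-p_2$ vanishes on $\C\setminus\{-1\}$, where $-1=\phi(\infty)$, and therefore $p_1=p_2$.

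For the norm identity \eqref{eq:isometry}, I would substitute $y=hz/q$, so $w=(1-y)/(1+y)$. Since $q/h>0$, the map $z\mapsto y$ is a bijection of $\Sigma_\beta$ onto itself, so $\phi(\Sigma_\beta)=K_\beta\setminus\{-1\}$. Also $g_q(\phi(z))=\exp(-q\cdot hz/q)=\e^{-hz}$, because $(1-w)/(1+w)=hz/q$. Thus for every $z\in\Sigma_\beta$,
\[
  |\e^{-hz}-r(z)|=|g_q(w)-p(w)|,\qquad w=\phi(z).
\]
Taking suprema gives equality of the sup over $\Sigma_\beta$ with the sup over $K_\beta\setminus\{-1\}$.

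The only point needing care is the added point $w=-1$, and I expect this to be the main obstacle. I would show that $g_q(w)\to0$ as $w\to-1$ within $K_\beta\setminus\{-1\}$. Since $w\to-1$ corresponds to $|y|\to\infty$ with $|\arg y|\le\beta/2<\pi/2$, we have $\operatorname{Re}(qy)\ge q|y|\cos(\beta/2)\to\infty$. Consequently $g_q-p$ extends continuously to $K_\beta$ with value $-p(-1)$ at $w=-1$. This value is a limit of values on $K_\beta\setminus\{-1\}$, so adjoining the point does not increase the supremum. This also matches the convention $f_h(\infty)=0$, $r(\infty)=p(-1)$ on the compactified sector.

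Finally, since the correspondence $p\mapsto r$ is a bijection, taking the infimum over $p\in\Pm_m$ in \eqref{eq:isometry} gives $E_{m,\beta}(q)$ in \eqref{eq:best-error} as exactly the best uniform error over $\Rm_m(q;h)$ on $\Sigma_\beta$. The condition $\beta<\pi$ is used only in the limit at $-1$, where $\cos(\beta/2)>0$ is needed.
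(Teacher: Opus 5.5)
Your proof is correct and follows the paper's own route. The substitution $hz=q(1-w)/(1+w)$ shows that every $p\circ\phi$ has denominator dividing $(q+hz)^m$, and $\phi$ carries $\Sigma_\beta$ onto $K_\beta\setminus\{-1\}$ with $g_q\circ\phi=\e^{-hz}$; your continuity argument at $w=-1$ spells out what the paper compresses into ``the bijection of the compactified sector onto $K_\beta$.''

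The paper also records the converse inclusion: any rational function finite at infinity whose only finite pole is $-q/h$, of order at most $m$, becomes a polynomial of degree at most $m$ in $w$, since $q+hz=2q/(1+w)$. You need this only if the ``corresponding class'' is read as the intrinsic class of the introduction rather than $\Rm_m(q;h)$ by definition.
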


\begin{proof}
The inverse of \eqref{eq:mobius} is $z=(q/h)(1-w)/(1+w)$. A polynomial of degree at most $m$ in $w$ therefore becomes a rational function whose denominator divides $(q+hz)^m$, and the converse follows by the same substitution.  The map \eqref{eq:mobius} is a bijection from the compactified sector $\Sigma_\beta$ onto $K_\beta$, including $z=\infty\leftrightarrow w=-1$.  The norm identity \eqref{eq:isometry} and then \eqref{eq:best-error} follow immediately.
\end{proof}

Thus $h$ disappears from the scalar problem and re-enters only through the physical pole $s=-q/h$.

\subsection{The spindle-shaped compact set}\label{subsec:spindle}

For $\beta>0$, each boundary ray of $\Sigma_\beta$ is mapped by \eqref{eq:mobius} to a circular arc joining $-1$ and $1$.  A direct calculation gives
\begin{equation}\label{eq:lens-disks}
  K_\beta
  =D\!\left(\ii\cot\frac{\beta}{2},\csc\frac{\beta}{2}\right)
   \cap
   D\!\left(-\ii\cot\frac{\beta}{2},\csc\frac{\beta}{2}\right),
\end{equation}
where $D(c,r)$ is the closed disk of center $c$ and radius $r$.  Hence $K_\beta$ is a convex lens, symmetric with respect to both coordinate axes; $K_0=[-1,1]$ and $K_\beta\to\overline{\D}$ as $\beta\uparrow\pi$.

\subsection{Exact exterior map and logarithmic capacity}\label{subsec:exterior-map}

Set $\gamma=\gamma_\beta:=2-\beta/\pi\in(1,2]$.  The complement of the original sector has opening angle \(2\pi-\beta=\pi\gamma\). Let $Y(w):=(1-w)/(1+w)$.  On $\whC\setminus K_\beta$, choose the branch with
\begin{equation}\label{eq:Y-arg}
  \frac{\beta}{2}<\Arg Y(w)<2\pi-\frac{\beta}{2},
\end{equation}
and define
\begin{equation}\label{eq:Ubeta}
  U_\beta(w)
  :=\e^{-\ii\pi/\gamma}Y(w)^{1/\gamma}.
\end{equation}
Then $U_\beta$ maps $\whC\setminus K_\beta$ conformally onto the right half-plane.  The Cayley transform therefore gives the normalized exterior map
\begin{equation}\label{eq:Phi}
  \Phi_\beta(w)
  :=\frac{1+U_\beta(w)}{1-U_\beta(w)}
\end{equation}
from $\whC\setminus K_\beta$ onto $\whC\setminus\overline\D$, with $\Phi_\beta(\infty)=\infty$.

\begin{proposition}[Exterior normalization and capacity]\label{prop:capacity}
As $w\to\infty$,
\begin{equation}\label{eq:Phi-expansion}
  \Phi_\beta(w)
  =\gamma w+\frac{1-\gamma^2}{3\gamma}\,w^{-1}
   +O(w^{-3}).
\end{equation}
Consequently,
\begin{equation}\label{eq:capacity}
  \capc(K_\beta)=\frac1\gamma=\frac{\pi}{2\pi-\beta}.
\end{equation}
\end{proposition}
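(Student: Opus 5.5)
The plan is to expand $\Phi_\beta$ directly in the local variable $t=1/w$ near $w=\infty$. The capacity then follows from the standard normalization of the exterior conformal map: the leading coefficient of a conformal map of $\whC\setminus K_\beta$ onto $\whC\setminus\overline\D$ with $\Phi_\beta(\infty)=\infty$ equals $1/\capc(K_\beta)$. The preceding discussion already gives that $\Phi_\beta$ is such a map: $U_\beta$ maps onto the right half-plane and the Cayley transform then maps onto the exterior of the disk. So only the expansion \eqref{eq:Phi-expansion} needs computing.

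\emph{Step 1: resolve the branch at infinity.} As $w\to\infty$ we have $Y(w)\to-1$, and $\pi$ lies in the interval $(\beta/2,\,2\pi-\beta/2)$ of \eqref{eq:Y-arg}. Hence, for $|w|$ large, the chosen branch is
\[
Y(w)^{1/\gamma}=\e^{\ii\pi/\gamma}\,T(t)^{1/\gamma},
\qquad
T(t)=\frac{1-t}{1+t},
\]
with the principal power of $T$ near $1$. By \eqref{eq:Ubeta} this gives
\[
U_\beta(w)=T(t)^{1/\gamma}=\exp\bigl(-x(t)\bigr)^2,
\qquad
x(t):=\frac{1}{\gamma}\operatorname{artanh}(t),
\]
using $\log T(t)=-2\operatorname{artanh}(t)$. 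This is the only point requiring care: one must check that the phase $\e^{-\ii\pi/\gamma}$ exactly cancels the branch factor, so that $U_\beta\to1$ and $\Phi_\beta\to\infty$.

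\emph{Step 2: Cayley transform as a hyperbolic cotangent.} With $U=\e^{-2x}$,
\[
\Phi_\beta=\frac{1+U}{1-U}=\coth x(t).
\]
Since $x$ is an odd analytic function of $t$ and $\coth$ is odd with a simple pole at $0$, $\Phi_\beta$ is an odd Laurent series in $t$. This parity is what makes the remainder $O(t^3)$ rather than $O(t^2)$.

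\emph{Step 3: expand.} Using $\coth x=x^{-1}+x/3+O(x^3)$ and
\[
x=\frac{1}{\gamma}\Bigl(t+\frac{t^3}{3}+O(t^5)\Bigr),
\]
we get $x^{-1}=\gamma t^{-1}-\gamma t/3+O(t^3)$ and $x/3=t/(3\gamma)+O(t^3)$. Adding,
\[
\Phi_\beta=\gamma w+\Bigl(\frac{1}{3\gamma}-\frac{\gamma}{3}\Bigr)w^{-1}+O(w^{-3}),
\]
which is \eqref{eq:Phi-expansion}. Comparing the leading coefficient with $1/\capc(K_\beta)$ gives
\[
\capc(K_\beta)=\frac1\gamma=\frac{\pi}{2\pi-\beta}.
\]

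As a sanity check, at $\beta=0$ we have $\gamma=2$, and the formula gives $\Phi=2w-\tfrac12w^{-1}+\dots$ and capacity $\tfrac12$, matching the map $w+\sqrt{w^2-1}$ for $[-1,1]$.
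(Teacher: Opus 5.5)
Your proposal is correct and follows the same route as the paper. You resolve the branch at $w=\infty$ so that $U_\beta(w)=((1-t)/(1+t))^{1/\gamma}$ with $t=1/w$, expand the Cayley transform, and read off $1/\capc(K_\beta)$ from the leading coefficient. Your rewriting $\Phi_\beta=\coth\bigl(\gamma^{-1}\operatorname{artanh}t\bigr)$ is a clean way to expose the odd parity, which gives the $O(w^{-3})$ remainder, and the $w^{-1}$ coefficient; the paper leaves both of these details implicit.
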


\begin{proof}
Writing $x=w^{-1}$, the branch choice \eqref{eq:Y-arg} gives $U_\beta(w)=((1-x)/(1+x))^{1/\gamma}$. Expanding at $x=0$ and substituting into \eqref{eq:Phi} gives \eqref{eq:Phi-expansion}. For the exterior map normalized by $\Phi_\beta(\infty)=\infty$ and $\Phi_\beta'(\infty)>0$, the coefficient of $w$ is the reciprocal logarithmic capacity.  This proves \eqref{eq:capacity}.
\end{proof}

In particular, $\capc(K_0)=1/2$ and $\capc(K_\beta)\uparrow1$ as $\beta\uparrow\pi$.  The capacity describes the fixed lens but not the diagonal approximation rate.  This geometry is known: in \citet[Sec.~5]{Novati2011}, identify his angle parameter with $\theta=\beta/2$, set $\nu=2\theta/\pi$, and apply the affine change from his lens variable to ours.  Then $2-\nu=\gamma$, his inverse exterior map becomes \eqref{eq:Psi}, and the capacity scales from $1/[2(2-\nu)]$ to $1/\gamma$.  We include the derivation because the present normalization leads directly to \eqref{eq:g-Psi}.

\subsection{Inverse map and the Faber coordinate}\label{subsec:inverse-map}

Let $\xi=\Phi_\beta(w)$ and $\zeta:=(\xi-1)/(\xi+1)$.  For $|\xi|>1$, one has $\Re\zeta>0$.  Solving \eqref{eq:Phi} for $w$ gives the exterior inverse map
\begin{equation}\label{eq:Psi}
  \Psi_\beta(\xi):=\Phi_\beta^{-1}(\xi)
  =\frac{1+\zeta^\gamma}{1-\zeta^\gamma},
  \qquad
  \zeta=\frac{\xi-1}{\xi+1},
\end{equation}
where $\zeta^\gamma$ is taken on the branch $-\pi/2<\arg\zeta<\pi/2$; moreover $\Psi_\beta(\xi)=\xi/\gamma+O(\xi^{-1})$ as $\xi\to\infty$.  The identity
\begin{equation}\label{eq:key-Psi-identity}
  \frac{1-\Psi_\beta(\xi)}{1+\Psi_\beta(\xi)}=-\zeta^\gamma
\end{equation}
then yields the particularly simple exterior representation
\begin{equation}\label{eq:g-Psi}
  g_q(\Psi_\beta(\xi))=\e^{q\zeta^\gamma}.
\end{equation}

\begin{remark}[Recovery of the interval geometry]\label{rem:joukowski}
If $\beta=0$, then $\gamma=2$ and \eqref{eq:Psi} reduces to \( \Psi_0(\xi)=\frac12(\xi+\xi^{-1})\), the classical Joukowski map.  Thus the sectorial construction reduces exactly to the Chebyshev geometry on $[-1,1]$.
\end{remark}

\subsection{Faber polynomials}\label{subsec:faber-polys}

Let $F_n^{(\beta)}$ be the $n$th Faber polynomial associated with $K_\beta$, namely the polynomial part of the Laurent expansion of $\Phi_\beta(w)^n$ at infinity.  We suppress the superscript when the angle is fixed.  Since $K_\beta$ is convex, the standard Faber bound gives
\begin{equation}\label{eq:faber-bound}
  \|F_n\|_{K_\beta}\le2,\qquad n\ge1;
\end{equation}
see, for example, \citep[Ch.~I]{Suetin1998}.  For $\beta=0$, $F_n(w)=2T_n(w)$ for $n\ge1$, where $T_n$ is the Chebyshev polynomial of the first kind, so the present Faber basis reduces to the Chebyshev basis of the half-line approximation problem.

\section{Faber coefficients and fixed-ratio saddle asymptotics}\label{sec:coefficients}

We now come to the main asymptotic calculation.  The target varies with the degree, so the Faber coefficient index and the approximation degree must be kept separate.  We use $n$ for a coefficient and $m$ for the final approximation.  If $q=\alpha m$, then the omitted coefficient of index $m+j$ has effective ratio $q/(m+j)=\alpha m/(m+j)$, which differs from $\alpha$ by $O(m^{-1})$ for fixed $j$.

For the saddle analysis itself we therefore study the diagonal linear regime $q=\alpha n$, with $\alpha$ in a compact subinterval of the noncoalescing range; no uniform analysis as $\alpha\downarrow0$ is needed.

\subsection{An exact coefficient integral}\label{subsec:coeff-integral}

The starting point is an exact boundary coefficient, not an exterior level-curve coefficient.  This distinction matters because the transformed exponential has an essential singularity at the boundary point corresponding to infinity in the original sector.

For $n\ge1$, define the boundary Faber coefficient functional of $g_q$ by
\begin{equation}\label{eq:faber-coeff-xi}
  a_n(q)
  :=\frac{1}{2\pi\ii}\int_{|\xi|=1}
  g_q(\Psi_\beta(\xi))\,\xi^{-n-1}\,d\xi.
\end{equation}
The integral is understood as the boundary limit at $\xi=-1$.  It is absolutely convergent there because the real part of the exponent in \eqref{eq:g-Psi} tends to $-\infty$ along the unit circle.  For a polynomial $p$ of degree smaller than $n$, the corresponding functional vanishes, since $p(\Psi_\beta(\xi))$ has no $\xi^n$ term in its Laurent expansion at infinity.

\begin{remark}[Boundary rather than exterior coefficient]\label{rem:boundary-coeff}
The definition \eqref{eq:faber-coeff-xi} is a boundary Faber coefficient because $g_q$ is singular at $w=-1$ ($\xi=-1$).  Although $g_q\circ\Psi_\beta$ is analytic for $|\xi|>1$, its behavior near $\xi=-1$ is not uniformly controlled as $|\xi|\downarrow1$, so \eqref{eq:faber-coeff-xi} cannot be replaced by an exterior level-curve integral moved to infinity.  For polynomials the exterior Laurent expansion is regular to the required order, hence the functional annihilates $\Pm_{n-1}$.  This is the sectorial analogue of a Fourier--Chebyshev coefficient at a flat endpoint.
\end{remark}

\begin{lemma}[Exact imaginary-axis representation]\label{lem:imag-axis}
For every $q>0$ and integer $n\ge1$,
\begin{equation}\label{eq:coeff-exact}
  a_n(q)
  =\frac{1}{\pi\ii}
  \int_{-\ii\infty}^{\ii\infty}
  \e^{q\zeta^\gamma}
  \frac{(1-\zeta)^{n-1}}{(1+\zeta)^{n+1}}\,d\zeta.
\end{equation}
Equivalently, locally away from $\zeta=\pm1$,
\begin{equation}\label{eq:coeff-phase}
  a_n(q)
  =\frac{1}{\pi\ii}\int_{\ii\R}
  \frac{1}{1-\zeta^2}
  \exp\!\left\{
  q\zeta^\gamma+n\log\frac{1-\zeta}{1+\zeta}
  \right\}d\zeta.
\end{equation}
Here $\zeta^\gamma$ on the imaginary axis is the boundary value from the right half-plane.
\end{lemma}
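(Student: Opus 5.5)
The plan is a direct change of variables in \eqref{eq:faber-coeff-xi} using the Cayley coordinate $\zeta=(\xi-1)/(\xi+1)$ from \eqref{eq:Psi}. Inverting gives $\xi=(1+\zeta)/(1-\zeta)$, and this map sends the unit circle $|\xi|=1$ onto the imaginary axis $\Re\zeta=0$. Under it, $\xi=1\mapsto\zeta=0$, $\xi=-1\mapsto\zeta=\infty$, and the exterior $|\xi|>1$ goes to $\Re\zeta>0$. By \eqref{eq:g-Psi} the integrand factor $g_q(\Psi_\beta(\xi))$ becomes $\e^{q\zeta^\gamma}$, with $\zeta^\gamma$ on the principal branch. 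On the imaginary axis we take its boundary value from the right half-plane, which agrees with the boundary limit of $g_q\circ\Psi_\beta$ as $|\xi|\downarrow1$ away from $\xi=-1$.

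First compute $d\xi=2(1-\zeta)^{-2}\,d\zeta$ and $\xi^{-n-1}=(1-\zeta)^{n+1}(1+\zeta)^{-n-1}$. Multiplying gives
\[
\xi^{-n-1}\,d\xi=\frac{2(1-\zeta)^{n-1}}{(1+\zeta)^{n+1}}\,d\zeta.
\]
The factor $2$ combines with $1/(2\pi\ii)$ to give $1/(\pi\ii)$.

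Next check orientation. Write $\xi=\e^{\ii t}$; then $\zeta=\ii\tan(t/2)$. As $t$ runs over $(-\pi,\pi)$ counterclockwise, $\zeta$ runs from $-\ii\infty$ to $\ii\infty$, so the orientation is as stated in \eqref{eq:coeff-exact}. The two endpoints $t\to\pm\pi$ correspond to $\xi\to-1$. So the improper integral over $\ii\R$ is exactly the boundary-limit interpretation of \eqref{eq:faber-coeff-xi} at $\xi=-1$. Form \eqref{eq:coeff-phase} is then just the rewriting
\[
\frac{(1-\zeta)^{n-1}}{(1+\zeta)^{n+1}}=\frac{1}{1-\zeta^2}\left(\frac{1-\zeta}{1+\zeta}\right)^n,
\]
with the logarithm defined locally away from $\zeta=\pm1$. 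On $\ii\R$ one has $|(1-\zeta)/(1+\zeta)|=1$, so $\log((1-\zeta)/(1+\zeta))$ is purely imaginary and continuous there.

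The only real point to verify is convergence at $\zeta=\pm\ii\infty$, which I expect to be the main obstacle, although it is mild. Put $\zeta=\pm\ii y$ with $y>0$. The boundary value of $\zeta^\gamma$ is $y^\gamma\e^{\pm\ii\pi\gamma/2}$, so $\Re\zeta^\gamma=y^\gamma\cos(\pi\gamma/2)$. Since $\gamma\in(1,2]$, we have $\cos(\pi\gamma/2)<0$, and $|\e^{q\zeta^\gamma}|=\exp(-q|\cos(\pi\gamma/2)|\,y^\gamma)$ decays superexponentially. Meanwhile $|(1-\zeta)^{n-1}/(1+\zeta)^{n+1}|=|1+\zeta|^{-2}=(1+y^2)^{-1}$. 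So the integral converges absolutely, uniformly justifying the substitution on truncated arcs and the passage to the limit, consistent with the absolute convergence claimed after \eqref{eq:faber-coeff-xi}.

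At $\gamma=2$ (that is, $\beta=0$) the decay still holds: $\Re\zeta^2=-y^2$. No other singularity lies on the contour, since $\zeta=-1$ is off the imaginary axis, which completes the argument.
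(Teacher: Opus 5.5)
Your proposal is correct and follows essentially the same route as the paper's proof. Both use the Cayley substitution $\zeta=(\xi-1)/(\xi+1)$, the identity $\xi^{-n-1}d\xi=2(1-\zeta)^{n-1}(1+\zeta)^{-n-1}d\zeta$ combined with \eqref{eq:g-Psi}, the orientation check, and the decay of $\e^{q\zeta^\gamma}$ on $\ii\R$ via $\cos(\pi\gamma/2)=-\cos(\beta/2)<0$. Your added observation is a small plus: on the axis the rational factor has modulus exactly $(1+y^2)^{-1}$, so absolute convergence holds even without the exponential decay.
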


\begin{proof}
On $|\xi|=1$, set $\zeta=(\xi-1)/(\xi+1)$, so $\xi=(1+\zeta)/(1-\zeta)$ and $d\xi=2(1-\zeta)^{-2}d\zeta$. The positively oriented unit circle maps onto $\ii\R$, from $-\ii\infty$ to $\ii\infty$. Formula \eqref{eq:g-Psi} gives $g_q(\Psi_\beta(\xi))=\e^{q\zeta^\gamma}$, while $\xi^{-n-1}d\xi=2(1-\zeta)^{n-1}(1+\zeta)^{-n-1}d\zeta$. This proves \eqref{eq:coeff-exact}; factoring the rational term gives \eqref{eq:coeff-phase}. Finally, since $\gamma=2-\beta/\pi$, $\Re((\ii y)^\gamma)=-|y|^\gamma\cos(\beta/2)$, so the exponential factor decays at both ends for every $0\le\beta<\pi$.
\end{proof}

\begin{remark}[Half-line normalization]\label{rem:halfline-coeff}
For $\beta=0$, $F_n=2T_n$, so $a_n$ is half the usual Chebyshev coefficient and \eqref{eq:coeff-exact} is the standard transformed half-line coefficient integral.
\end{remark}

\subsection{The diagonal phase and its saddles}\label{subsec:saddles}

When $q$ is proportional to the coefficient index, $n$ is the large parameter and all pole dependence sits in a fixed phase. The saddle equation generalizes Andersson's cubic.

Set $q=\alpha n$.  Then \eqref{eq:coeff-phase} takes the phase--amplitude form
\begin{equation}\label{eq:phase-form}
  a_n(\alpha n)
  =\frac{1}{\pi\ii}\int_{\ii\R}
  A(\zeta)\e^{n\psi_{\alpha,\gamma}(\zeta)}\,d\zeta,
\end{equation}
where
\begin{equation}
  A(\zeta)=\frac{1}{1-\zeta^2}, \quad\mbox{ and }\quad \label{eq:phase}
  \psi_{\alpha,\gamma}(\zeta)
  :=\alpha\zeta^\gamma
  +\log\frac{1-\zeta}{1+\zeta}.
\end{equation}
Near the first-quadrant saddle, $\zeta^\gamma$ and the logarithms in $\psi_{\alpha,\gamma}$ are taken on their principal branches, equivalently $\log((1-\zeta)/(1+\zeta))=\log(1-\zeta)-\log(1+\zeta)$ there; near the lower saddle we use the conjugate branch.  These choices agree with the upper- and lower-half-plane phases used in the global deformation below.  The logarithmic phase is used only locally away from $\zeta=\pm1$; global deformations use the exact integrand \eqref{eq:coeff-exact}.

The saddle equation is
\begin{equation}\label{eq:saddle-eq}
  \alpha\gamma\zeta^{\gamma-1}(1-\zeta^2)=2.
\end{equation}
For $\gamma=2$ this becomes $\alpha(\zeta^3-\zeta)+1=0$, exactly the cubic saddle equation of the half-line problem.

\subsection{Parametrization and the coalescence threshold}\label{subsec:parametrization}

A polar parametrization makes the saddle motion one-dimensional.  It also shows exactly where the conjugate pair coalesces and therefore where the ordinary Gaussian saddle expansion must stop.  The point of the calculation below is not merely to solve the saddle equation explicitly.  The angle $\theta$ will serve as a global parameter for the relevant saddle branch: $\theta\downarrow0$ corresponds to the collision of the conjugate pair on the positive real axis, while $\theta\uparrow\pi/(\gamma+1)$ drives the pole ratio toward zero.  Thus monotonicity of $\alpha_\gamma(\theta)$ simultaneously identifies the subcritical branch and its endpoint.

Write a first-quadrant saddle as $\zeta=r\e^{\ii\theta}$ with $r>0$ and $0<\theta<\pi/(\gamma+1)$.  The imaginary part of \eqref{eq:saddle-eq} gives
\begin{equation}\label{eq:rtheta}
  r^2=\frac{\sin((\gamma-1)\theta)}
             {\sin((\gamma+1)\theta)}.
\end{equation}
Moreover, $1-r^2\e^{2\ii\theta}=[\sin(2\theta)/\sin((\gamma+1)\theta)]\e^{-\ii(\gamma-1)\theta}$.  Taking moduli in \eqref{eq:saddle-eq} therefore yields
\begin{equation}\label{eq:alpha-theta}
  \alpha_\gamma(\theta)
  =\frac{2}{\gamma}
  \frac{\sin((\gamma+1)\theta)^{(\gamma+1)/2}}
       {\sin(2\theta)\,
        \sin((\gamma-1)\theta)^{(\gamma-1)/2}}.
\end{equation}

\begin{lemma}[Monotone saddle parametrization]\label{lem:alpha-monotone}
The function $\theta\mapsto\alpha_\gamma(\theta)$ is strictly decreasing on $0<\theta<\pi/(\gamma+1)$ and maps this interval bijectively onto $(0,\alpha_c(\gamma))$, where
\begin{equation}\label{eq:alpha-c}
  \alpha_c(\gamma)
  :=\frac1\gamma
  \frac{(\gamma+1)^{(\gamma+1)/2}}
       {(\gamma-1)^{(\gamma-1)/2}}.
\end{equation}
\end{lemma}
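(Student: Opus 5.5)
The plan is to work with $\log\alpha_\gamma(\theta)$ and show its derivative is negative on $(0,\pi/(\gamma+1))$, with $\gamma\in(1,2]$ fixed. Put $a=\gamma+1$ and $b=\gamma-1$, so $a>b>0$ and $a-b=2$. Differentiating \eqref{eq:alpha-theta} gives
\begin{equation*}
  \frac{d}{d\theta}\log\alpha_\gamma(\theta)
  =\frac{a^2}{2}\cot(a\theta)-2\cot(2\theta)-\frac{b^2}{2}\cot(b\theta).
\end{equation*}
On the interval we have $a\theta<\pi$, $2\theta<2\pi/(\gamma+1)<\pi$ and $b\theta<\pi$, so every sine in \eqref{eq:alpha-theta} is positive and the formula is smooth there.

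To prove the sign, I would introduce $\phi(c):=c\cot(c\theta)$ for $c\theta\in(0,\pi)$. The derivative above equals $\tfrac12[a\phi(a)-b\phi(b)]-\phi(2)$. Since $2=a-b$, this can be written as $\tfrac12\bigl[a(\phi(a)-\phi(2))+b(\phi(2)-\phi(b))\bigr]$. The function $c\mapsto c\cot(c\theta)$ is strictly decreasing on $(0,\pi/\theta)$, because $\frac{d}{dc}[c\cot(c\theta)]=\cot(c\theta)-c\theta\csc^2(c\theta)=\bigl(\sin(2c\theta)/2-c\theta\bigr)/\sin^2(c\theta)<0$. Hence $\phi(a)<\phi(2)$ always, so the first bracketed term is negative. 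The second term has the sign of $2-b$ reversed: it is negative when $b>2$ and nonnegative when $b\le2$. Since $b=\gamma-1\le1<2$, it is in fact positive, so this direct grouping does not settle the sign.

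The main obstacle is therefore this inequality, and I would look for a better convexity argument. Writing $\chi(c):=c\phi(c)=c^2\cot(c\theta)$, the derivative is $\tfrac12[\chi(a)-\chi(b)]-\chi(2)/2$, using $\chi(2)/2=2\cot(2\theta)$. So the claim is $\chi(a)-\chi(b)<\chi(2)=\chi(a-b)$ with $\chi(0)=0$, which is a superadditivity statement: $\chi(b)+\chi(a-b)>\chi(a)$. It would follow if $\chi'$ were decreasing, that is, if $\chi$ were concave on $[0,a]$; one could then use $\chi(b)+\chi(a-b)\ge\chi(a)+\chi(0)$. I would first try to verify that $\chi''<0$ on $(0,\pi/\theta)$. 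In the variable $x=c\theta$ this reduces, after scaling by $\theta^{-2}$, to showing that $x^2\cot x$ is concave on $(0,\pi)$. Near $0$ we have $x^2\cot x=x-x^3/3-\dots$, which is concave, and near $\pi$ the function tends to $-\infty$ in a concave manner. A full proof needs an elementary inequality such as $\frac{d^2}{dx^2}(x^2\cot x)=2\cot x-4x\csc^2x+2x^2\csc^2x\cot x<0$, which I would check by multiplying through by $\sin^3x$ and analyzing a single trigonometric function. If concavity fails somewhere, the fallback is to exploit $b\le1$ and bound the terms directly.

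For the endpoints, as $\theta\downarrow0$ use $\sin(c\theta)\sim c\theta$. The powers of $\theta$ combine as $\theta^{(\gamma+1)/2-1-(\gamma-1)/2}=\theta^0$, which gives $\alpha_\gamma\to\frac{2}{\gamma}\frac{(\gamma+1)^{(\gamma+1)/2}}{2(\gamma-1)^{(\gamma-1)/2}}=\alpha_c(\gamma)$. As $\theta\uparrow\pi/(\gamma+1)$, we have $\sin((\gamma+1)\theta)\to0^+$ while the denominator stays positive, so $\alpha_\gamma\to0$. Continuity and strict monotonicity then give the bijection onto $(0,\alpha_c(\gamma))$. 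When $\gamma=2$ ($\beta=0$) this recovers Andersson's threshold $\alpha_c=3\sqrt3/2$.
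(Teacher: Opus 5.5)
\textbf{Gap: the strict decrease of $\alpha_\gamma$, which is the heart of the lemma, is never established, and the cause is a sign slip.} Since $\phi(c)=c\cot(c\theta)$ is strictly decreasing in $c$, the difference $\phi(2)-\phi(b)$ has the sign of $b-2$. With $b=\gamma-1\le1<2$ this gives $\phi(b)>\phi(2)$, so $b(\phi(2)-\phi(b))<0$, not $>0$. Your sentence ``negative when $b>2$ and nonnegative when $b\le2$'' has the two cases interchanged. Hence in your own identity
\[
  \frac{d}{d\theta}\log\alpha_\gamma(\theta)
  =\tfrac12\bigl[a(\phi(a)-\phi(2))+b(\phi(2)-\phi(b))\bigr]
\]
both terms are strictly negative, and monotonicity follows at once. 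As written, however, you abandon this argument. You replace it with a concavity argument whose decisive inequality is only announced (``I would check\ldots'', with a fallback ``if concavity fails''). So the proposal leaves the lemma unproved.

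With the sign corrected, your argument is essentially the paper's. The paper writes $\alpha_\gamma(\theta)=2\sin((\gamma+1)\theta)/[\gamma\,r(\theta)^{\gamma-1}\sin 2\theta]$ with $r^2=\sin((\gamma-1)\theta)/\sin((\gamma+1)\theta)$. It then uses that $\sin(a'\theta)/\sin(b'\theta)$ increases for $0<a'<b'$, because $x\cot x$ decreases on $(0,\pi)$, and applies this to the pairs $(\gamma-1,\gamma+1)$ and $(2,\gamma+1)$. In your notation this is the regrouping $[\phi(a)-\phi(2)]+\tfrac b2[\phi(a)-\phi(b)]$, which is negative because $a$ exceeds both $2$ and $b$; it does not even need $b<2$.

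Your concavity route can also be completed. For $g(x)=x^2\cot x$ one has $\tfrac12\sin^3x\,g''(x)=k(x):=\cos x\sin^2x-2x\sin x+x^2\cos x$, with $k(0)=0$ and $k'(x)=-\sin x\,(3\sin^2x+x^2)<0$. So $g$ is strictly concave on $(0,\pi)$, hence so is $\chi(c)=\theta^{-2}g(c\theta)$ on $[0,\pi/\theta)$. Then $\chi(b)+\chi(2)>\chi(a)+\chi(0)=\chi(a)$; note this is strict subadditivity, not superadditivity.

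Your endpoint limits and the continuity-plus-monotonicity bijection argument are correct.
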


\begin{proof}
For $0<a<b$ and $0<\theta<\pi/b$, the ratio $\sin(a\theta)/\sin(b\theta)$ is strictly increasing: its logarithmic derivative is $a\cot(a\theta)-b\cot(b\theta)>0$ because $x\cot x$ decreases on $(0,\pi)$. Applying this first with $(a,b)=(\gamma-1,\gamma+1)$ shows that $r^2$ in \eqref{eq:rtheta} increases, and then with $(a,b)=(2,\gamma+1)$ shows that $\sin(2\theta)/\sin((\gamma+1)\theta)$ increases.  From the modulus form of the saddle equation, $\alpha_\gamma(\theta)=2\sin((\gamma+1)\theta)/[\gamma r(\theta)^{\gamma-1}\sin(2\theta)]$, so $\alpha_\gamma$ is strictly decreasing.

As $\theta\downarrow0$, \eqref{eq:alpha-theta} gives \eqref{eq:alpha-c}.  As $\theta\uparrow\pi/(\gamma+1)$, the numerator in \eqref{eq:alpha-theta} tends to zero while the remaining factors have the corresponding positive powers needed to give $\alpha_\gamma(\theta)\to0$.  Strict monotonicity proves the bijection.
\end{proof}

\begin{lemma}[Relevant saddle pair]\label{lem:saddle-pair}
For every $0<\alpha<\alpha_c(\gamma)$, equation \eqref{eq:saddle-eq} has a unique saddle $\zeta_{\alpha,\gamma}$ in the first quadrant and its conjugate $\overline{\zeta_{\alpha,\gamma}}$ in the fourth quadrant.  The first-quadrant saddle is characterized by \eqref{eq:rtheta}--\eqref{eq:alpha-theta}.  The pair coalesces on the positive real axis at $\alpha=\alpha_c(\gamma)$, at
\begin{equation}\label{eq:zeta-c}
  \zeta_c=\sqrt{\frac{\gamma-1}{\gamma+1}}.
\end{equation}
Furthermore,
\begin{equation}\label{eq:second-derivative}
  \psi''_{\alpha,\gamma}(\zeta_{\alpha,\gamma})
  =\frac{2\big[(\gamma-1)-(\gamma+1)\zeta_{\alpha,\gamma}^2\big]}
  {\zeta_{\alpha,\gamma}(1-\zeta_{\alpha,\gamma}^2)^2},
\end{equation}
so the saddle is nondegenerate throughout $0<\alpha<\alpha_c(\gamma)$.
\end{lemma}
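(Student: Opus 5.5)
Write the first-quadrant candidate as $\zeta=r\e^{\ii\theta}$ with $r>0$ and $0<\theta<\pi/2$; we first show that any saddle there must have $0<\theta<\pi/(\gamma+1)$. Write \eqref{eq:saddle-eq} as $\zeta^{\gamma-1}(1-\zeta^2)=2/(\alpha\gamma)>0$. Its argument condition reads $(\gamma-1)\theta+\arg(1-r^2\e^{2\ii\theta})\equiv0$. Since $\Im(1-\zeta^2)=-r^2\sin2\theta<0$, we have $\arg(1-\zeta^2)\in(-\pi,0)$. Hence $(\gamma-1)\theta=-\arg(1-\zeta^2)\in(0,\pi)$, with no other branch possible because $(\gamma-1)\theta<\pi/2$.

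Setting the imaginary part of $\zeta^{\gamma-1}(1-\zeta^2)$ to zero gives
\[
\sin((\gamma-1)\theta)=r^2\sin((\gamma+1)\theta).
\]
Positivity of the real part then forces $\sin((\gamma+1)\theta)>0$, hence $\theta<\pi/(\gamma+1)$, and $r$ is given by \eqref{eq:rtheta}. Conversely, for each $\theta$ in that interval, \eqref{eq:rtheta} defines $r(\theta)$. The paper's identity for $1-r^2\e^{2\ii\theta}$ shows that $\zeta^{\gamma-1}(1-\zeta^2)$ is then real and positive, with modulus $2/(\alpha_\gamma(\theta)\gamma)$.

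The first-quadrant saddles are therefore exactly the points $r(\theta)\e^{\ii\theta}$ with $\alpha_\gamma(\theta)=\alpha$. By Lemma~\ref{lem:alpha-monotone}, for each $0<\alpha<\alpha_c(\gamma)$ there is exactly one such $\theta$, which gives existence and uniqueness of $\zeta_{\alpha,\gamma}$. Real coefficients and the conjugate branch give $\overline{\zeta_{\alpha,\gamma}}$ in the fourth quadrant.

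For coalescence, let $\theta\downarrow0$ in \eqref{eq:rtheta}; then $r^2\to(\gamma-1)/(\gamma+1)$, which is \eqref{eq:zeta-c}. Lemma~\ref{lem:alpha-monotone} identifies the corresponding limit of $\alpha$ as $\alpha_c(\gamma)$, so both conjugate saddles tend to $\zeta_c$ on the positive axis. As a check, on the real axis the function $\zeta^{\gamma-1}(1-\zeta^2)$ attains its maximum on $(0,1)$ exactly at $\zeta_c$, which is where the double root appears.

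For \eqref{eq:second-derivative}, differentiate $\psi'=\alpha\gamma\zeta^{\gamma-1}-2/(1-\zeta^2)$ to get
\[
\psi''=\alpha\gamma(\gamma-1)\zeta^{\gamma-2}-\frac{4\zeta}{(1-\zeta^2)^2}.
\]
At the saddle, substitute $\alpha\gamma\zeta^{\gamma-2}=2/(\zeta(1-\zeta^2))$. This gives
\[
\psi''=\frac{2(\gamma-1)(1-\zeta^2)-4\zeta^2}{\zeta(1-\zeta^2)^2},
\]
which simplifies to the stated formula.

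Nondegeneracy requires $(\gamma+1)\zeta^2\neq\gamma-1$, i.e.\ $\zeta^2\neq\zeta_c^2$. For $\theta\in(0,\pi/(\gamma+1))$, $\zeta^2=r^2\e^{2\ii\theta}$ has nonzero argument $2\theta<\pi$ and so is nonreal, while $\zeta_c^2$ is real. Hence the saddle is nondegenerate throughout $0<\alpha<\alpha_c(\gamma)$.

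The main obstacle is the argument-branch bookkeeping in the first step. It must rule out saddles with $\theta\ge\pi/(\gamma+1)$, as well as solutions where $\zeta^{\gamma-1}(1-\zeta^2)$ equals a positive number through a different determination of the argument. The bound $\arg(1-\zeta^2)\in(-\pi,0)$ handles this.
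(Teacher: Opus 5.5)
Your argument is correct and follows essentially the paper's route in the first quadrant. The imaginary-part identity forces $0<\theta<\pi/(\gamma+1)$ and fixes $r$ by \eqref{eq:rtheta}, Lemma~\ref{lem:alpha-monotone} then gives existence and uniqueness, the limit $\theta\downarrow0$ gives $\zeta_c$, and eliminating $\alpha$ gives \eqref{eq:second-derivative}. You are more explicit than the paper on two points: the converse direction (every point $r(\theta)\e^{\ii\theta}$ really is a solution) and nondegeneracy (since $\zeta^2\notin\R$ while $\zeta_c^2\in\R$).

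The one difference is scope. The paper's proof also excludes saddles in two further places:
\begin{itemize}
\item on the positive imaginary axis, where the left-hand side of \eqref{eq:saddle-eq} has argument $(\gamma-1)\pi/2\ne0$;
\item in the second quadrant, where $\arg(1-r^2\e^{2\ii\theta})$ runs through $(0,2\theta-\pi)$ as $r$ increases, so the total argument stays strictly between $0$ and $(\gamma+1)\theta-\pi<2\pi$.
\end{itemize}
This gives uniqueness of the critical point in the whole open upper half-plane. That stronger fact is what Lemma~\ref{lem:thimble-topology} later invokes for $\psi^+_{\alpha,\gamma}$. Your proof covers the statement as literally phrased. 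If the lemma is read as saying these are the only saddles, you need to add that step.
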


\begin{proof}
There are two points to check.  The parametrization already gives one saddle in the first quadrant; what remains is to exclude any additional upper-half-plane solutions and then to identify the only possible loss of nondegeneracy. Lemma~\ref{lem:alpha-monotone} gives existence and uniqueness for $0<\theta<\pi/(\gamma+1)$.  No first-quadrant solution can occur for $\pi/(\gamma+1)\le\theta<\pi/2$, where the identity leading to \eqref{eq:rtheta} would require $r^2=\sin((\gamma-1)\theta)/\sin((\gamma+1)\theta)\le0$, whereas $r>0$.  At $\theta=\pi/2$, the left-hand side of \eqref{eq:saddle-eq} has argument $(\gamma-1)\pi/2\ne0$, so the imaginary axis contains no saddle. There is also no additional upper-half-plane solution with $\pi/2<\theta<\pi$.  Indeed, write $\zeta=r\e^{\ii\theta}$ in that range.  As $r$ increases from $0$ to infinity, the principal argument of $1-r^2\e^{2\ii\theta}$ increases from $0$ to $2\theta-\pi$.  Hence the argument of $\zeta^{\gamma-1}(1-\zeta^2)$ stays strictly between $0$ and $(\gamma+1)\theta-\pi<2\pi$ and can never be congruent to $0$ modulo $2\pi$.  Thus the upper saddle is unique, and conjugation gives the unique lower saddle.  At coalescence, the limiting angle is $\theta=0$, and \eqref{eq:rtheta} yields \eqref{eq:zeta-c}.  Finally,
\[
  \psi''_{\alpha,\gamma}(\zeta)
  =\alpha\gamma(\gamma-1)\zeta^{\gamma-2}
    -\frac{4\zeta}{(1-\zeta^2)^2}.
\]
Eliminating $\alpha$ by \eqref{eq:saddle-eq} gives \eqref{eq:second-derivative}.  Its numerator vanishes precisely when $\zeta^2=(\gamma-1)/(\gamma+1)$, i.e., at the coalescence point.
\end{proof}

For $\gamma=2$, \eqref{eq:alpha-c} gives $\alpha_c(2)=3\sqrt3/2$, in agreement with the half-line cubic.

\subsection{The sectorial rate and saddle phase}\label{subsec:rate-function}

The real part of the saddle action gives the geometric decay, while its imaginary part controls coefficient oscillation.  We name both quantities now because the first will become the best-error root factor and the second will be used later to prevent cancellation.

Let $\zeta_{\alpha,\gamma}$ denote the first-quadrant saddle and define $H_\beta(\alpha):=\exp(\Re\psi_{\alpha,\gamma}(\zeta_{\alpha,\gamma}))$ and $\Theta_\beta(\alpha):=\Im\psi_{\alpha,\gamma}(\zeta_{\alpha,\gamma})$.  Throughout the rest of the analysis, $K\Subset I$ means that $K$ is compactly contained in the interval $I$, and constants implicit in $O_K(\cdot)$ may depend on $K$ but are uniform for $\alpha\in K$.  On every compact $K\Subset(0,\alpha_c(\gamma))$, the saddle curve stays a positive distance from $0$, $\pm1$, and the coalescence point.  Hence the phase and amplitude form a uniformly analytic family on fixed saddle neighborhoods, and \eqref{eq:second-derivative} is uniformly bounded away from zero.

\begin{lemma}[Subcritical saddle action and phase]\label{lem:H-less-one}
For every $0<\alpha<\alpha_c(\gamma)$,
\begin{equation}\label{eq:H-range}
  0<H_\beta(\alpha)<1,
\end{equation}
and
\begin{equation}\label{eq:theta-phase-range}
  -\pi<\Theta_\beta(\alpha)<0.
\end{equation}
Moreover,
\begin{equation}\label{eq:H-endpoints}
  \lim_{\alpha\downarrow0}H_\beta(\alpha)=1,
  \qquad
  \lim_{\alpha\uparrow\alpha_c(\gamma)}H_\beta(\alpha)
  =\exp\!\left(
      \frac{2\zeta_c}{1+\zeta_c^2}
      -2\operatorname{artanh}\zeta_c
    \right)<1.
\end{equation}
\end{lemma}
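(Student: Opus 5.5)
The plan is to parametrize everything by the saddle angle $\theta\in(0,\pi/(\gamma+1))$ from Lemma~\ref{lem:alpha-monotone}. Along this branch, $\alpha\mapsto\zeta_{\alpha,\gamma}$ is a smooth bijection. I would use two facts.

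\emph{Envelope identity.} Since $\psi'_{\alpha,\gamma}(\zeta_{\alpha,\gamma})=0$ and the saddle is nondegenerate (Lemma~\ref{lem:saddle-pair}), the implicit function theorem makes $\zeta_{\alpha,\gamma}$ smooth in $\alpha$. The chain rule then gives
\[
  \frac{d}{d\alpha}\psi_{\alpha,\gamma}(\zeta_{\alpha,\gamma})=\zeta_{\alpha,\gamma}^\gamma=r^\gamma\e^{\ii\gamma\theta}.
\]

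\emph{Simplified saddle value.} Using \eqref{eq:saddle-eq} to eliminate $\alpha$ gives $\alpha\zeta^\gamma=2\zeta/(\gamma(1-\zeta^2))$. Hence at the saddle
\[
  \psi_{\alpha,\gamma}(\zeta)=\frac{2\zeta}{\gamma(1-\zeta^2)}+\log\frac{1-\zeta}{1+\zeta}.
\]

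\emph{Endpoint limits.} First take $\alpha\uparrow\alpha_c$, that is $\theta\downarrow0$ and $\zeta\to\zeta_c$ real. Substituting $1-\zeta_c^2=2/(\gamma+1)$ and $1+\zeta_c^2=2\gamma/(\gamma+1)$ gives
\[
  \frac{2\zeta_c}{\gamma(1-\zeta_c^2)}=\frac{2\zeta_c}{1+\zeta_c^2}.
\]
This is exactly the exponent in \eqref{eq:H-endpoints}, and the imaginary part tends to $0$. The limit is negative because $t\mapsto 2t/(1+t^2)-2\operatorname{artanh}t$ vanishes at $t=0$ and has derivative $2(1-t^2)/(1+t^2)^2-2/(1-t^2)<0$ on $(0,1)$.

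Next take $\alpha\downarrow0$, that is $\theta\uparrow\pi/(\gamma+1)$. Then $r\to\infty$ by \eqref{eq:rtheta}, with $r^\gamma\asymp\sin((\gamma+1)\theta)^{-\gamma/2}$. From \eqref{eq:alpha-theta}, $\alpha\asymp\sin((\gamma+1)\theta)^{(\gamma+1)/2}$, so $\alpha\zeta^\gamma=O(\sin((\gamma+1)\theta)^{1/2})\to0$. Meanwhile $(1-\zeta)/(1+\zeta)\to-1$ from the lower half-plane, so $\log\frac{1-\zeta}{1+\zeta}\to-\ii\pi$. This gives $H_\beta\to1$ and $\Theta_\beta\to-\pi$. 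The main technical care is here: checking the branch of the logarithm (principal, consistent with the first-quadrant choice) and the precise rates from \eqref{eq:rtheta}--\eqref{eq:alpha-theta}.

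\emph{Interior bounds.} For the phase, the envelope identity gives $\frac{d}{d\alpha}\Theta_\beta=r^\gamma\sin(\gamma\theta)>0$, since $0<\gamma\theta<\gamma\pi/(\gamma+1)<\pi$. So $\Theta_\beta$ is strictly increasing from $-\pi$ to $0$, which yields \eqref{eq:theta-phase-range}.

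For the rate, put $F(\alpha)=\log H_\beta(\alpha)$. Then $F'(\alpha)=r^\gamma\cos(\gamma\theta)$, which vanishes only at $\theta=\pi/(2\gamma)$. This angle lies in the admissible interval because $\gamma>1$. As $\alpha$ increases, $\theta$ decreases: $F'<0$ for $\theta>\pi/(2\gamma)$ (small $\alpha$) and $F'>0$ for $\theta<\pi/(2\gamma)$ (large $\alpha$). Thus $F$ decreases from $0$ and then increases to a negative limit, so $F<0$ on the whole interval. Together with the trivial $H_\beta>0$, this gives \eqref{eq:H-range}.

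The main obstacle is the $\alpha\downarrow0$ endpoint. There one must justify that the saddle escapes to infinity along the ray $\arg\zeta=\pi/(\gamma+1)$ with $\alpha\zeta^\gamma\to0$, and that the branch of $\log\frac{1-\zeta}{1+\zeta}$ tends to $-\ii\pi$ rather than $+\ii\pi$. I would check both directly from the explicit parametrization.
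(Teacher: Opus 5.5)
Your proposal is correct and follows essentially the same route as the paper: the envelope identities $h_\beta'(\alpha)=\Re(\zeta_{\alpha,\gamma}^\gamma)$ and $\Theta_\beta'(\alpha)=\Im(\zeta_{\alpha,\gamma}^\gamma)>0$, the endpoint limits obtained from the saddle equation together with $(1-\zeta)/(1+\zeta)\to-1$ from the lower half-plane, and the single sign change of $\cos(\gamma\theta)$. Your elimination $\alpha\zeta^\gamma=2\zeta/(\gamma(1-\zeta^2))$ and your derivative check on $2t/(1+t^2)-2\operatorname{artanh}t$ make explicit steps that the paper states more briefly.
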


\begin{proof}
Let $h_\beta(\alpha)=\log H_\beta(\alpha)$.  Stationarity of the saddle gives the envelope identities
\begin{equation}\label{eq:hprime-early}
  h_\beta'(\alpha)=\Re(\zeta_{\alpha,\gamma}^{\gamma}),
  \qquad
  \Theta_\beta'(\alpha)=\Im(\zeta_{\alpha,\gamma}^{\gamma})>0.
\end{equation}
As $\alpha\downarrow0$, the saddle tends to infinity while $\alpha\zeta_{\alpha,\gamma}^{\gamma}\to0$ by \eqref{eq:saddle-eq}.  Also $(1-\zeta_{\alpha,\gamma})/(1+\zeta_{\alpha,\gamma})\to-1$ from the lower half-plane.  Hence $h_\beta(\alpha)\to0$ and $\Theta_\beta(\alpha)\to-\pi$.  At coalescence the saddle is positive real, so $\Theta_\beta(\alpha)\to0$.  The second identity in \eqref{eq:hprime-early} therefore proves \eqref{eq:theta-phase-range}.

At $\alpha=\alpha_c(\gamma)$, $\zeta_c=\sqrt{(\gamma-1)/(\gamma+1)}$ is real.  Using the saddle equation and $\gamma=(1+\zeta_c^2)/(1-\zeta_c^2)$ gives
\[
  h_\beta(\alpha_c(\gamma)^-)
  =\frac{2\zeta_c}{1+\zeta_c^2}
   +\log\frac{1-\zeta_c}{1+\zeta_c}
  =\frac{2\zeta_c}{1+\zeta_c^2}
   -2\operatorname{artanh}\zeta_c<0,
\]
because $\operatorname{artanh}x>x/(1+x^2)$ for $0<x<1$.  Finally, with $\zeta=re^{\ii\theta}$, the first identity in \eqref{eq:hprime-early} has the sign of $\cos(\gamma\theta)$.  Since $\theta$ decreases continuously from $\pi/(\gamma+1)$ to $0$, this derivative changes sign exactly once.  Thus $h_\beta$ first decreases from $0$ and then increases to the still negative endpoint value above, proving \eqref{eq:H-range} and \eqref{eq:H-endpoints}.
\end{proof}

\subsection{Compact-ratio contour topology and localization}\label{subsec:steepest-geometry}

A local saddle expansion is not enough: we also need a contour on which these saddles are globally dominant.  Only a compact-$\alpha$ statement is needed, so we make exactly that topology explicit and no more.  The global issue has three parts: identify the two ends of the descending thimble, rule out the negative-real branch cut as a competing endpoint, and then make the resulting saddle gap uniform when $\alpha$ ranges over a compact subinterval.  The next three lemmas perform precisely these tasks.  In the open upper half-plane use
\begin{equation}\label{eq:upper-phase}
  \psi^+_{\alpha,\gamma}(\zeta)
  :=\alpha\zeta^\gamma+\log(1-\zeta)-\log(1+\zeta),
\end{equation}
where all powers and logarithms are taken by their principal branches.  This is analytic for $\Im\zeta>0$ and agrees with the phase \eqref{eq:phase} near the first-quadrant saddle.  Write $u:=\Re\psi^+_{\alpha,\gamma}$ and $v:=\Im\psi^+_{\alpha,\gamma}$. The descending flow is
\begin{equation}\label{eq:gradient-flow}
  \frac{d\zeta}{dt}=-\overline{(\psi^+_{\alpha,\gamma})'(\zeta)}.
\end{equation}
Along every nonstationary trajectory,
\begin{equation}\label{eq:flow-properties}
  \frac{d}{dt}u(\zeta(t))
  =-\bigl|(\psi^+_{\alpha,\gamma})'(\zeta(t))\bigr|^2<0,
  \qquad
  \frac{d}{dt}v(\zeta(t))=0.
\end{equation}

The following elementary boundary estimate is useful for ruling out an apparent extra endpoint on the negative-real branch cut.

\begin{lemma}[Negative-real boundary barrier]\label{lem:negative-boundary-barrier}
For $0<r<1$,
\begin{equation}\label{eq:artanh-barrier}
  2\operatorname{artanh}r\ge \alpha_c(\gamma)r^\gamma.
\end{equation}
Consequently, for every $0<\alpha<\alpha_c(\gamma)$,
\begin{equation}\label{eq:negative-boundary-u}
  \Re\psi^+_{\alpha,\gamma}(-r+\ii0)>0,
  \qquad 0<r<1.
\end{equation}
\end{lemma}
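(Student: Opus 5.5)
The plan is to prove \eqref{eq:artanh-barrier} by a one-variable monotonicity argument built on the critical saddle equation, and then to obtain \eqref{eq:negative-boundary-u} by evaluating the real part of the phase on the upper edge of the cut.

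\emph{Step 1 (the barrier inequality).} For $0\le x<1$ set
\[
  \varphi(x):=\alpha_c(\gamma)x^\gamma+\log\frac{1-x}{1+x}
  =\alpha_c(\gamma)x^\gamma-2\operatorname{artanh}x .
\]
This is the real restriction of $\psi_{\alpha_c,\gamma}$, so \eqref{eq:artanh-barrier} is exactly the statement $\varphi\le0$ on $(0,1)$. Since $\varphi(0)=0$, it suffices to show $\varphi'\le0$ on $(0,1)$. We have
\[
  \varphi'(x)=\alpha_c(\gamma)\gamma x^{\gamma-1}-\frac{2}{1-x^2},
\]
so $\varphi'(x)\le0$ is equivalent to $\alpha_c(\gamma)\gamma\, x^{\gamma-1}(1-x^2)\le2$.

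\emph{Step 2 (computing the maximum).} The function $x^{\gamma-1}(1-x^2)$ on $(0,1)$ has a unique maximum, at $x^2=(\gamma-1)/(\gamma+1)=\zeta_c^2$. Its maximal value is
\[
  \Bigl(\tfrac{\gamma-1}{\gamma+1}\Bigr)^{(\gamma-1)/2}\frac{2}{\gamma+1}.
\]
Multiplying by $\gamma\alpha_c(\gamma)=(\gamma+1)^{(\gamma+1)/2}/(\gamma-1)^{(\gamma-1)/2}$ from \eqref{eq:alpha-c} gives exactly $2$. Hence $\varphi'\le0$, with equality only at $\zeta_c$, which is the coalesced double saddle of Lemma~\ref{lem:saddle-pair}. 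Therefore $\varphi(r)<0$ for all $0<r<1$, and \eqref{eq:artanh-barrier} holds (even strictly). This bookkeeping step is where I would be most careful, since the identity must close exactly; but the value of $\alpha_c$ was defined precisely by this coalescence, so it should.

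\emph{Step 3 (the boundary value).} At $\zeta=-r+\ii0$ the principal branches in \eqref{eq:upper-phase} give $\zeta^\gamma=r^\gamma\e^{\ii\pi\gamma}$. They also give $\Re\log(1-\zeta)=\log(1+r)$ and $\Re\log(1+\zeta)=\log(1-r)$. Hence
\[
  \Re\psi^+_{\alpha,\gamma}(-r+\ii0)=\alpha r^\gamma\cos(\pi\gamma)+2\operatorname{artanh}r
  \ge 2\operatorname{artanh}r-\alpha r^\gamma .
\]
Since $r^\gamma>0$ and $\alpha<\alpha_c(\gamma)$, the right-hand side is strictly greater than $2\operatorname{artanh}r-\alpha_c(\gamma)r^\gamma$. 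By Step~2 this last quantity is $\ge0$, which proves \eqref{eq:negative-boundary-u}.
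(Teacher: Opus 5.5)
Your proof is correct and follows the paper's argument. The paper obtains the barrier by integrating the pointwise bound $(1-t^2)^{-1}\ge(\gamma\alpha_c(\gamma)/2)t^{\gamma-1}$, which is exactly your sign condition on $\varphi'$, and then evaluates $\Re\psi^+_{\alpha,\gamma}(-r+\ii0)=2\operatorname{artanh}r+\alpha r^\gamma\cos(\pi\gamma)$ as you do; the only cosmetic difference is that the paper splits on the sign of $\cos(\pi\gamma)$, whereas your uniform bound $\cos(\pi\gamma)\ge-1$ covers both cases at once.
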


\begin{proof}
For $0<t<1$, $t^{1-\gamma}/(1-t^2)$ has its unique minimum $\gamma\alpha_c(\gamma)/2$ at $t^2=(\gamma-1)/(\gamma+1)$. Hence $(1-t^2)^{-1}\ge(\gamma\alpha_c(\gamma)/2)t^{\gamma-1}$, and integration gives \eqref{eq:artanh-barrier}. On the upper side of $(-1,0)$, $\Re\psi^+_{\alpha,\gamma}(-r+\ii0)=2\operatorname{artanh}r+\alpha r^\gamma\cos(\pi\gamma)$. This is positive if $\cos(\pi\gamma)\ge0$; otherwise \eqref{eq:artanh-barrier} gives $\Re\psi^+_{\alpha,\gamma}(-r+\ii0)\ge(\alpha_c(\gamma)-\alpha|\cos(\pi\gamma)|)r^\gamma>0$.
\end{proof}

\begin{lemma}[Topology of the upper descending thimble]\label{lem:thimble-topology}
Fix $0<\alpha<\alpha_c(\gamma)$ and let $\zeta_{\alpha,\gamma}$ be the first-quadrant saddle.  The two descending separatrices of \eqref{eq:gradient-flow} leaving this saddle have distinct ends: one approaches $\zeta=1$ from the upper half-plane, and the other is unbounded with
\begin{equation}\label{eq:thimble-infinity-direction}
  \arg\zeta\longrightarrow {\pi}/{\gamma}.
\end{equation}
Along either branch, $u$ decreases strictly from $\log H_\beta(\alpha)$ to $-\infty$.  In particular, their union, oriented from $1$ through the saddle to infinity, is a steepest-descent contour on which the saddle is the unique maximizer of $u$.
\end{lemma}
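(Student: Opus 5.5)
The approach is a topological classification of where descending trajectories of \eqref{eq:gradient-flow} can end in the closed upper half-plane, followed by a counting argument for the two separatrices. Since $u$ strictly decreases and $v$ is constant along trajectories by \eqref{eq:flow-properties}, and since Lemma~\ref{lem:saddle-pair} shows $\zeta_{\alpha,\gamma}$ is the only critical point of $\psi^+_{\alpha,\gamma}$ in $\Im\zeta>0$, a maximal descending trajectory cannot accumulate at an interior point. It must leave every compact subset of the open upper half-plane. Its end therefore lies on $\R\cup\{\infty\}$.

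The first step lists the possible endpoints and the behaviour of $u$ there.
\begin{itemize}
\item Near $\zeta=1$, $u\to-\infty$ because of $\log|1-\zeta|$.
\item Near $\zeta=-1$ (upper side), $u\to+\infty$, so no descending trajectory ends there.
\item On the open segments $(0,1)$ and $(1,\infty)$ the phase extends analytically across the axis. By conjugation symmetry the real axis there is invariant for the gradient flow, so a trajectory from the open half-plane cannot reach an interior point of these segments in finite flow (uniqueness of ODE solutions). Reaching them as a limit would force a critical point on the axis. This is excluded for $\alpha<\alpha_c$: the real function $\alpha\gamma t^{\gamma-1}(1-t^2)$ stays below $2$ on $(0,1)$ by the minimization in Lemma~\ref{lem:negative-boundary-barrier}, and it is negative on $(1,\infty)$.
\item On $(-1,0)$ the upper boundary value of $u$ is positive by Lemma~\ref{lem:negative-boundary-barrier}, while $u<\log H_\beta(\alpha)<0$ along the separatrix by Lemma~\ref{lem:H-less-one}. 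Continuity of $u$ up to the cut therefore forbids accumulation there.
\item On $(-\infty,-1)$, $u=\alpha r^\gamma\cos(\pi\gamma)+\log\frac{r-1}{r+1}$ on the upper side. This is handled in the final step, together with infinity.
\item At $0$, $u\to0>\log H_\beta$, which is likewise excluded.
\end{itemize}

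Next comes the behaviour at infinity. With $\zeta=R\e^{\ii\phi}$ one has $u=\alpha R^\gamma\cos(\gamma\phi)+O(R^{-1})$. Descending trajectories to infinity therefore asymptote to the ray where $u$ decreases fastest and $v=\alpha R^\gamma\sin(\gamma\phi)+\text{const}$ stays bounded. Constancy of $v$ forces $\sin(\gamma\phi)\to0$, and descent forces $\cos(\gamma\phi)=-1$. In $0\le\phi\le\pi$ with $1<\gamma\le2$ this gives exactly $\phi=\pi/\gamma$. This ray lies in $[\pi/2,\pi)$, and equals $\pi$ only if $\gamma=1$, which is excluded. The same constancy of $v$, with $v$ of order $R^\gamma$ on the negative axis beyond $-1$, rules out accumulation on $(-\infty,-1)$ at finite points: there the values are finite, with $u$ continuous and no critical points. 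Thus the only possible ends are $1$ and $\infty$ along $\arg\zeta\to\pi/\gamma$.

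The main obstacle is showing that the two separatrices have \emph{distinct} ends. I would use the imaginary part. Both separatrices carry $v=\Theta_\beta(\alpha)\in(-\pi,0)$ by \eqref{eq:theta-phase-range}. Suppose both went to $1$. Together they would bound a region $\Omega$ in the upper half-plane containing no critical point. On $\partial\Omega$ the harmonic function $v$ is constant except at the single boundary point $1$, where $v$ approaches the limit $\arg(1-\zeta)-\arg(1+\zeta)$ along directions spanning an interval. A maximum-principle or argument-principle count of the level set $\{v=\Theta_\beta\}$ would then give a contradiction, since $v$ would have to be constant on $\Omega$. Suppose instead both went to infinity along the single direction $\pi/\gamma$. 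The region between them would be unbounded with $u\to-\infty$ on its boundary, and it would contain no saddle. A Phragmén--Lindelöf argument on the sector $|\phi-\pi/\gamma|<\epsilon$, where $u\to-\infty$, forces the region to be empty, again a contradiction. The two ends are therefore $1$ and $\infty$. Monotonicity of $u$ from $\log H_\beta(\alpha)$ to $-\infty$ then holds on each branch, which makes the saddle the unique maximizer of $u$ on the union.
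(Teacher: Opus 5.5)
Your overall architecture matches the paper's: constancy of $v$ and strict decrease of $u$ along trajectories, the barrier of Lemma~\ref{lem:negative-boundary-barrier} on $(-1,0)$, exclusion of $0$ and $-1$, and the direction analysis at infinity giving $\arg\zeta\to\pi/\gamma$. The step that fails as written is the exclusion of the upper side of $(-\infty,-1)$. That is exactly the branch-cut endpoint this lemma must rule out.

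\textbf{The gap on $(-\infty,-1)$.} Your justification (``values finite, $u$ continuous, no critical points'') is the invariance/LaSalle argument you used on $(0,1)$ and $(1,\infty)$, and it does not transfer here.
\begin{itemize}
\item For $1<\gamma<2$, the continued derivative $(\psi^+_{\alpha,\gamma})'(x+\ii0)=\alpha\gamma|x|^{\gamma-1}\e^{\ii\pi(\gamma-1)}-2/(1-x^2)$ is not real. So the cut is not invariant for the continued flow, and absence of critical points does not stop a trajectory from reaching it.
\item For $\gamma=2$, the claim ``no critical points'' is false: the cubic $\alpha(\zeta^3-\zeta)+1=0$ has a real root below $-1$, which is a genuine critical point of the continued phase.
\item $u$ cannot do the job either. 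Its upper boundary value $\alpha|x|^\gamma\cos(\pi\gamma)+\log\frac{|x|-1}{|x|+1}$ lies below $\log H_\beta(\alpha)$ for large $|x|$ whenever $\cos(\pi\gamma)<0$.
\item The phrase ``$v$ of order $R^\gamma$'' is wrong at $\gamma=2$ and says nothing at finite points.
\end{itemize}
What is needed is the one-line computation the paper uses: $v(x+\ii0)=\alpha|x|^\gamma\sin(\pi\gamma)-\pi\le-\pi<\Theta_\beta(\alpha)$ for every $x<-1$ and every $1<\gamma\le2$.

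\textbf{Distinct ends: a different route.} The paper argues locally. Near $1$, $\psi^+_{\alpha,\gamma}=\log\chi+C_1+O(\chi)$ with $C_1$ real, so exactly one arc of $\{v=\Theta_\beta(\alpha)\}$ enters $1$. At infinity, the implicit function theorem gives exactly one descending level arc, with $\delta(r)\sim-(\Theta_\beta(\alpha)+\pi)r^{-\gamma}/(\alpha\gamma)$. Each end therefore carries at most one separatrix.

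You argue globally through the region $\Omega$ enclosed by the two separatrices.
\begin{itemize}
\item The ``both to $1$'' case is sound once stated precisely. $v$ is bounded and harmonic on the bounded domain $\Omega\subset\{\Im\zeta>0\}$ and equals $\Theta_\beta(\alpha)$ on $\partial\Omega\setminus\{1\}$. A single boundary point is negligible for bounded harmonic functions, so $v$ would be constant.
\item In the ``both to infinity'' case, ``forces the region to be empty'' is not a contradiction mechanism. A correct version runs as follows. Since both separatrices have $\arg\zeta\to\pi/\gamma$, the component of the complement not containing $\R$ lies, outside a large disk, in a thin sector about $\pi/\gamma$, where $u\to-\infty$. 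The plain maximum principle then gives $u\le\log H_\beta(\alpha)$ on $\Omega$. But each side of the descending curve through a nondegenerate saddle contains an ascending ray on which $u>\log H_\beta(\alpha)$.
\end{itemize}
This $u$-argument also covers the ``both to $1$'' case and needs no Phragm\'en--Lindel\"of. Your route avoids expansions at the ends but needs the Jordan-curve and thin-sector topology. The paper's route is shorter and yields the asymptotic $\delta(r)$ that Lemma~\ref{lem:global-contour} reuses to close the contour at infinity.

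\textbf{The segments $(0,1)$ and $(1,\infty)$.} Your invariance-plus-LaSalle argument is valid there: the continued derivative is real, and $\alpha<\alpha_c(\gamma)$ excludes real critical points. The paper's observation that the boundary values of $v$ are $0$ and $-\pi$, neither equal to $\Theta_\beta(\alpha)$, is more direct. It is also the same tool that repairs your $(-\infty,-1)$ step.
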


\begin{proof}
Both descending branches lie in the critical level $v=\Theta_\beta(\alpha)$, with $-\pi<\Theta_\beta(\alpha)<0$, and start at the negative height $u(\zeta_{\alpha,\gamma})=\log H_\beta(\alpha)<0$ by Lemma~\ref{lem:H-less-one}. Standard continuation for the smooth planar ODE~\eqref{eq:gradient-flow} shows that a maximal forward branch can cease to exist only by approaching the boundary of the upper half-plane or by leaving every compact subset. A descending branch cannot terminate at a regular interior point.  Nor can it recur or form a closed orbit, because $u$ decreases strictly along every nonstationary trajectory.  If a forward branch stayed in a compact subset of the upper half-plane while remaining a positive distance from the critical point, then $|(\psi_{\alpha,\gamma}^+)'|$ would have a positive lower bound there and \eqref{eq:flow-properties} would force $u\to-\infty$, impossible on a compact set.  Thus any compact limiting set would contain a critical point; the uniqueness in Lemma~\ref{lem:saddle-pair}, together with the strictly smaller value of $u$ after leaving the saddle, excludes this possibility.

We next inspect the real boundary.  For $0<x<1$, the upper boundary value of $v$ is $0$, while for $x>1$ it is $-\pi$; neither equals $\Theta_\beta(\alpha)$.  On $-1<x<0$ the level $v=\Theta_\beta(\alpha)$ may intersect the boundary, but Lemma~\ref{lem:negative-boundary-barrier} gives $u>0$, whereas $u$ strictly decreases from a negative saddle value along a descending branch.  Thus such an intersection cannot be a descending endpoint.  For $x<-1$, $v(x+\ii0)=\alpha|x|^\gamma\sin(\pi\gamma)-\pi\le-\pi$, with equality only when $\gamma=2$, so this boundary is also excluded.  The branch point $0$ has boundary phase $0$, and $u\to+\infty$ at $-1$.  Hence the only possible finite boundary endpoint of a descending branch is $\zeta=1$, where $u\to-\infty$.

If a descending branch is unbounded, write $\zeta=re^{\ii\vartheta}$.  Since $v$ is constant and the logarithmic term in \eqref{eq:upper-phase} has bounded imaginary part, $\sin(\gamma\vartheta)\to0$. Any accumulation direction is therefore a multiple of $\pi/\gamma$.  The direction $\vartheta\to0$ is impossible because then $u\to+\infty$; when $\gamma=2$, the same argument excludes $\vartheta\to\pi$.  The only descending channel in the open upper half-plane is consequently \eqref{eq:thimble-infinity-direction}, along which $u\to-\infty$.

It remains to see that the two descending branches cannot share the same end.  Near $\zeta=1$, set $\chi=1-\zeta$; then $\psi^+_{\alpha,\gamma}(\zeta)=\log\chi+C_1+O(\chi)$ with $C_1\in\R$.  Thus the equation $v=\Theta_\beta(\alpha)$ has exactly one upper-half-plane arc approaching $1$, with $\arg\chi\to\Theta_\beta(\alpha)$.  At infinity, the descending solution of the same level equation is likewise unique: writing $\vartheta=\pi/\gamma+\delta$ gives
\[
  v=-\pi-\alpha\gamma r^\gamma\delta+o(1)+O(r^\gamma\delta^2),
\]
so the implicit function theorem yields a unique branch with
\[
  \delta(r)
  =-\frac{\Theta_\beta(\alpha)+\pi}{\alpha\gamma}\,r^{-\gamma}
   +o(r^{-\gamma}).
\]
(The other level arc at infinity lies near the positive real direction and has $u\to+\infty$, hence is ascending.)  Each descending end therefore supports at most one separatrix.  Since there are two descending separatrices and the only possible descending ends are $1$ and the channel \eqref{eq:thimble-infinity-direction}, one branch has each end.  Strict monotonicity of $u$ follows from \eqref{eq:flow-properties}.
\end{proof}

\begin{lemma}[Compact-ratio saddle localization]\label{lem:global-contour}
Let $K\Subset(0,\alpha_c(\gamma))$.  For every $\alpha\in K$, after a vanishing indentation at $0$, the contour $\ii\R$ in \eqref{eq:coeff-exact} can be deformed to conjugate contours $\Gamma_\pm(\alpha)$ through $\zeta_{\alpha,\gamma}$ and $\overline{\zeta_{\alpha,\gamma}}$, together with oppositely oriented copies of the real segment $(0,1)$; the latter cancel in the sum of the upper and lower integrals.  Along $\Gamma_+(\alpha)$, $\Re\psi_{\alpha,\gamma}$ has its unique maximum at $\zeta_{\alpha,\gamma}$, and similarly on $\Gamma_-(\alpha)$.  Moreover, fixed saddle neighborhoods $U_\pm(\alpha)$ and $c_K>0$ may be chosen so that, uniformly for $\alpha\in K$,
\begin{equation}\label{eq:global-gap}
  \Re\psi_{\alpha,\gamma}(\zeta)
  \le
  \Re\psi_{\alpha,\gamma}(\zeta_{\alpha,\gamma})-c_K
\end{equation}
away from those neighborhoods.  The unbounded ends satisfy $\arg\zeta\to\pm\pi/\gamma$ and have real phase tending to $-\infty$.
\end{lemma}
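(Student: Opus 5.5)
The plan is to build the contour from the thimble of Lemma~\ref{lem:thimble-topology} and then close it with Cauchy's theorem in each half-plane separately. I will split \eqref{eq:coeff-exact} into the upper half $\int_{0}^{\ii\infty}$ and the lower half $\int_{-\ii\infty}^{0}$. The integrand $\e^{q\zeta^\gamma}(1-\zeta)^{n-1}(1+\zeta)^{-n-1}$ is continuous at $0$, since $\zeta^\gamma\to0$. Hence a vanishing indentation at $0$ costs nothing, and it lets us regard each half as an integral in the corresponding open quadrant.

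\textbf{Upper half.} Let $\Gamma_+(\alpha)$ be the thimble of Lemma~\ref{lem:thimble-topology}, running from $1$ through $\zeta_{\alpha,\gamma}$ to $\infty$ with $\arg\zeta\to\pi/\gamma$. Since $\gamma>1$, this direction satisfies $\pi/\gamma<\pi$, and it lies to the left of or on the imaginary axis exactly when $\gamma\le2$. Consider the closed curve formed by:
\begin{itemize}
\item the segment $[0,1]$,
\item $\Gamma_+$,
\item a large arc from $\arg\zeta=\pi/\gamma$ to $\arg\zeta=\pi/2$,
\item the imaginary axis traversed back down to $0$.
\end{itemize}
This curve bounds a region in the closed upper half-plane. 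The integrand has no singularity there: the only poles are at $\zeta=-1$, which lies outside, and the branch cut of $\zeta^\gamma$ is the negative real axis, also outside. At $\zeta=1$ the integrand even vanishes for $n\ge2$, and it is integrable for $n=1$.

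The arc contribution must vanish as the radius grows. On the sector between $\pi/2$ and $\pi/\gamma$ we have $\Re\zeta^\gamma=r^\gamma\cos(\gamma\vartheta)$ with $\gamma\vartheta\in[\gamma\pi/2,\pi]$. This quantity is $\le -c\,r^\gamma$ except near $\vartheta=\pi/\gamma$, and the rational factor decays like $r^{-2}$. Near the endpoint $\pi/\gamma$ itself I would not use a circular arc. Instead I would join the imaginary axis to the thimble along a level path on which $u$ tends to $-\infty$, using the asymptotic form $\delta(r)\sim -(\Theta+\pi)/(\alpha\gamma)\,r^{-\gamma}$ from the proof of Lemma~\ref{lem:thimble-topology}. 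On that path $\Re(\alpha n\zeta^\gamma)$ stays bounded while the factor $(1+\zeta)^{-2}$ integrates to $o(1)$. The net result is
\[
\int_0^{\ii\infty}=\int_{\Gamma_+}+\int_0^1,
\]
with orientations fixed accordingly.

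\textbf{Lower half.} Conjugation symmetry gives the analogous identity with $\Gamma_-=\overline{\Gamma_+}$ and the segment $(0,1)$ traversed in the opposite direction. On $(0,1)$ the upper and lower boundary values of the integrand coincide, because the function is real-analytic there. The two segment integrals therefore cancel, and the upper and lower pieces together reproduce $\ii\R$.

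\textbf{Uniqueness of the maximum and the gap.} Uniqueness of the maximum of $\Re\psi$ on $\Gamma_\pm$ is Lemma~\ref{lem:thimble-topology}. For the uniform gap \eqref{eq:global-gap}, I would first fix radii $\delta_K$ such that the saddles stay a positive distance from $0$, $\pm1$ and the coalescence point for $\alpha\in K$, as noted before Lemma~\ref{lem:H-less-one}. Outside $U_\pm$, the function $u$ along the thimble drops by an amount $c(\alpha)>0$. This drop is continuous in $\alpha$ because the separatrices depend continuously on $\alpha$: the flow is smooth in $\alpha$, and the saddle is nondegenerate by \eqref{eq:second-derivative}.

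\textbf{Main obstacle.} The delicate step is making this continuity uniform all the way to the ends $1$ and $\infty$. Near $\zeta=1$ we have $u=\log|1-\zeta|+O(1)$, and near infinity we have $u\le -c\,r^\gamma$, both uniformly in $\alpha\in K$. So outside a fixed compact set the phase is already below $\log H_\beta-1$. On the remaining compact piece, the joint continuity of $(\alpha,t)\mapsto\zeta(t)$ together with compactness of $K$ gives $c_K=\min_{\alpha\in K} c(\alpha)>0$. The statement $\arg\zeta\to\pm\pi/\gamma$ with $u\to-\infty$ is inherited directly from Lemma~\ref{lem:thimble-topology}.
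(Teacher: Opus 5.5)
Your contour construction follows the paper's proof. It uses a vanishing indentation at $0$, takes the thimble of Lemma~\ref{lem:thimble-topology} as $\Gamma_+$, closes with a large arc and Cauchy's theorem in the upper half-plane, handles the lower half by conjugation, and cancels the two copies of $(0,1)$ exactly.

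\textbf{The large-arc step is wrong as written.} You say the bound $\cos(\gamma\vartheta)\le-c$ fails near $\vartheta=\pi/\gamma$. In fact $\gamma\vartheta\to\pi$ there, so $\cos(\gamma\vartheta)\to-1$, the most favorable value on the arc. What degenerates in the thimble direction is $\sin(\gamma\vartheta)\to0$, the condition fixing the level curve $v=\Theta_\beta(\alpha)$.

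The substitute you propose cannot be carried out. For large $|\zeta|$ one has $u=\alpha\Re(\zeta^\gamma)+O(|\zeta|^{-1})$, so no connecting path can have $u\to-\infty$ while $\Re(\alpha n\zeta^\gamma)$ stays bounded. Moreover, no actual curve from $\ii R$ to the thimble is specified.

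The detour is also unnecessary, and the plain arc is the paper's argument. By the expansion of $\delta(r)$ in the proof of Lemma~\ref{lem:thimble-topology}, the thimble meets $|\zeta|=R$ at angle $\pi/\gamma+O(R^{-\gamma})$. So on the shorter circular arc to $\ii R$ one has $\gamma\vartheta\in[\gamma\pi/2,\pi+O(R^{-\gamma})]$. Hence $\cos(\gamma\vartheta)\le\cos(\gamma\pi/2)<0$ for $1<\gamma<2$ and large $R$. For $\gamma=2$ the arc stays near $\pi/2$, where $\cos 2\vartheta$ is close to $-1$. In both cases $\Re(\zeta^\gamma)\le-c_\gamma R^\gamma$ on the whole arc.

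A smaller point: for $\gamma<2$ the thimble crosses the imaginary axis, so your closed curve is not simple and does not ``bound a region''. Apply Cauchy's theorem instead in the simply connected domain $\C\setminus(-\infty,0]$, where the exact integrand is analytic.

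\textbf{For the uniform gap you take a different and heavier route.} Since $u$ decreases strictly along each branch by Lemma~\ref{lem:thimble-topology}, the supremum of $u$ on $\Gamma_+\setminus U_+$ is the larger of its two exit values from $U_+$. So the uniform control at $1$ and $\infty$ that you call the main obstacle plays no role in \eqref{eq:global-gap}.

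The paper takes $U_+(\alpha)=\{|u_\alpha|<\eta\}$ from a uniform parametric Morse chart, $\psi_{\alpha,\gamma}=\psi_{\alpha,\gamma}(\zeta_{\alpha,\gamma})-\tfrac12u_\alpha^2$. The exit value is then exactly $\log H_\beta(\alpha)-\eta^2/2$, giving $c_K=\eta^2/2$ with no continuity of global thimbles in $\alpha$.

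With round discs your argument still works, provided you justify continuity of the exit points. That follows from the parameter dependence of the local unstable manifold of the nondegenerate saddle. A global parametrization $(\alpha,t)\mapsto\zeta(t)$ is awkward, because the flow time from the saddle is infinite.

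The uniform compactness of the middle thimble pieces that you sketch is what the paper uses afterwards. It serves the arclength bound behind Lemma~\ref{lem:off-saddle-integral}, not the gap.
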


\begin{proof}
Consider first the upper half-plane and an integer $n\ge2$.  By Lemma~\ref{lem:thimble-topology}, the upper descending thimble is a curve from $1$ through $\zeta_{\alpha,\gamma}$ to infinity in the direction $\pi/\gamma$.  For $0<\varepsilon<1<R$, replace the endpoint of the upper imaginary ray near $0$ by the right semicircle $|\zeta|=\varepsilon$.  The exact integrand in \eqref{eq:coeff-exact} is bounded there, so this indentation contributes $O(\varepsilon)$.

Truncate the thimble at radius $R$ and join its outer endpoint to $\ii R$ by the shorter circular arc.  By \eqref{eq:thimble-infinity-direction} and the asymptotic formula for $\delta(r)$ in the preceding proof, for all sufficiently large $R$ this arc lies entirely in a closed angular sector on which
\[
  \Re(\zeta^\gamma)\le-c_\gamma R^\gamma
\]
for some $c_\gamma>0$ independent of $R$.  Indeed, if $1<\gamma<2$, its angles lie between $\pi/2$ and a number tending to $\pi/\gamma$ from below; if $\gamma=2$, both endpoint angles tend to $\pi/2$ and the shorter arc remains in a fixed neighborhood of that direction, where $\cos(2\arg\zeta)<0$.  Thus the large-arc integral tends to zero exponentially as $R\to\infty$.  Cauchy's theorem in the upper half-plane then deforms the upper imaginary ray to the real connector $(\varepsilon,1)$ followed by the truncated descending thimble.  The deformation does not cross the negative-real branch cut or the pole $-1$; at $\zeta=1$ the exact integrand is regular (and vanishes to order $n-1$ for $n\ge2$).

Conjugation gives the lower deformation.  The exact integrand has the same boundary value on $(0,1)$ from above and below, while the two real connectors have opposite orientations.  They therefore cancel exactly.  Letting first $R\to\infty$ and then $\varepsilon\downarrow0$ gives the asserted conjugate contour representation.

It remains to make the saddle gap uniform in $\alpha\in K$.  The saddle curve $\{\zeta_{\alpha,\gamma}:\alpha\in K\}$ is compact and uniformly nondegenerate.  The analytic Morse lemma with parameter therefore gives one $\eta>0$ and, for every $\alpha\in K$, a local analytic coordinate $u_\alpha$ on a neighborhood of the upper saddle such that
\[
  \psi^+_{\alpha,\gamma}(\zeta)
  =\psi^+_{\alpha,\gamma}(\zeta_{\alpha,\gamma})
   -\frac12u_\alpha(\zeta)^2,
\]
with the upper descending thimble represented locally by real $u_\alpha$.  Shrinking $\eta$ if necessary, these coordinates and neighborhoods may be chosen uniformly on $K$.  Take $U_+(\alpha)$ to be the part with $|u_\alpha|<\eta$, and set $U_-(\alpha):=\overline{U_+(\alpha)}$.  At the two exits from this neighborhood,
\[
  \Re\psi_{\alpha,\gamma}
  =\log H_\beta(\alpha)-\frac12\eta^2.
\]
By the strict decrease of the real phase along each outgoing branch in Lemma~\ref{lem:thimble-topology}, every subsequent point of the thimble satisfies the same inequality.  Thus \eqref{eq:global-gap} holds on the entire upper contour outside $U_+(\alpha)$ with, for example, $c_K=\eta^2/2$; conjugation gives the lower contour.  This gap estimate itself avoids taking limits of parameter-dependent thimbles.

For later integral estimates we also record the required uniform arclength control.  Use the endpoint expansion at $1$ and the uniform large-$|\zeta|$ sector estimate above to remove fixed endpoint and tail pieces.  To justify compactness of the remaining pieces, set
\[
 \eta_K:=\min_{\alpha\in K}\min\{\Theta_\beta(\alpha)+\pi,-\Theta_\beta(\alpha)\}>0,
 \qquad h_K:=\max_{\alpha\in K}\log H_\beta(\alpha)<0.
\]
The phase separation $\eta_K$ excludes approach to the positive real boundary away from $1$ and to $(-\infty,-1)$, uniformly on bounded sets. Near $0$ the real phase tends uniformly to zero, near $-1$ it tends to $+\infty$, and on compact subsegments of $(-1,0)$ the boundary barrier is uniformly positive. None can be approached by a descending trajectory with real phase at most $h_K$. The endpoint and large-radius expansions are uniform for $\alpha\in K$. The exit points from the uniform Morse neighborhoods depend continuously on $\alpha$, and standard continuous dependence of solutions of the smooth ODE~\eqref{eq:gradient-flow} on parameters and initial data shows that the intervening truncated thimble arcs vary continuously with $\alpha$ until they meet the chosen endpoint and tail cross-sections. Together with the uniform endpoint expansions, this shows that the union of these middle pieces over $\alpha\in K$ is a compact subset of the product of $K$ with the open upper half-plane. Removing the uniform Morse neighborhoods excludes the graph of the critical point corresponding to the \emph{same} parameter. Lemma~\ref{lem:saddle-pair} and compactness now give
\[
  |(\psi^+_{\alpha,\gamma})'(\zeta)|\ge d_K>0
\]
uniformly on these middle pieces.  If $d\ell$ is arclength along a descending trajectory, then \eqref{eq:flow-properties} gives $d\ell=-du/|(\psi^+_{\alpha,\gamma})'|$.  Since $u=\Re\psi^+_{\alpha,\gamma}$ ranges over a fixed bounded interval there, the middle-piece arclengths are uniformly bounded.
\end{proof}

\begin{lemma}[Uniform off-saddle integral bound]\label{lem:off-saddle-integral}
Let $K\Subset(0,\alpha_c(\gamma))$, and choose the saddle neighborhoods in Lemma~\ref{lem:global-contour}.  There exist $C_K,c_K>0$ and an integer $n_0\ge2$ such that, for $n\ge n_0$ and $\alpha\in K$,
\begin{equation}\label{eq:off-saddle-integral}
  \int_{\Gamma_\pm(\alpha)\setminus U_\pm(\alpha)}
  |A(\zeta)|\,
  \e^{n\Re\psi_{\alpha,\gamma}(\zeta)}\,|d\zeta|
  \le C_K\,
  \e^{n(\log H_\beta(\alpha)-c_K)}.
\end{equation}
Here the phase--amplitude product is interpreted by continuity at $\zeta=1$, where the exact integrand in \eqref{eq:coeff-exact} is regular.
\end{lemma}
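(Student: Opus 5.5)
By conjugation symmetry it suffices to treat $\Gamma_+(\alpha)\setminus U_+(\alpha)$. We split it into three kinds of pieces, as in the proof of Lemma~\ref{lem:global-contour}: a fixed endpoint piece near $\zeta=1$, a tail piece $|\zeta|\ge R$ in the channel $\arg\zeta\to\pi/\gamma$, and the middle pieces between these and the Morse neighborhood. On each piece we factor
\[
\e^{n\Re\psi}=\e^{(n-n_0)\Re\psi}\,\e^{n_0\Re\psi}.
\]
We use the gap \eqref{eq:global-gap} on the first factor, getting $\e^{(n-n_0)(\log H_\beta(\alpha)-c_K)}$. The second factor, times $|A|$, will be shown integrable uniformly in $\alpha\in K$. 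Since $\log H_\beta$ is bounded on $K$, the discrepancy $\e^{-n_0(\log H_\beta-c_K)}$ is absorbed into $C_K$.

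\emph{Middle pieces.} Lemma~\ref{lem:global-contour} gives uniformly bounded arclength for these pieces. They also lie in a compact set away from $\pm1$, where $|A|$ is bounded. Hence their contribution is at most a constant times $\e^{n(h-c_K)}$, with $h=\log H_\beta(\alpha)$.

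\emph{Endpoint piece near $1$.} Here we use the exact integrand of \eqref{eq:coeff-exact}. The quantity $|A|\e^{n\Re\psi}$ equals $|\e^{\alpha n\zeta^\gamma}|\,|1-\zeta|^{n-1}|1+\zeta|^{-n-1}$, which is bounded by $C\,|1-\zeta|^{n-1}$ uniformly for $\alpha\in K$. The endpoint expansion $\psi=\log\chi+C_1+O(\chi)$ shows the thimble approaches $1$ along a nearly straight ray with arg $\Theta_\beta$. So the arclength on $|\chi|\le\delta$ is $O(\delta)$, and taking $n_0\ge2$ makes the $n_0$ factor integrable. The gap inequality still applies to the remaining exponent. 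Alternatively, note $\Re\psi\le\log|\chi|+C$ directly, so choosing $\delta$ small forces $\Re\psi\le h-c_K$ there anyway.

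\emph{Tail piece.} By the uniform sector estimate from the proof of Lemma~\ref{lem:global-contour}, $\Re\psi\le -c_\gamma\alpha r^\gamma+O(\log r)$ for $r\ge R$, and $|A|=O(r^{-2})$. The curve is a graph over $r$ with $|d\zeta|\le C\,dr$, since $\delta'(r)=O(r^{-\gamma-1})$ from the implicit-function expansion. Then $\int_R^\infty \e^{-n_0c r^\gamma}\,dr$ is finite uniformly, giving the tail bound.

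The main obstacle is ensuring that all constants (the endpoint cutoff $\delta$, the radius $R$, and the arclength/graph control) are uniform in $\alpha\in K$. I would take these from the uniform endpoint and large-radius expansions already asserted in Lemma~\ref{lem:global-contour}, checking that the $O(\chi)$ and $o(r^{-\gamma})$ remainders are uniform by analyticity of $\psi$ in $\alpha$ on compact $K$.
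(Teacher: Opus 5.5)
Your proposal is correct and follows essentially the same route as the paper: the same endpoint/middle/tail decomposition, a fixed-exponent majorant for $|A|\e^{n_0\Re\psi}$ that is integrable uniformly in $\alpha\in K$ on each piece, and the factorization $\e^{n\Re\psi}=\e^{n_0\Re\psi}\e^{(n-n_0)\Re\psi}$ with the uniform gap applied to the second factor. One small imprecision: near $\zeta=1$ the constant in your bound $C|1-\zeta|^{n-1}$ grows geometrically in $n$, so the bound is valid only at the fixed exponent $n_0$, which is in fact how you use it.
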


\begin{proof}
It is enough to treat $\Gamma_+(\alpha)$.  Write $u=\Re\psi_{\alpha,\gamma}^+$ and let $h_\alpha=\log H_\beta(\alpha)$.  By Lemma~\ref{lem:global-contour}, there is $c_0>0$, uniform for $\alpha\in K$, such that
\[
  u\le h_\alpha-2c_0
\]
on $\Gamma_+(\alpha)\setminus U_+(\alpha)$.

We first record a uniform integrable majorant at one fixed exponent.  Near $\zeta=1$, with $\chi=1-\zeta$,
\[
  u=\log|\chi|+O_K(1),
  \qquad |A(\zeta)|=O(|\chi|^{-1}),
\]
and the thimble is a regular arc.  Hence $|A|\e^{n_0u}$ is uniformly integrable there for any fixed $n_0\ge2$.  On the unbounded end, Lemma~\ref{lem:thimble-topology} and its asymptotic direction give
\[
  u\le-c|\zeta|^\gamma,
  \qquad A(\zeta)=O(|\zeta|^{-2}),
\]
uniformly for $\alpha\in K$; the thimble is asymptotic to a ray, so the same fixed-exponent integral is uniformly finite.  On the remaining middle part, the uniform lower bound $|(\psi^+_{\alpha,\gamma})'|\ge d_K$ established in Lemma~\ref{lem:global-contour}, together with $d\ell=-du/|(\psi^+_{\alpha,\gamma})'|$, gives a uniform arclength bound.  Consequently
\begin{equation}\label{eq:fixed-exponent-majorant}
  \sup_{\alpha\in K}
  \int_{\Gamma_+(\alpha)\setminus U_+(\alpha)}
  |A(\zeta)|\e^{n_0u(\zeta)}|d\zeta|<\infty.
\end{equation}
For $n\ge n_0$ factor the integrand as
\[
  |A|\e^{n_0u}\,\e^{(n-n_0)u}
  \le |A|\e^{n_0u}
     \e^{(n-n_0)(h_\alpha-2c_0)}.
\]
Since $h_\alpha$ stays in a compact interval for $\alpha\in K$, \eqref{eq:fixed-exponent-majorant} implies \eqref{eq:off-saddle-integral}, after decreasing $c_0$ and changing the constant.  The lower contour is the complex conjugate.
\end{proof}

\begin{figure}[tbp]
\centering
\includegraphics[width=.98\textwidth]{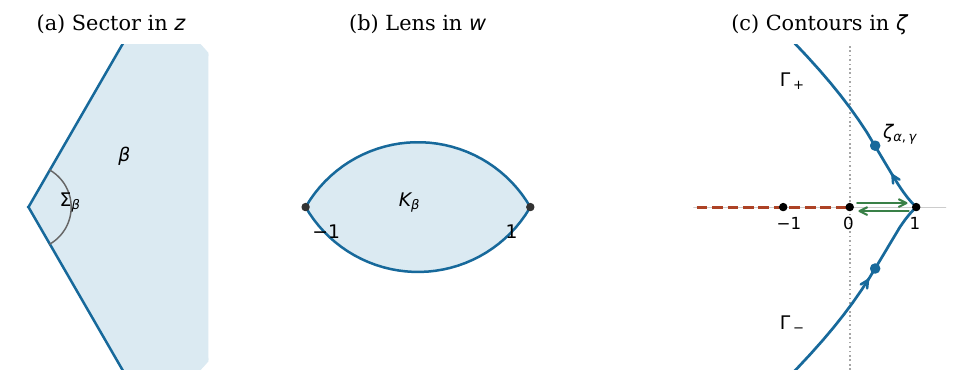}
\caption{Geometry at $\beta=2\pi/3$. Left and middle: the M\"obius image of $\Sigma_\beta$ is $K_\beta$; the unbounded sector in the left panel is shown schematically in a finite plotting window. Right: the deformed coefficient contours at $\alpha=\alpha_\beta^*$, the minimizer of the proved subcritical rate identified in Section~\ref{subsec:optimal-pole}. The upper contour is oriented from $1$ through the saddle to infinity; the lower contour has the reverse conjugate orientation. The real connectors cancel. The dashed negative axis is the branch cut.}
\label{fig:sector-contours}
\end{figure}

\begin{remark}\label{rem:contour-exact-integrand}
Figure~\ref{fig:sector-contours} summarizes the geometry. For noninteger $\gamma$, the indentation at $0$ is used to avoid the branch point of $\zeta^\gamma$; its contribution vanishes with the indentation radius. No indentation is needed when $\gamma=2$.  Lemma~\ref{lem:negative-boundary-barrier} excludes the extra descending endpoint created by the negative-real branch cut, while the two positive-real connectors cancel.  All contour statements here are for compact pole-ratio sets; the remote coefficient tail is treated separately in Section~\ref{sec:error-transfer}.
\end{remark}

\subsection{Uniform two-saddle asymptotics}\label{subsec:two-saddle}

\begin{theorem}[Two-saddle Faber coefficient asymptotics]\label{thm:two-saddle}
Let $K\Subset(0,\alpha_c(\gamma))$.  Uniformly for $\alpha\in K$, as $n\to\infty$,
\begin{equation}\label{eq:two-saddle-complex}
  a_n(\alpha n)
  =\frac{2}{\sqrt n}
  \Re\!\left\{
    \mathcal B_\beta(\alpha)
    \e^{n\psi_{\alpha,\gamma}(\zeta_{\alpha,\gamma})}
  \right\}
  +O_K\!\left(n^{-3/2}H_\beta(\alpha)^n\right),
\end{equation}
where $\mathcal B_\beta$ is a smooth nonvanishing complex amplitude.  With the square-root branch determined by the orientation of the upper steepest-descent contour, it may be written as
\begin{equation}\label{eq:complex-amplitude}
  \mathcal B_\beta(\alpha)
  =\frac{1}{\pi\ii(1-\zeta_{\alpha,\gamma}^2)}
   \left(\frac{2\pi}
   {-\psi''_{\alpha,\gamma}(\zeta_{\alpha,\gamma})}
   \right)^{1/2}.
\end{equation}

Equivalently, there is a smooth phase $\phi_\beta(\alpha)$ such that
\begin{equation}\label{eq:two-saddle-cos}
  a_n(\alpha n)
  =\frac{B_\beta(\alpha)}{\sqrt n}
   H_\beta(\alpha)^n
   \left[
   \cos\!\big(n\Theta_\beta(\alpha)+\phi_\beta(\alpha)\big)
   +O_K(n^{-1})
   \right],
\end{equation}
where
\begin{equation}\label{eq:B-explicit}
  B_\beta(\alpha)
  =\frac{2\sqrt{2\pi}}
  {\pi |1-\zeta_{\alpha,\gamma}^2|
   |\psi''_{\alpha,\gamma}(\zeta_{\alpha,\gamma})|^{1/2}}.
\end{equation}
In particular, $B_\beta$ is bounded above and away from zero on $K$.
\end{theorem}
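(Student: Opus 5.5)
The proof will start from the conjugate-contour representation of Lemma~\ref{lem:global-contour}. That lemma writes $a_n(\alpha n)$ as the sum of the integrals over $\Gamma_+(\alpha)$ and $\Gamma_-(\alpha)$ of $(\pi\ii)^{-1}A(\zeta)\e^{n\psi_{\alpha,\gamma}(\zeta)}\,d\zeta$, the real connectors having cancelled.

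The integrand is real on the positive real axis (equivalently, $\e^{q\zeta^\gamma}(1-\zeta)^{n-1}(1+\zeta)^{-n-1}$ is real-symmetric). The lower contour is the conjugate of the upper one with reversed orientation, and there is the prefactor $1/(\pi\ii)$. Together these make the lower integral the complex conjugate of the upper one. Hence $a_n(\alpha n)=2\Re I_+(n,\alpha)$, where $I_+$ is the upper integral. So it suffices to show
\[
I_+(n,\alpha)=n^{-1/2}\mathcal B_\beta(\alpha)\e^{n\psi_{\alpha,\gamma}(\zeta_{\alpha,\gamma})}+O_K(n^{-3/2}H_\beta(\alpha)^n)
\]
uniformly on $K$.

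Split $\Gamma_+(\alpha)$ into $\Gamma_+\cap U_+(\alpha)$ and its complement. On the complement, Lemma~\ref{lem:off-saddle-integral} bounds the contribution by $C_K\e^{n(\log H_\beta(\alpha)-c_K)}$. This is $o(n^{-3/2}H_\beta^n)$ uniformly.

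Inside $U_+(\alpha)$, use the uniform Morse coordinate of Lemma~\ref{lem:global-contour}, namely $\psi^+=\psi^+(\zeta_{\alpha,\gamma})-\tfrac12u_\alpha^2$, with the thimble corresponding to real $u_\alpha\in(-\eta,\eta)$ oriented from the $1$-side to the $\infty$-side. The local integral becomes
\[
\frac{\e^{n\psi(\zeta_{\alpha,\gamma})}}{\pi\ii}\int_{-\eta}^{\eta}A(\zeta(u))\,\zeta'(u)\,\e^{-nu^2/2}\,du .
\]
Expand $g_\alpha(u):=A(\zeta(u))\zeta'(u)=g_\alpha(0)+g_\alpha'(0)u+O(u^2)$. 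The remainder constant is uniform on $K$, because the coordinates form an analytic family on a fixed disk and Cauchy estimates apply. Then:
\begin{itemize}
\item the odd term integrates to zero;
\item the quadratic remainder gives $O(n^{-3/2})$;
\item extending the Gaussian to $\R$ costs $O(\e^{-n\eta^2/2})$.
\end{itemize}
This yields $\sqrt{2\pi/n}\,g_\alpha(0)$. Since $\zeta'(0)^2=-1/\psi''(\zeta_{\alpha,\gamma})$, with the square root fixed by the thimble orientation, we get $g_\alpha(0)\sqrt{2\pi}=(1-\zeta^2_{\alpha,\gamma})^{-1}(2\pi/(-\psi''))^{1/2}$, which gives \eqref{eq:complex-amplitude}. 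Smoothness and nonvanishing of $\mathcal B_\beta$ follow from the smooth (indeed analytic) dependence of $\zeta_{\alpha,\gamma}$ on $\alpha$, which is the implicit function theorem applied to \eqref{eq:saddle-eq} using nondegeneracy \eqref{eq:second-derivative}. They also use $\zeta_{\alpha,\gamma}\neq\pm1$ and the uniform bounds on $\psi''$.

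For \eqref{eq:two-saddle-cos}, write $\mathcal B_\beta=|\mathcal B_\beta|\e^{\ii\phi_\beta}$ and $\e^{n\psi}=H_\beta^n\e^{\ii n\Theta_\beta}$. Then $2\Re\{\cdot\}=2|\mathcal B_\beta|H_\beta^n\cos(n\Theta_\beta+\phi_\beta)$, and $2|\mathcal B_\beta|$ is exactly \eqref{eq:B-explicit}. The error $O_K(n^{-3/2}H^n)$ is absorbed as $B_\beta n^{-1/2}H^n\cdot O_K(n^{-1})$ because $B_\beta$ is bounded below on $K$. A smooth choice of $\phi_\beta$ exists because $\mathcal B_\beta$ is smooth and nonvanishing on an interval. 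Boundedness of $B_\beta$ above and below follows since $|1-\zeta^2|$ and $|\psi''|$ are bounded above and away from zero on the compact saddle curve.

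The main obstacle is the uniformity of the local Laplace step. The Morse coordinate, its radius $\eta$, and the second-order Taylor remainder of $g_\alpha$ must all be controlled uniformly in $\alpha\in K$. I would obtain this from the analytic Morse lemma with parameters already invoked in Lemma~\ref{lem:global-contour}, plus Cauchy estimates on a fixed disk around the compact saddle curve. A secondary point is fixing the square-root branch consistently with the orientation of $\Gamma_+$. I would do this by reading off $\arg\zeta'(0)$ as the tangent direction of the thimble at the saddle, which varies continuously in $\alpha$ and is determined by the sign conventions of Lemma~\ref{lem:thimble-topology}.
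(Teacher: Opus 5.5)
Your proposal is correct and follows essentially the same route as the paper. Like the paper, you localize via Lemmas~\ref{lem:global-contour} and~\ref{lem:off-saddle-integral}, do a Laplace expansion in the uniform Morse coordinate on a symmetric interval (the odd term vanishes, giving an $O_K(n^{-3/2}H_\beta(\alpha)^n)$ remainder), fix the square-root branch by the thimble orientation, and use conjugation of the reversed lower contour to obtain the $2\Re$ form. The only cosmetic difference is that you apply the conjugation symmetry to the whole upper integral rather than only to the local saddle contributions, which is equivalent.
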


\begin{proof}
The proof has two distinct ingredients: global localization and a local Gaussian calculation.  Lemmas~\ref{lem:global-contour} and \ref{lem:off-saddle-integral} supply the first one.  They replace the original imaginary axis by conjugate steepest-descent contours and show that the parts outside fixed saddle neighborhoods are exponentially smaller than $H_\beta(\alpha)^n$, uniformly for $\alpha\in K$.  Thus only the local geometry near the two saddles can contribute on the leading scale.

For an implementable orientation rule, write $\zeta_s=\zeta_{\alpha,\gamma}$ and $\psi''=\psi''_{\alpha,\gamma}$. Let $v_s$ be the unit tangent at the upper saddle pointing along the descending branch toward infinity. Of the two signs satisfying $v_s^2=-|\psi''(\zeta_s)|/\psi''(\zeta_s)$, choose the one whose outgoing trajectory reaches infinity. The square root in \eqref{eq:complex-amplitude} is then $\sqrt{2\pi}\,v_s/\sqrt{|\psi''(\zeta_s)|}$. Lemma~\ref{lem:thimble-topology} makes this choice unique and continuous in $\alpha$; no universal principal-root convention is assumed.

Consider the upper saddle $\zeta_s=\zeta_{\alpha,\gamma}$ and use the uniform Morse coordinate $u_\alpha$ already fixed in Lemma~\ref{lem:global-contour}.  Write $\tau:=u_\alpha(\zeta)$.  Shrinking its neighborhood uniformly if necessary, take the local thimble segment to correspond to the symmetric real interval $-\eta<\tau<\eta$, with
\[
  \tau(\zeta_s)=0,
  \qquad
  \psi_{\alpha,\gamma}(\zeta)
  =\psi_{\alpha,\gamma}(\zeta_s)-\frac12\tau^2.
\]
The orientation of the descending thimble selects the square-root branch for which
\[
  \left.\frac{d\zeta}{d\tau}\right|_{\tau=0}
  =\bigl(-\psi''_{\alpha,\gamma}(\zeta_s)\bigr)^{-1/2}.
\]
Hence the transformed prefactor
\[
  \frac{1}{\pi\ii}
  A(\zeta(\tau))\frac{d\zeta}{d\tau}
\]
is analytic in $\tau$, depends smoothly on $\alpha$, and at $\tau=0$ equals
\[
  \frac{1}{\pi\ii(1-\zeta_s^2)}
  \bigl(-\psi''_{\alpha,\gamma}(\zeta_s)\bigr)^{-1/2}.
\]
After the Gaussian rescaling $\tau=n^{-1/2}t$, its constant term therefore produces
\[
  \frac{1}{\sqrt n}\,
  \frac{1}{\pi\ii(1-\zeta_s^2)}
  \left(\frac{2\pi}{-\psi''_{\alpha,\gamma}(\zeta_s)}\right)^{1/2}
  \e^{n\psi_{\alpha,\gamma}(\zeta_s)}.
\]
The chosen local contour is symmetric in $\tau$, so odd Taylor terms integrate to zero and the next surviving term is smaller by $n^{-1}$. Uniform bounds on the local Taylor coefficients, together with the exponentially small off-saddle contribution, give an $O_K(n^{-3/2}H_\beta(\alpha)^n)$ remainder.  This identifies the upper-saddle coefficient as $\mathcal B_\beta(\alpha)$ in \eqref{eq:complex-amplitude}.

The lower contour has the reverse conjugate orientation. Its reversal cancels the sign change in $1/(\pi\ii)$ under conjugation, so its local contribution is the complex conjugate of the upper one.  Adding the two gives \eqref{eq:two-saddle-complex}.  Writing $\mathcal B_\beta=|\mathcal B_\beta|\e^{\ii\phi_\beta}$ and setting $B_\beta:=2|\mathcal B_\beta|$ gives \eqref{eq:two-saddle-cos}, while taking the modulus of \eqref{eq:complex-amplitude} yields \eqref{eq:B-explicit}.  Finally, compactness of $K$, separation from $\zeta=\pm1$, and \eqref{eq:second-derivative} show that $B_\beta$ is uniformly bounded above and away from zero.
\end{proof}

\begin{remark}\label{rem:why-shorter}
For $\alpha\in K\Subset(0,\alpha_c(\gamma))$, the saddles stay uniformly separated from singularities and coalescence, so the Gaussian scale $n^{-1/2}$ is uniform.
\end{remark}

\subsection{An index-action identity}\label{subsec:action-identity}

The next identity will control the near tail of omitted Faber coefficients in the coefficient-to-error argument.  Fix $q>0$ and, for real $x>q/\alpha_c(\gamma)$, set $\alpha_x:=q/x$ and $\zeta_x:=\zeta_{\alpha_x,\gamma}$, and define the saddle action
\begin{equation}\label{eq:action}
  S(x;q)
  :=x\,\psi_{q/x,\gamma}(\zeta_x).
\end{equation}

\begin{lemma}[Action derivative with respect to coefficient index]\label{lem:action-derivative}
For $x>q/\alpha_c(\gamma)$,
\begin{equation}\label{eq:action-derivative}
  \frac{\partial}{\partial x}S(x;q)
  =\log\frac{1-\zeta_x}{1+\zeta_x}.
\end{equation}
Consequently,
\begin{equation}\label{eq:action-decay}
  \frac{\partial}{\partial x}\Re S(x;q)
  =\log\left|\frac{1-\zeta_x}{1+\zeta_x}\right|<0.
\end{equation}
\end{lemma}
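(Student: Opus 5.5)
The plan is to differentiate the action directly and use stationarity, an envelope argument in the same spirit as \eqref{eq:hprime-early}. Write
\[
  S(x;q)=x\,\psi_{q/x,\gamma}(\zeta_x)
  =q\,\zeta_x^\gamma+x\log\frac{1-\zeta_x}{1+\zeta_x},
\]
where the branches are the principal ones near the first-quadrant saddle, as fixed after \eqref{eq:phase}. For $x>q/\alpha_c(\gamma)$ we have $\alpha_x=q/x\in(0,\alpha_c(\gamma))$. By Lemma~\ref{lem:saddle-pair} the saddle $\zeta_x$ is nondegenerate, so the implicit function theorem applied to \eqref{eq:saddle-eq} shows that $\zeta_x$ depends smoothly (indeed real-analytically) on $x$. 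Equivalently, $\theta$ in the monotone parametrization of Lemma~\ref{lem:alpha-monotone} is a smooth function of $\alpha$.

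Next, the chain rule gives
\[
  \partial_x S=\log\frac{1-\zeta_x}{1+\zeta_x}
  +\Bigl[q\gamma\zeta_x^{\gamma-1}-\frac{2x}{1-\zeta_x^2}\Bigr]\partial_x\zeta_x .
\]
The bracket equals $x\,\psi'_{\alpha_x,\gamma}(\zeta_x)$. This vanishes because \eqref{eq:saddle-eq} with $\alpha=q/x$ reads $q\gamma\zeta_x^{\gamma-1}(1-\zeta_x^2)=2x$. This proves \eqref{eq:action-derivative}. Since $S$ is defined with a fixed continuous branch along the smooth saddle curve, which stays away from $0,\pm1$ and the branch cut, no branch jump occurs. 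Taking real parts commutes with $\partial_x$, which gives the equality in \eqref{eq:action-decay}.

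It remains to show $|1-\zeta_x|<|1+\zeta_x|$, which is equivalent to $\Re\zeta_x>0$. This holds because $\zeta_x=re^{\ii\theta}$ with $0<\theta<\pi/(\gamma+1)<\pi/2$ by Lemmas~\ref{lem:alpha-monotone} and \ref{lem:saddle-pair}, so $\zeta_x$ lies in the open first quadrant. The only point needing care is the smoothness of $x\mapsto\zeta_x$ together with the consistency of the logarithm branch. Both follow from the uniform nondegeneracy \eqref{eq:second-derivative} and from the fact that the saddle stays in the open first quadrant, so I expect no genuine obstacle.
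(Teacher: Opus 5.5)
Your proposal is correct and follows the paper's proof: write $S=q\zeta_x^\gamma+x\log\frac{1-\zeta_x}{1+\zeta_x}$, differentiate, cancel the bracket with the saddle equation at $\alpha=q/x$, and use $\Re\zeta_x>0$ to get the strict inequality. Your remarks on the smoothness of $x\mapsto\zeta_x$ (via nondegeneracy) and on branch consistency only make explicit what the paper leaves implicit.
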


\begin{proof}
By definition,
\[
  S(x;q)
  =q\zeta_x^\gamma
   +x\log\frac{1-\zeta_x}{1+\zeta_x}.
\]
Differentiation gives
\begin{align*}
  \frac{\partial S}{\partial x}
  & =\log\frac{1-\zeta_x}{1+\zeta_x}
  +x\left[
     \frac{q}{x}\gamma\zeta_x^{\gamma-1}
     -\frac{2}{1-\zeta_x^2}
    \right]\frac{d\zeta_x}{dx}.
\end{align*}
The bracket vanishes by the saddle equation \eqref{eq:saddle-eq}, proving \eqref{eq:action-derivative}.  Since $\Re\zeta_x>0$, one has $|1-\zeta_x|<|1+\zeta_x|$, proving \eqref{eq:action-decay}.
\end{proof}

Identity \eqref{eq:action-decay} gives geometric tail decay while the effective ratio $q/x$ stays in a compact interval away from zero.

\section{From Faber coefficients to pole selection}\label{sec:error-transfer}

The coefficient theorem tells us the size of a typical omitted Faber coefficient, but the best uniform error depends on an entire tail and the coefficients oscillate.  This section closes that gap.  A remote-tail bound controls indices far from $m$; near $m$, a two-coefficient argument prevents complete cancellation.  Once these pieces are combined, the rate function can be optimized directly.

We return to degree $m$ and choose $q=\alpha m$ with $0<\alpha<\alpha_c(\gamma)$.  We transfer the oscillatory coefficient expansion to $E_{m,\beta}(\alpha m)$.  The saddle theorem controls the finite-ratio tail, while a separate estimate handles small effective ratios $q/n$, avoiding a fixed-to-moving saddle analysis.

\subsection{A crude bound for small effective ratios}\label{subsec:crude-tail}
The two-saddle theorem is uniform only while the effective ratio stays in a fixed compact subcritical interval.  Very remote tail coefficients have much smaller ratios, so we dispose of them first by a deliberately crude estimate; sharp constants are unnecessary there.

\begin{lemma}[Remote-tail coefficient bound]\label{lem:remote-bound}
There exist $\delta_\gamma,c_\gamma,C_\gamma>0$, depending only on $\gamma$, such that
\begin{equation}\label{eq:small-effective-ratio}
  0<\frac qn\le\delta_\gamma
\end{equation}
implies
\begin{equation}\label{eq:remote-bound}
  |a_n(q)|
  \le C_\gamma
  \exp\!\left[-c_\gamma q^{1/(\gamma+1)}n^{\gamma/(\gamma+1)}\right].
\end{equation}
\end{lemma}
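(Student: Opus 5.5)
The plan is to deform the imaginary axis in the exact representation \eqref{eq:coeff-exact} onto a contour made of a large right semicircle and the two outer imaginary rays. The radius is chosen at the natural small-ratio scale of the saddle. For $\alpha=q/n\to0$, the saddle equation \eqref{eq:saddle-eq} gives $\alpha\gamma\zeta^{\gamma+1}\approx-2$, so $|\zeta_{\alpha,\gamma}|\asymp(n/q)^{1/(\gamma+1)}$. Near it, $n\log|(1-\zeta)/(1+\zeta)|\approx-2n\Re(1/\zeta)$ and $q\Re\zeta^\gamma$ are both of size $q^{1/(\gamma+1)}n^{\gamma/(\gamma+1)}$, which is exactly the exponent in \eqref{eq:remote-bound}. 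We do not follow the saddle itself. Instead we set
\[
  R=\lambda\,(n/q)^{1/(\gamma+1)},
\]
with a small constant $\lambda=\lambda_\gamma>0$ fixed below. The new contour is: $\ii\R$ with $|y|\ge R$, together with the right semicircle $\zeta=R\e^{\ii\varphi}$, $|\varphi|\le\pi/2$. The region between this contour and the original axis is the right half-disk of radius $R$. There the exact integrand is analytic: $\zeta^\gamma$ is principal, $\zeta=-1$ lies outside, and $(1-\zeta)^{n-1}$ is regular at $1$. So Cauchy's theorem justifies the deformation, with a vanishing indentation at $0$ as in Lemma~\ref{lem:global-contour}.

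\emph{Rays.} On $\zeta=\ii y$ we have $|(1-\zeta)/(1+\zeta)|=1$, and the proof of Lemma~\ref{lem:imag-axis} gives $\Re(\ii y)^\gamma=-|y|^\gamma\cos(\beta/2)$. Hence the ray contribution is at most
\[
  \int_{|y|\ge R}\e^{-q\cos(\beta/2)|y|^\gamma}\frac{dy}{1+y^2}
  \le \frac{2}{R}\,\e^{-q\cos(\beta/2)R^\gamma},
\]
and $qR^\gamma=\lambda^\gamma q^{1/(\gamma+1)}n^{\gamma/(\gamma+1)}$.

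\emph{Semicircle.} Here $|A(\zeta)|\le 2R^{-2}$ once $R\ge2$, and the length is $\pi R$. Expand
\[
  \log\left|\frac{1-\zeta}{1+\zeta}\right|=-\frac{2\cos\varphi}{R}+O(R^{-3}).
\]
Writing $\Lambda:=q^{1/(\gamma+1)}n^{\gamma/(\gamma+1)}$, the real exponent becomes
\[
  \Lambda\Bigl[\lambda^\gamma\cos(\gamma\varphi)-\tfrac{2}{\lambda}\cos\varphi\Bigr]+O(nR^{-3}).
\]
We need the bracket to be at most $-c<0$ uniformly in $|\varphi|\le\pi/2$. Split the range of $\varphi$.
\begin{itemize}
  \item Where $\cos(\gamma\varphi)\le0$, the bracket is at most $\lambda^\gamma\cos(\gamma\varphi)$. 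At the endpoints $\varphi=\pm\pi/2$ this equals $-\lambda^\gamma\cos(\beta/2)<0$, and elsewhere $\cos\varphi$ is bounded below, so the bracket is uniformly negative on this range.
  \item Where $|\varphi|<\pi/(2\gamma)$, we have $\cos\varphi\ge\cos(\pi/(2\gamma))>0$ because $\gamma>1$. Choosing $\lambda^{\gamma+1}<\cos(\pi/(2\gamma))$ makes the bracket negative there.
\end{itemize}
This endpoint matching, where the logarithmic decay degenerates as $\varphi\to\pm\pi/2$ and must be compensated by $\cos(\gamma\varphi)<0$, is the delicate step. It is exactly why an arc meeting the imaginary axis, rather than a ray in the right half-plane, is used.

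\emph{Error term.} It remains to absorb $O(nR^{-3})$. Since $nR^{-3}=\lambda^{-3}\Lambda\,(q/n)^{2/(\gamma+1)}$, it is at most $\tfrac c2\Lambda$ once $q/n\le\delta_\gamma$ is small enough. The same smallness makes $R\ge\lambda\delta_\gamma^{-1/(\gamma+1)}\ge2$. The semicircle thus contributes at most $C R^{-1}\e^{-(c/2)\Lambda}$. Combining this with the ray bound gives \eqref{eq:remote-bound}, with $c_\gamma=\min(c/2,\lambda^\gamma\cos(\beta/2))$. All constants depend only on $\gamma$, since $\beta=\pi(2-\gamma)$, and no lower bound on $q$ is required.
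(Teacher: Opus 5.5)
Your proof is correct. It uses the same basic mechanism as the paper but a different contour. The shared idea is to push the contour a distance $\asymp(n/q)^{1/(\gamma+1)}$ into the right half-plane, so that the M\"obius factor contributes about $-n/R$ while $\e^{q\zeta^\gamma}$ contributes at most $qR^\gamma$, with a small constant tipping the balance toward decay.

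\textbf{The paper's contour.} It shifts the entire axis to the vertical line $\Re\zeta=a$, parametrized as $\zeta=a(1+\ii t)$. It then uses two exact inequalities: $\log|(1-\zeta)/(1+\zeta)|\le -c_2/(a(1+t^2))$ and $\Re(1+\ii t)^\gamma\le C_1-c_1|t|^\gamma$. The degenerate regime is $|t|\to\infty$, where the M\"obius decay fades. It is controlled by the $\chi_\kappa^{2/(\gamma+2)}$ balance, and the paper must also dispose of horizontal closing segments at infinity. Because both inequalities are exact, smallness of $q/n$ is used only to make $a\ge2$.

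\textbf{Your contour.} You keep the outer rays $|y|\ge R$ on $\ii\R$ and bend only a compact semicircle.
\begin{itemize}
\item On the rays the M\"obius factor is unimodular and the decay $\e^{-q\cos(\beta/2)|y|^\gamma}$ is exact, so you get an explicit ray constant and need no closing segments.
\item The degenerate regime becomes the angle window near $\varphi=\pm\pi/2$. The required negativity reduces to a trigonometric check: $\cos(\gamma\varphi)\le0$ for $\pi/(2\gamma)\le|\varphi|\le\pi/2$ (using $\gamma\le2$), and $\cos\varphi\ge\cos(\pi/(2\gamma))>0$ on the remaining angles.
\item The price is the $O(nR^{-3})$ remainder from your asymptotic expansion of the log-modulus, which needs extra smallness of $q/n$. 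That is harmless here because $\delta_\gamma$ is free. You could also avoid it entirely with the exact bound $\log|(1-\zeta)/(1+\zeta)|\le -2R\cos\varphi/(1+R)^2\le -\cos\varphi/(2R)$ for $R\ge1$; the same case split then goes through with no error term.
\end{itemize}

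\textbf{One wording fix.} In your first case, ``elsewhere $\cos\varphi$ is bounded below'' is not literally true near $\pm\pi/2$. State it as a compactness argument instead. On $\pi/(2\gamma)\le|\varphi|\le\pi/2$ both terms of the bracket are nonpositive and continuous. They never vanish simultaneously, because $\pi/(2\gamma)<\pi/2$. Hence the bracket has a strictly negative maximum there.
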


\begin{proof}
The estimate comes from moving the contour far enough into the right half-plane that the rational factor supplies strong decay, but not so far that $\e^{q\zeta^\gamma}$ becomes too large.  On the line $\Re\zeta=a$, these two competing exponents have sizes $qa^\gamma$ and $n/a$.  Balancing them suggests $a\asymp(n/q)^{1/(\gamma+1)}$ and therefore the stretched-exponential scale $q^{1/(\gamma+1)}n^{\gamma/(\gamma+1)}$ appearing in \eqref{eq:remote-bound}.  The small constant $\kappa$ below tilts this balance toward decay.

Use the principal branch of $\zeta^\gamma$ in the right half-plane.  Split the contour in \eqref{eq:coeff-exact} at $0$, bypass the branch point by a right semicircle of radius $\varepsilon$, and shift the two pieces to $\Re\zeta=a$ with $a=\kappa(n/q)^{1/(\gamma+1)}$ and fixed sufficiently small $\kappa>0$.  The indentation is $O(\varepsilon)$ and disappears as $\varepsilon\downarrow0$.  No pole is crossed: $\zeta=1$ is regular for the exact integrand (and is a zero when $n\ge2$), while $\zeta=-1$ lies to the left.  The horizontal closing segments vanish uniformly as their height tends to infinity. Indeed, there are $M_\gamma,c_{\gamma,0}>0$ such that, whenever $|\Im\zeta|\ge M_\gamma a$ and $0\le\Re\zeta\le a$, the argument of $\zeta$ lies in a fixed neighborhood of $\pm\pi/2$ on which
\[
  \Re(\zeta^\gamma)\le-c_{\gamma,0}|\Im\zeta|^\gamma.
\]
Thus the exponential factor decays uniformly along the closing segments. Choose $\delta_\gamma$ so that \eqref{eq:small-effective-ratio} implies $a\ge2$.

Put $\zeta=a(1+\ii t)$.  Since $1<\gamma\le2$, constants $c_1,C_1>0$ exist such that
\begin{equation}\label{eq:power-upper}
  \Re(1+\ii t)^\gamma\le C_1-c_1|t|^\gamma,
  \qquad t\in\R.
\end{equation}
Furthermore,
\begin{align}\label{eq:mobius-modulus-bound}
  \log\left|\frac{1-\zeta}{1+\zeta}\right|
  &=\frac12\log\left(1-\frac{4a}{(1+a)^2+a^2t^2}\right) \le -\frac{c_2}{a(1+t^2)}
\end{align}
for a numerical $c_2>0$.  With $L=n/a$ one has $qa^\gamma=\kappa^{\gamma+1}L$, so the real part of the full exponent is at most
\begin{equation}\label{eq:scaled-far-exponent}
  L\left[
  C_1\kappa^{\gamma+1}
  -c_1\kappa^{\gamma+1}|t|^\gamma
  -\frac{c_2}{1+t^2}
  \right].
\end{equation}
For sufficiently small fixed $\kappa$, the bracket is bounded above by a negative constant uniformly in $t$.  Indeed, if $\chi_\kappa:=\kappa^{\gamma+1}$, the negative part contains
\[
  c_1\chi_\kappa|t|^\gamma+\frac{c_2}{1+t^2};
\]
it is bounded below by $c_2/2$ for $|t|\le1$, while for $|t|\ge1$ its minimum is of order $\chi_\kappa^{2/(\gamma+2)}$, which dominates the positive $O(\chi_\kappa)$ term as $\chi_\kappa\downarrow0$.

Finally $|d\zeta/(1-\zeta^2)|\le C[a(1+t^2)]^{-1}dt$, so integration gives $|a_n(q)|\le C\exp(-cL)$.  Since $L=\kappa^{-1}q^{1/(\gamma+1)}n^{\gamma/(\gamma+1)}$, \eqref{eq:remote-bound} follows after changing constants.
\end{proof}

\subsection{From coefficients to best error}\label{subsec:faber-sandwich}
The best error is trapped between two simple objects: a Faber projection gives an explicit upper bound, while any omitted coefficient gives a lower bound. The remaining issue is to choose coefficients that do not accidentally cancel.

For fixed $q>0$, define $a_0(q)$ by the same boundary integral as in \eqref{eq:faber-coeff-xi} with $n=0$.  The boundary coefficients then define the Faber series
\begin{equation}\label{eq:faber-series}
  g_q(w)=a_0(q)+\sum_{n=1}^\infty a_n(q)F_n(w).
\end{equation}
For fixed $q$, Lemma~\ref{lem:remote-bound} and $\|F_n\|_{K_\beta}\le2$ imply absolute uniform convergence of this series. The standard Faber reconstruction theorem of \citet[Lemma~4, p.~201]{KovariPommerenke1967} applies: $K_\beta$ is a closed Jordan domain with rectifiable boundary when $0<\beta<\pi$, and $g_q$ is analytic in its interior and continuous on its closure. With $F_n$ normalized as the polynomial part of $\Phi_\beta^n$, the boundary functional in~\eqref{eq:faber-coeff-xi} is the corresponding standard Faber-coefficient normalization. This reconstruction lemma requires only the uniform convergence already proved; no boundary convergence rate is imported. For $\beta=0$, uniqueness of the Fourier--Chebyshev expansion gives the same conclusion; see also \citet[Sec.~5]{Curtiss1971} for the Faber--Fourier connection. Define $\mathcal F_mg_q:=a_0(q)+\sum_{n=1}^m a_n(q)F_n$.

\begin{proposition}[Faber error sandwich]\label{prop:faber-sandwich}
For every $q>0$ and $m\ge0$,
\begin{equation}\label{eq:lower-coeff}
  E_{m,\beta}(q)\ge |a_n(q)|,
  \qquad n\ge m+1,
\end{equation}
and
\begin{equation}\label{eq:upper-tail}
  E_{m,\beta}(q)
  \le\|g_q-\mathcal F_mg_q\|_{K_\beta}
  \le2\sum_{n=m+1}^\infty|a_n(q)|.
\end{equation}
\end{proposition}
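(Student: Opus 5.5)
The upper bound \eqref{eq:upper-tail} is the easy half. $\mathcal F_mg_q$ is a polynomial of degree at most $m$, so it is admissible in \eqref{eq:best-error}; this gives the first inequality. By the Faber series representation \eqref{eq:faber-series}, which converges absolutely and uniformly on $K_\beta$ for fixed $q$, we have $g_q-\mathcal F_mg_q=\sum_{n>m}a_n(q)F_n$ uniformly on $K_\beta$. The Faber bound \eqref{eq:faber-bound} then gives the second inequality term by term. If the tail sum diverges, nothing needs proving; for fixed $q$ it converges anyway by Lemma~\ref{lem:remote-bound}.

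For the lower bound \eqref{eq:lower-coeff}, I would show that each boundary coefficient functional $\ell_n(f):=\frac1{2\pi\ii}\int_{|\xi|=1}f(\Psi_\beta(\xi))\xi^{-n-1}d\xi$ has norm at most $1$ on $C(K_\beta)$ and annihilates $\Pm_{n-1}$. The annihilation property is already recorded after \eqref{eq:faber-coeff-xi}. For any $p\in\Pm_m$ and $n\ge m+1$ this gives $a_n(q)=\ell_n(g_q)=\ell_n(g_q-p)$.

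The norm estimate is the key point. Since $\Psi_\beta$ extends continuously to the unit circle as the boundary correspondence onto $\partial K_\beta$ (Carathéodory, since $K_\beta$ is a Jordan domain for $\beta>0$; for $\beta=0$ it is the Joukowski map onto $[-1,1]$), we have $|f(\Psi_\beta(\xi))|\le\|f\|_{K_\beta}$ on $|\xi|=1$. Hence $|\ell_n(f)|\le\frac1{2\pi}\int_{|\xi|=1}\|f\|_{K_\beta}|d\xi|=\|f\|_{K_\beta}$. Applying this with $f=g_q-p$ gives $|a_n(q)|\le\|g_q-p\|_{K_\beta}$, and taking the infimum over $p$ yields \eqref{eq:lower-coeff}.

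The only delicate point is the boundary point $\xi=-1$, where $g_q\circ\Psi_\beta=\e^{q\zeta^\gamma}$ is defined by a boundary limit. I would handle it by noting that $g_q$ is continuous on $K_\beta$ with $g_q(-1)=0$, so $g_q-p$ is continuous on the compact set $K_\beta$. The integrand is therefore bounded on the circle and the integral converges absolutely. The identity $\ell_n(g_q-p)=\ell_n(g_q)$ uses linearity together with the fact that for polynomials the boundary integral equals the exterior Laurent coefficient (Cauchy's theorem moving the circle to $|\xi|=R$). Both are routine, so I expect no real obstacle beyond carefully justifying the continuous boundary extension of $\Psi_\beta$, which follows from the explicit formula \eqref{eq:Psi}.
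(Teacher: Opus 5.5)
Your proposal is correct and follows the paper's proof: the upper bound comes from the Faber partial sum together with $\|F_n\|_{K_\beta}\le2$, and the lower bound from the boundary coefficient functional, which has norm at most one and annihilates $\Pm_{n-1}$, applied to $g_q-p$. The extra justifications you supply are sound refinements of steps the paper states without detail: continuity of $\Psi_\beta$ up to $|\xi|=1$ with $\Psi_\beta(-1)=-1$, the value $g_q(-1)=0$, and Cauchy's theorem identifying the boundary integral with the Laurent coefficient for polynomials.
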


\begin{proof}
The boundary functional in \eqref{eq:faber-coeff-xi} has norm at most one in the supremum norm.  It annihilates every polynomial of degree $<n$: for such a polynomial $p$, the Laurent expansion of $p(\Psi_\beta(\xi))$ at infinity has no positive power $\xi^n$.  Applying this functional to $g_q-p$ therefore gives, for $p\in\Pm_m$ and $n\ge m+1$,
\[
  |a_n(q)|\le\|g_q-p\|_{K_\beta},
\]
and taking the infimum gives \eqref{eq:lower-coeff}.  The upper estimate follows from the uniformly convergent Faber expansion and \eqref{eq:faber-bound}.
\end{proof}

\medskip\noindent\textit{Summation of the coefficient tail.}\label{subsec:tail-sum}
We next sum the near tail on the correct exponential scale.  The action identity below shows that moving the coefficient index changes the saddle exponent in a controlled monotone way.

Let $K=[\alpha_-,\alpha_+]\Subset(0,\alpha_c(\gamma))$.  For $q=\alpha m$ and real $x\ge m$, recall $S(x;q)$ from \eqref{eq:action}.  If $x\le Tm$ with fixed $T>1$, then
\[
  \frac qx\in\left[\frac{\alpha_-}{T},\alpha_+\right]\Subset(0,\alpha_c(\gamma)),
\]
so Theorem~\ref{thm:two-saddle} applies uniformly.

\begin{lemma}[Uniform Faber-tail bound]\label{lem:tail-bound}
Let $K\Subset(0,\alpha_c(\gamma))$.  There exists $C_K>0$ such that, uniformly for $\alpha\in K$,
\begin{equation}\label{eq:tail-bound}
  \sum_{n=m+1}^\infty|a_n(\alpha m)|
  \le C_Km^{-1/2}H_\beta(\alpha)^m
\end{equation}
for all sufficiently large $m$.
\end{lemma}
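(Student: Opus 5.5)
We split the tail at $n=Tm$ and at $n=m/\delta$: the near part uses the uniform saddle asymptotics, the far part uses the crude remote-tail bound. Fix $T>1$ large, to be chosen, and split
\[
  \sum_{n>m}|a_n(\alpha m)|
  =\sum_{m<n\le Tm}+\sum_{Tm<n\le m/\delta_\gamma}+\sum_{n>m/\delta_\gamma}.
\]
For the third sum the effective ratio satisfies $q/n\le\delta_\gamma$, so Lemma~\ref{lem:remote-bound} gives $|a_n|\le C_\gamma\exp[-c_\gamma(\alpha m)^{1/(\gamma+1)}n^{\gamma/(\gamma+1)}]$. For $n>m/\delta_\gamma$ the exponent is at least $c\,m^{1/(\gamma+1)}n^{\gamma/(\gamma+1)}\ge c'n$ with $c'$ of order $\delta_\gamma^{-1/(\gamma+1)}$. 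Summing gives $O(\e^{-c''m})$, where $c''\asymp\delta_\gamma^{-1}\cdot$(const). This is $o(m^{-1/2}H_\beta(\alpha)^m)$ provided $\e^{-c''}<\min_K H_\beta$. If that fails, shrink $\delta_\gamma$, which is allowed in Lemma~\ref{lem:remote-bound} since the bound only improves. Because $c_\gamma$ is independent of $\delta$, the stretched exponent at $n=m/\delta$ grows like $\delta^{-\gamma/(\gamma+1)}m$.

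For the near sum, write $a_n(\alpha m)=a_n(\alpha_n n)$ with $\alpha_n=\alpha m/n\in[\alpha_-/T',\alpha_+]=:K'$, a fixed compact subset of $(0,\alpha_c)$. Theorem~\ref{thm:two-saddle} on $K'$ then gives
\[
  |a_n|\le C_{K'}n^{-1/2}\exp(\Re S(n;\alpha m)).
\]
Here $T'$ is the largest multiplier used; it can be taken equal to $1/\delta_\gamma$ if we simply set $T:=1/\delta_\gamma$ and merge the first two sums, so the middle sum disappears. Next apply Lemma~\ref{lem:action-derivative}. Since $\partial_x\Re S=\log|(1-\zeta_x)/(1+\zeta_x)|$ and $\zeta_x$ ranges over the compact saddle set for ratios in $K'$, which stays away from the imaginary axis, this derivative is $\le-\kappa_K<0$ uniformly. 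Hence
\[
  \Re S(n;\alpha m)\le\Re S(m;\alpha m)-\kappa_K(n-m)=m\log H_\beta(\alpha)-\kappa_K(n-m).
\]
Summing the geometric series and using $n^{-1/2}\le m^{-1/2}$ gives $C_Km^{-1/2}H_\beta(\alpha)^m$.

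The main obstacle is compatibility of the two regimes: making the compact ratio set $K'=[\alpha_-\delta_\gamma,\alpha_+]$ uniformly subcritical while ensuring the remote bound beats $H_\beta^m$. The first requirement is automatic since $\alpha_-\delta_\gamma>0$. The second needs the explicit dependence of the remote exponent on $\delta$, which is superlinear in $m/\delta$ as noted. If the constants in Lemma~\ref{lem:remote-bound} are not cleanly independent of $\delta_\gamma$, I would instead insert a middle range $Tm<n\le m/\delta_\gamma$. There I would continue using the monotone action bound: the two-saddle theorem still applies on the compact set $[\alpha_-\delta_\gamma,\alpha_+]$, so the merged argument above covers it anyway.
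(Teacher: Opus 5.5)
Your proposal follows the paper's proof. For $m<n\le Tm$ you apply Theorem~\ref{thm:two-saddle} on the compact ratio set $[\alpha_-/T,\alpha_+]$ together with the monotone action bound from Lemma~\ref{lem:action-derivative}. Beyond $Tm$ you use Lemma~\ref{lem:remote-bound}, and you push the cutoff outward until the remote sum is $o(H_\beta(\alpha)^m)$ uniformly on $K$; your ``shrink $\delta_\gamma$'' is the paper's ``increase $T$''. The near-range part is correct as written. Two steps in the far range need repair, and neither is fatal.

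First, the inequality $m^{1/(\gamma+1)}n^{\gamma/(\gamma+1)}\ge c'n$ for $n>m/\delta_\gamma$ is backwards. There $m<\delta_\gamma n$, so this quantity equals $n(m/n)^{1/(\gamma+1)}<\delta_\gamma^{1/(\gamma+1)}n$. It is sublinear in $n$, so the far tail is not a geometric series. What you can use instead:
\begin{itemize}
\item[(i)] the lower bound $m^{1/(\gamma+1)}n^{\gamma/(\gamma+1)}\ge\delta_\gamma^{-\gamma/(\gamma+1)}m$, valid at and beyond the cutoff;
\item[(ii)] an integral comparison for the decreasing function $x\mapsto\e^{-Ax^{\gamma/(\gamma+1)}}$.
\end{itemize}
Together these give a bound $C\exp(-c\,\delta_\gamma^{-\gamma/(\gamma+1)}m)$ with a prefactor independent of $m$, as in \eqref{eq:remote-tail-sum}. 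The correct scaling is the $\delta_\gamma^{-\gamma/(\gamma+1)}$ you mention at the end of that paragraph, not $\delta_\gamma^{-1}$. Since it still diverges as $\delta_\gamma\downarrow0$, your conclusion stands.

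Second, for $n>m/\delta_\gamma$ you only know $q/n<\alpha\delta_\gamma$. By \eqref{eq:astar-range}, $\alpha_c(\gamma)>1$, so $\alpha\in K$ may exceed $1$ and Lemma~\ref{lem:remote-bound} need not apply. Cut instead at $n>\alpha_+m/\delta_\gamma$; this is exactly the paper's condition $\alpha_+/T\le\delta_\gamma$.

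Your fallback paragraph is unnecessary. Lemma~\ref{lem:remote-bound} holds with the same $c_\gamma,C_\gamma$ for every ratio at most $\delta_\gamma$. Moving the cutoff outward therefore never changes its constants, and you need no middle range.
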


\begin{proof}
Choose $T>1$ later.  On $m\le x\le Tm$, Lemma~\ref{lem:action-derivative} gives
\[
  \frac{d}{dx}\Re S(x;\alpha m)
  =\log\left|\frac{1-\zeta_{\alpha m/x,\gamma}}
                    {1+\zeta_{\alpha m/x,\gamma}}\right|.
\]
The effective ratios lie in a compact subinterval of $(0,\alpha_c(\gamma))$, so the right-hand side is at most $\log r_{K,T}$ for some $r_{K,T}<1$.  Therefore
\begin{equation}\label{eq:action-geometric}
  \Re S(n;\alpha m)
  \le m\log H_\beta(\alpha)+(n-m)\log r_{K,T},
  \qquad m\le n\le Tm.
\end{equation}
Theorem~\ref{thm:two-saddle}, used with coefficient index $n$ and effective ratio $\alpha m/n$, then yields
\begin{equation}\label{eq:finite-tail-bound}
  |a_n(\alpha m)|
  \le C_Km^{-1/2}H_\beta(\alpha)^m r_{K,T}^{\,n-m},
  \qquad m<n\le Tm,
\end{equation}
and the finite-ratio tail is summable geometrically.

Now choose $T$ so large that $\alpha_+/T\le\delta_\gamma$.  For $n>Tm$, Lemma~\ref{lem:remote-bound} gives
\[
  |a_n(\alpha m)|
  \le C_\gamma\exp\!\left[-c_\gamma(\alpha m)^{1/(\gamma+1)}n^{\gamma/(\gamma+1)}\right].
\]
An integral comparison for functions of the form $e^{-c x^p}$, with $c>0$ and $p=\gamma/(\gamma+1)$, shows that after decreasing $c_\gamma$ if necessary,
\begin{equation}\label{eq:remote-tail-sum}
  \sum_{n>Tm}|a_n(\alpha m)|
  \le C\exp\!\left[-c\alpha_-^{1/(\gamma+1)}T^{\gamma/(\gamma+1)}m\right].
\end{equation}
By Lemma~\ref{lem:H-less-one}, $\kappa_K:=\max_{\alpha\in K}\{-\log H_\beta(\alpha)\}<\infty$. Increasing $T$ once more so that the exponent in \eqref{eq:remote-tail-sum} exceeds $(\kappa_K+1)m$, the remote tail is $O(e^{-m}H_\beta(\alpha)^m)$ uniformly on $K$ and is absorbed into \eqref{eq:tail-bound}.
\end{proof}

\medskip\noindent\textit{A fixed two-coefficient noncancellation bound.}\label{subsec:noncancellation}
A single leading coefficient can be small because the conjugate saddle terms have nearly opposite phases.  Two consecutive coefficients, however, cannot both have that cancellation uniformly on a compact subcritical parameter set.  This elementary observation supplies the lower bound needed for best approximation.

Define
\begin{equation}\label{eq:omega}
  \omega_\beta(\alpha)
  :=\Arg\left(\frac{1-\zeta_{\alpha,\gamma}}
                    {1+\zeta_{\alpha,\gamma}}\right)\in(-\pi,0).
\end{equation}
On compact $K\Subset(0,\alpha_c(\gamma))$, this angle stays a positive distance from both endpoints.

\begin{lemma}[Two neighboring coefficients cannot both cancel]\label{lem:two-coeff}
Let $K\Subset(0,\alpha_c(\gamma))$.  There exists $c_K>0$ such that, uniformly for $\alpha\in K$ and all sufficiently large $m$,
\begin{equation}\label{eq:two-coeff-lower}
  \max\{|a_{m+1}(\alpha m)|,|a_{m+2}(\alpha m)|\}
  \ge c_Km^{-1/2}H_\beta(\alpha)^m.
\end{equation}
\end{lemma}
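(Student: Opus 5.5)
We apply Theorem~\ref{thm:two-saddle} to the two indices $n=m+1$ and $n=m+2$, with the effective ratios $\alpha_j:=\alpha m/(m+j)$ for $j=1,2$. These lie in a slightly enlarged compact set $K'\Subset(0,\alpha_c(\gamma))$ for all large $m$. The plan is to rewrite both expansions around the common parameter $\alpha$ with $O(m^{-1})$ errors. Then we show that the two leading cosines cannot both be small, because consecutive indices shift the oscillation phase by the fixed angle $\omega_\beta(\alpha)$, which stays away from $0$ and $-\pi$.

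First, the exponent. By Lemma~\ref{lem:action-derivative},
\[
  (m+j)\,\psi_{\alpha_j,\gamma}(\zeta_{\alpha_j,\gamma})=S(m+j;\alpha m)=S(m;\alpha m)+\int_m^{m+j}\log\frac{1-\zeta_x}{1+\zeta_x}\,dx .
\]
Since $\zeta_x$ moves by $O(m^{-1})$ for $x\in[m,m+2]$, uniformly on $K$, this gives
\[
  S(m+j;\alpha m)=m\psi_{\alpha,\gamma}(\zeta_{\alpha,\gamma})+j\log\frac{1-\zeta_{\alpha,\gamma}}{1+\zeta_{\alpha,\gamma}}+O_K(m^{-1}).
\]
Second, the amplitude. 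Smoothness of $\mathcal B_\beta$ gives $\mathcal B_\beta(\alpha_j)=\mathcal B_\beta(\alpha)+O_K(m^{-1})$, and $(m+j)^{-1/2}=m^{-1/2}(1+O(m^{-1}))$. Writing $\rho:=|(1-\zeta_{\alpha,\gamma})/(1+\zeta_{\alpha,\gamma})|\in(0,1)$ and inserting these into \eqref{eq:two-saddle-complex}, we obtain, for $j=1,2$,
\[
  a_{m+j}(\alpha m)=\frac{B_\beta(\alpha)\rho^j}{\sqrt m}H_\beta(\alpha)^m\bigl[\cos(\Lambda_m+j\,\omega_\beta(\alpha))+O_K(m^{-1})\bigr],
  \qquad \Lambda_m:=m\Theta_\beta(\alpha)+\phi_\beta(\alpha).
\]
On $K$, the quantities $\rho$ and $B_\beta$ are bounded above and away from zero.

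The key step is an elementary trigonometric bound: for every real $\Lambda$,
\[
  \max_{j=1,2}|\cos(\Lambda+j\omega)|\ge c(\omega)>0 .
\]
To see this, suppose both cosines vanish. Then $\Lambda+\omega$ and $\Lambda+2\omega$ both lie in $\pi/2+\pi\mathbb Z$, so $\omega\in\pi\mathbb Z$. This is impossible because $\omega_\beta(\alpha)\in(-\pi,0)$. The function $(\Lambda,\omega)\mapsto\max_j|\cos(\Lambda+j\omega)|$ is continuous, $2\pi$-periodic in $\Lambda$, and positive. The values of $\omega_\beta$ on $K$ form a compact subset of $(-\pi,0)$, by continuity of the saddle in $\alpha$ and the remark after \eqref{eq:omega}. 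Hence the minimum of this function over $\Lambda\in[0,2\pi]$ and $\omega\in\omega_\beta(K)$ is a positive constant $c_0$.

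For $m$ so large that the $O_K(m^{-1})$ terms are below $c_0/2$, we conclude
\[
  \max_{j=1,2}|a_{m+j}(\alpha m)|\ge \tfrac12 c_0\min_K(B_\beta\rho^2)\,m^{-1/2}H_\beta(\alpha)^m ,
\]
which is \eqref{eq:two-coeff-lower}.

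The main obstacle is the bookkeeping in the first step, namely ensuring that the phase shift between the two coefficients is exactly $\omega_\beta(\alpha)$ up to $o(1)$. A naive substitution $\alpha\mapsto\alpha_j$ inside $n\psi$ produces an $O(1)$ change that must be computed, not discarded. The action identity handles this cleanly, since the envelope (stationarity) cancellation is already built into it, and its uniformity on $K$ rests on Theorem~\ref{thm:two-saddle} holding uniformly on $K'$.
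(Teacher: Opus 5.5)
Your proposal is correct and follows essentially the same route as the paper. Both arguments apply Theorem~\ref{thm:two-saddle} at indices $m+1,m+2$ with effective ratios $\alpha m/(m+j)$, and both use the action identity of Lemma~\ref{lem:action-derivative} to show two things: the oscillatory phases differ by $\omega_\beta(\alpha)+O_K(m^{-1})$, and the envelopes remain comparable to $m^{-1/2}H_\beta(\alpha)^m$. Both then conclude because $\omega_\beta(K)$ is a compact subset of $(-\pi,0)$.

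The only difference is cosmetic. The paper makes the trigonometric step explicit through $\cos^2\varphi+\cos^2(\varphi+\omega)\ge1-|\cos\omega|$, whereas you obtain the uniform constant by a continuity--compactness argument, which is equally valid.
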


\begin{proof}
Apply Theorem~\ref{thm:two-saddle} with coefficient index $n=m+j$, $j=1,2$, and effective ratio $\alpha_{m,j}=\alpha m/(m+j)=\alpha+O_K(m^{-1})$. With $q=\alpha m$ and the action $S(x;q)$ from \eqref{eq:action}, separate the envelope from the oscillatory phase $\Im S(m+j;q)+\phi_\beta(\alpha_{m,j})$. The real action changes by only $O_K(1)$ when $x$ moves from $m$ to $m+2$, so both leading envelopes are uniformly comparable with $m^{-1/2}H_\beta(\alpha)^m$.

The decisive point is the phase increment.  By Lemma~\ref{lem:action-derivative},
\begin{align*}
  \Im\{S(m+2;q)-S(m+1;q)\}
  &=\int_{m+1}^{m+2}
    \Arg\!\left(\frac{1-\zeta_{q/x,\gamma}}
                       {1+\zeta_{q/x,\gamma}}\right)dx \\
  &=\omega_\beta(\alpha)+O_K(m^{-1}),
\end{align*}
because $q/x=\alpha+O_K(m^{-1})$ throughout this unit interval.  The smooth amplitude phase contributes only another $O_K(m^{-1})$.  Hence the two leading cosines are separated by
\begin{equation}\label{eq:phase-increment}
  \omega_\beta(\alpha)+O_K(m^{-1}).
\end{equation}
For real $\varphi,\omega$, $\cos^2\varphi+\cos^2(\varphi+\omega)\ge1-|\cos\omega|$.  On the compact set $K$, the angle $\omega_\beta(\alpha)$ stays a positive distance from $0$ and $-\pi$, so $\sup_{\alpha\in K}|\cos\omega_\beta(\alpha)|<1$.  Therefore one of the two leading cosine factors is uniformly bounded away from zero.  The normalized $O(m^{-1})$ remainder in Theorem~\ref{thm:two-saddle} is then absorbed.  This argument never divides by a single cosine that may itself be close to zero; that is exactly why two neighboring coefficients are used.
\end{proof}

\medskip\noindent\textit{Prefactor-resolved best-error bounds.}\label{subsec:best-error-strong}
We can now combine the upper and lower mechanisms.  The result keeps the Gaussian $m^{-1/2}$ scale, not only the $m$th-root factor.

\begin{theorem}[Prefactor-resolved order of the best repeated-pole error]\label{thm:best-error}
Let $K\Subset(0,\alpha_c(\gamma))$.  There are $0<c_K\le C_K<\infty$ such that, uniformly for $\alpha\in K$,
\begin{equation}\label{eq:best-error-strong}
  c_Km^{-1/2}H_\beta(\alpha)^m
  \le E_{m,\beta}(\alpha m)
  \le C_Km^{-1/2}H_\beta(\alpha)^m,
\end{equation}
for all sufficiently large $m$.  Consequently,
\begin{equation}\label{eq:best-root-limit}
  \lim_{m\to\infty}E_{m,\beta}(\alpha m)^{1/m}
  =H_\beta(\alpha),
  \qquad 0<\alpha<\alpha_c(\gamma).
\end{equation}
\end{theorem}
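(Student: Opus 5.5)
The plan is to assemble the two-sided bound \eqref{eq:best-error-strong} directly from the Faber error sandwich of Proposition~\ref{prop:faber-sandwich}, whose two halves are matched by the two coefficient estimates just proved.

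For the upper bound, combine \eqref{eq:upper-tail} with the uniform tail estimate of Lemma~\ref{lem:tail-bound}. This gives
\[
  E_{m,\beta}(\alpha m)\le 2\sum_{n>m}|a_n(\alpha m)|\le 2C_Km^{-1/2}H_\beta(\alpha)^m
\]
for all sufficiently large $m$, uniformly for $\alpha\in K$. The content of that lemma already does the work. The near tail $m<n\le Tm$ is controlled by Theorem~\ref{thm:two-saddle} at effective ratio $\alpha m/n$, and the action identity \eqref{eq:action-geometric} shows that these terms decay geometrically relative to $m^{-1/2}H_\beta(\alpha)^m$. The remote tail $n>Tm$ is exponentially negligible by Lemma~\ref{lem:remote-bound}. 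One point needs checking: the factor $n^{-1/2}$ with $n\le Tm$ is comparable to $m^{-1/2}$, so the prefactor survives the summation.

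For the lower bound, apply \eqref{eq:lower-coeff} with $n=m+1$ and with $n=m+2$, which gives
\[
  E_{m,\beta}(\alpha m)\ge\max\{|a_{m+1}(\alpha m)|,|a_{m+2}(\alpha m)|\}.
\]
Lemma~\ref{lem:two-coeff} bounds this maximum below by $c_Km^{-1/2}H_\beta(\alpha)^m$. Its key ingredient is that the phase increment $\omega_\beta(\alpha)$ stays uniformly away from $0$ and $-\pi$, so the two conjugate-saddle cosines cannot both be small. The constants in both halves depend only on $K$ and hold for $m\ge m_0(K)$, which yields \eqref{eq:best-error-strong} with $c_K\le C_K$.

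For the root limit \eqref{eq:best-root-limit}, fix $\alpha$ and take $K=\{\alpha\}$. Taking $m$th roots in \eqref{eq:best-error-strong} gives
\[
  (c_K m^{-1/2})^{1/m}H_\beta(\alpha)\le E_{m,\beta}(\alpha m)^{1/m}\le (C_K m^{-1/2})^{1/m}H_\beta(\alpha),
\]
and both outer factors $(c_Km^{-1/2})^{1/m}$ and $(C_Km^{-1/2})^{1/m}$ tend to $1$. Since every point of $(0,\alpha_c(\gamma))$ lies in such a compact set, the limit holds for every subcritical $\alpha$.

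The main obstacle has already been isolated upstream, namely the uniform noncancellation of the lower bound. The only remaining care in the present assembly is bookkeeping. The coefficient $a_n(\alpha m)$ must be read through Theorem~\ref{thm:two-saddle} at ratio $\alpha m/n$ rather than $\alpha$. One must also confirm that the envelope $\exp(\Re S(m+j;\alpha m))$ equals $H_\beta(\alpha)^m$ times a factor bounded above and below. This follows from \eqref{eq:action-derivative}: $\partial_x\Re S$ is bounded on $[m,m+2]$, uniformly for $\alpha\in K$.
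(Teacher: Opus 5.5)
Your proposal is correct and is essentially the paper's proof: the upper bound comes from \eqref{eq:upper-tail} combined with Lemma~\ref{lem:tail-bound}, the lower bound from \eqref{eq:lower-coeff} at $n=m+1,m+2$ together with Lemma~\ref{lem:two-coeff}, and the root limit from taking $m$th roots. Your extra bookkeeping points are sound, and the paper leaves them implicit: the comparability of $n^{-1/2}$ with $m^{-1/2}$ for $n\le Tm$, and applying the two-sided bound with the singleton compact set $K=\{\alpha\}$.
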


\begin{proof}
The upper bound follows from Proposition~\ref{prop:faber-sandwich} and Lemma~\ref{lem:tail-bound}.  For the lower bound, Proposition~\ref{prop:faber-sandwich} gives $E_{m,\beta}(\alpha m)\ge\max\{|a_{m+1}(\alpha m)|,|a_{m+2}(\alpha m)|\}$, and Lemma~\ref{lem:two-coeff} applies.  Taking $m$th roots proves \eqref{eq:best-root-limit}.
\end{proof}

By Proposition~\ref{prop:isometry}, Theorem~\ref{thm:best-error} is simultaneously a prefactor-resolved repeated-pole rational approximation result on the original infinite sector. At $\beta=0$, its two-sided order coincides with the exact-linear specialization proved in the companion half-line paper~\citep{XueZhang2026}; the genuinely sectorial content here is the extension to $\beta>0$.

\subsection{Minimization of the subcritical rate function}\label{subsec:optimal-pole}
With the best-error law established, pole selection becomes a one-variable calculus problem.  The saddle parametrization makes the minimizer and the minimum value explicit.

Set $h_\beta(\alpha):=\log H_\beta(\alpha)$.  The envelope identity already used in Lemma~\ref{lem:H-less-one} gives $h_\beta'(\alpha)=\Re(\zeta_{\alpha,\gamma}^{\gamma})$.

\begin{proposition}[Unique minimizer of the subcritical rate]\label{prop:optimal-rate}
On $0<\alpha<\alpha_c(\gamma)$, $H_\beta(\alpha)$ has the unique minimizer
\begin{equation}\label{eq:alpha-star}
  \alpha_\beta^*
  =\frac{1}{\gamma\sin(\pi/(2\gamma))}.
\end{equation}
At this point $\zeta_{\alpha_\beta^*,\gamma}=\exp(\ii\pi/(2\gamma))$, and
\begin{equation}\label{eq:rho-star}
  \rho_\beta^*
  :=H_\beta(\alpha_\beta^*)
  =\tan\!\left(\frac{\pi}{4\gamma}\right).
\end{equation}
Moreover,
\begin{equation}\label{eq:astar-range}
  0<\alpha_\beta^*<1<\alpha_c(\gamma).
\end{equation}
\end{proposition}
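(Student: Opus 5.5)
The plan is to reduce everything to the saddle parametrization of Lemma~\ref{lem:alpha-monotone} and the envelope identity $h_\beta'(\alpha)=\Re(\zeta_{\alpha,\gamma}^\gamma)$ from \eqref{eq:hprime-early}.

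\textbf{Locating the critical point.} Write the first-quadrant saddle as $\zeta=r\e^{\ii\theta}$. The sign of $h_\beta'(\alpha)$ is then the sign of $\cos(\gamma\theta)$. By Lemma~\ref{lem:alpha-monotone}, $\theta$ decreases strictly and continuously from $\pi/(\gamma+1)$ to $0$ as $\alpha$ increases from $0$ to $\alpha_c(\gamma)$. Since $\gamma>1$, we have $2\gamma>\gamma+1$, so $\pi/(2\gamma)$ lies in $(0,\pi/(\gamma+1))$. The sign pattern is as follows.
\begin{itemize}
\item For small $\alpha$, $\theta>\pi/(2\gamma)$, so $\cos(\gamma\theta)<0$ and $h_\beta$ decreases.
\item For $\alpha$ near $\alpha_c$, $\theta<\pi/(2\gamma)$, so $h_\beta$ increases.
\end{itemize}
Hence $h_\beta$ has exactly one critical point on $(0,\alpha_c(\gamma))$, namely the one with $\theta=\pi/(2\gamma)$, and it is the unique global minimizer. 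This is the single sign change already noted in the proof of Lemma~\ref{lem:H-less-one}.

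\textbf{Evaluating the minimizer.} Substitute $\theta=\pi/(2\gamma)$ into \eqref{eq:rtheta}. Because $(\gamma\mp1)\theta=\pi/2\mp\pi/(2\gamma)$, both sines equal $\cos(\pi/(2\gamma))$, so $r=1$ and $\zeta_{\alpha_\beta^*,\gamma}=\e^{\ii\pi/(2\gamma)}$. In \eqref{eq:alpha-theta}, the powers of $\cos(\pi/(2\gamma))$ combine to a single factor. With $\sin(2\theta)=\sin(\pi/\gamma)=2\sin(\pi/(2\gamma))\cos(\pi/(2\gamma))$, this gives \eqref{eq:alpha-star}.

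For the value of $H_\beta$ at this point, note that $\zeta^\gamma=\ii$ has zero real part. So $h_\beta(\alpha_\beta^*)=\log|1-\e^{\ii\varphi}|/|1+\e^{\ii\varphi}|$ with $\varphi=\pi/(2\gamma)$. The standard half-angle identity turns this ratio into $\tan(\varphi/2)$, which gives \eqref{eq:rho-star}.

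\textbf{The inequalities \eqref{eq:astar-range}.} Positivity of $\alpha_\beta^*$ is immediate. For $\alpha_\beta^*<1$, put $x=\pi/(2\gamma)\in[\pi/4,\pi/2)$. Then
\[
\gamma\sin(\pi/(2\gamma))=\frac{\pi}{2}\,\frac{\sin x}{x}>1,
\]
because $\sin x/x>2/\pi$ on $(0,\pi/2)$.

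For $\alpha_c(\gamma)>1$, set $y=1/\gamma\in[1/2,1)$ and rewrite the logarithm of \eqref{eq:alpha-c} as
\[
\log\alpha_c(\gamma)=\frac{\operatorname{artanh}y}{y}+\tfrac12\log(1-y^2).
\]
Expanding both terms in power series gives
\[
1+\sum_{k\ge1}y^{2k}\Bigl(\frac1{2k+1}-\frac1{2k}\Bigr)\ \ge\ 1-\sum_{k\ge1}\frac{1}{2k(2k+1)}=\log 2>0.
\]

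\textbf{Main obstacle.} I expect the delicate step to be the last one, finding a clean certificate that $\alpha_c(\gamma)>1$ uniformly on $(1,2]$; the series rewriting above is the route I would try first. The rest is bookkeeping: the orientation of monotonicity, and checking that the branch used in \eqref{eq:phase} gives $\zeta^\gamma=\ii$ at the minimizer.
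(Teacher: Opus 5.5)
Your proposal is correct and follows the paper's proof: the single sign change of $h_\beta'(\alpha)=r^\gamma\cos(\gamma\theta)$ along the monotone saddle parametrization, the value $r=1$ at $\theta=\pi/(2\gamma)$, the half-angle evaluation giving $\tan(\pi/(4\gamma))$, and $\alpha_\beta^*<1$ via $\sin x>2x/\pi$ (the paper's concavity-of-sine step) all match the paper, and reading $\alpha_\beta^*$ off \eqref{eq:alpha-theta} instead of the saddle equation makes no real difference. The only divergence is the bound $\alpha_c(\gamma)>1$: the paper takes $\alpha_c(\gamma)=2/(\gamma M_\gamma)$ with $M_\gamma=\max_{0<t<1}t^{\gamma-1}(1-t^2)<1$ from the computation in Lemma~\ref{lem:negative-boundary-barrier} to get $\alpha_c(\gamma)>2/\gamma\ge1$ in one line, whereas your identity $\log\alpha_c(\gamma)=\operatorname{artanh}(y)/y+\tfrac12\log(1-y^2)$ with its series bound is also correct and even gives the sharper uniform bound $\alpha_c(\gamma)>2$.
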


\begin{proof}
Write $\zeta=re^{\ii\theta}$ using \eqref{eq:rtheta}--\eqref{eq:alpha-theta}.  Then
\[
  h_\beta'(\alpha)=r^\gamma\cos(\gamma\theta).
\]
Since $\theta$ decreases strictly as $\alpha$ increases, this vanishes exactly at
\[
  \theta_*:=\frac{\pi}{2\gamma}.
\]
This lies in $(0,\pi/(\gamma+1))$.  Formula \eqref{eq:rtheta} gives $r_*=1$, and substitution into the saddle equation yields \eqref{eq:alpha-star}.  At the minimizer, $\alpha\zeta^\gamma$ is purely imaginary and $|(1-e^{\ii\theta_*})/(1+e^{\ii\theta_*})|=\tan(\theta_*/2)$, which proves \eqref{eq:rho-star}.  The sign change of $\cos(\gamma\theta)$ gives uniqueness.  Finally, concavity of sine on $(0,\pi/2)$ gives $\sin(\pi/(2\gamma))>1/\gamma$, hence $\alpha_\beta^*<1$. For the other inequality, let $M_\gamma:=\max_{0<t<1}t^{\gamma-1}(1-t^2)<1$. The computation in Lemma~\ref{lem:negative-boundary-barrier} gives $\alpha_c(\gamma)=2/(\gamma M_\gamma)$, so $\alpha_c(\gamma)>2/\gamma\ge1$.
\end{proof}

Thus the rate-minimizing physical pole within the noncoalescing linear regime is $s_{m,\beta}^{\mathrm{rate}}=-\alpha_\beta^*m/h$.  For $\beta=0$, Andersson's half-line theory makes this the global root-asymptotic repeated-pole optimum, with $\alpha_0^*=1/\sqrt2$ and $\rho_0^*=\sqrt2-1$.
For $\beta>0$, Theorem~\ref{thm:best-error} and Proposition~\ref{prop:optimal-rate} concern the noncoalescing branch $0<\alpha<\alpha_c(\gamma)$.  We do not analyze the coalescence transition or the supercritical branches, and hence do not use the stronger global-optimality terminology there.  On the proved branch the gap at coalescence is already strict: $H_\beta(\alpha_c(\gamma)^-)>\rho_\beta^*$, since $H_\beta$ is strictly increasing for $\alpha_\beta^*<\alpha<\alpha_c(\gamma)$.  This distinction does not affect the pole-design result near \eqref{eq:alpha-star}, which lies strictly inside the proved range; Section~\ref{subsec:numerical-rate} gives a finite-degree diagnostic beyond coalescence.

As $\beta\uparrow\pi$, $\gamma\downarrow1$, $\alpha_\beta^*\uparrow1$, and $\rho_\beta^*\uparrow1$.  Thus the classical angle-independent choice $q=m$ approaches the minimizer of the subcritical rate function as the sector approaches the right half-plane.

\begin{proposition}[Quadratic flatness at the minimizer]\label{prop:curvature}
Let $\theta_*=\pi/(2\gamma)$.  Then
\begin{equation}\label{eq:curvature}
  h_\beta''(\alpha_\beta^*)
  =\frac{\gamma^2\sin^2\theta_*\cos\theta_*}
  {\cos^2\theta_*+\gamma^2\sin^2\theta_*}>0.
\end{equation}
Consequently,
\begin{equation}\label{eq:local-rate-expansion}
  \log\frac{H_\beta(\alpha)}{\rho_\beta^*}
  =\frac12h_\beta''(\alpha_\beta^*)
   (\alpha-\alpha_\beta^*)^2
   +O((\alpha-\alpha_\beta^*)^3).
\end{equation}
\end{proposition}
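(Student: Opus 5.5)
The plan is to differentiate the envelope identity $h_\beta'(\alpha)=\Re(\zeta_{\alpha,\gamma}^\gamma)$ once more, so that $h_\beta''(\alpha)=\Re\bigl(\gamma\zeta^{\gamma-1}\,d\zeta/d\alpha\bigr)$ with $\zeta=\zeta_{\alpha,\gamma}$. The saddle is a smooth function of $\alpha$ on $(0,\alpha_c(\gamma))$ by the implicit function theorem, since $\psi''_{\alpha,\gamma}(\zeta_{\alpha,\gamma})\ne0$ by Lemma~\ref{lem:saddle-pair}. Hence $h_\beta$ is real-analytic there, and the local expansion \eqref{eq:local-rate-expansion} will follow from Taylor's theorem once \eqref{eq:curvature} is established, using $h_\beta'(\alpha_\beta^*)=0$ from Proposition~\ref{prop:optimal-rate}.

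To compute $d\zeta/d\alpha$ I will differentiate the stationarity condition $\partial_\zeta\psi_{\alpha,\gamma}(\zeta)=0$ in $\alpha$. Since $\partial_\alpha\partial_\zeta\psi=\gamma\zeta^{\gamma-1}$, this gives
\[
 \frac{d\zeta}{d\alpha}=-\frac{\gamma\zeta^{\gamma-1}}{\psi''_{\alpha,\gamma}(\zeta)},
 \qquad
 h_\beta''(\alpha)=-\Re\frac{\gamma^2\zeta^{2\gamma-2}}{\psi''_{\alpha,\gamma}(\zeta)}.
\]
At $\alpha_\beta^*$, Proposition~\ref{prop:optimal-rate} gives $\zeta_*=e^{\ii\theta_*}$ with $\theta_*=\pi/(2\gamma)$, so $\zeta_*^{2\gamma}=e^{\ii\pi}=-1$ and $\zeta_*^{2\gamma-2}=-e^{-2\ii\theta_*}$. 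For $\psi''$ I will use \eqref{eq:second-derivative}:
\[
\psi''=\frac{2[(\gamma-1)-(\gamma+1)\zeta^2]}{\zeta(1-\zeta^2)^2}.
\]
With $\zeta=e^{\ii\theta}$, one has $1-\zeta^2=-2\ii\sin\theta\,e^{\ii\theta}$, so $(1-\zeta^2)^2=-4\sin^2\theta\,e^{2\ii\theta}$. Substituting gives
\[
 h_\beta''(\alpha_\beta^*)=\Re\frac{-2\gamma^2\sin^2\theta_*\,e^{-\ii\theta_*}}{(\gamma-1)-(\gamma+1)e^{2\ii\theta_*}}.
\]
Writing the denominator as $e^{\ii\theta_*}\bigl[(\gamma-1)e^{-\ii\theta_*}-(\gamma+1)e^{\ii\theta_*}\bigr]$, the bracket equals $-2\cos\theta_*-2\ii\gamma\sin\theta_*$. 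The factor $e^{\ii\theta_*}$ then cancels against the $e^{-\ii\theta_*}$ in the numerator up to a residual phase, which must be tracked carefully.

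The resulting real part should reduce to
\[
\frac{\gamma^2\sin^2\theta_*\cos\theta_*}{\cos^2\theta_*+\gamma^2\sin^2\theta_*},
\]
where the denominator $\cos^2\theta_*+\gamma^2\sin^2\theta_*$ is $|\cos\theta_*+\ii\gamma\sin\theta_*|^2$. The main obstacle is this bookkeeping: tracking the exponential factors so that exactly $\cos\theta_*$ survives in the numerator, and confirming sign consistency with the branch of $\zeta^{\gamma-1}$ used on the first-quadrant saddle. If the direct substitution yields a stray phase, I will cross-check using the polar parametrization instead. There, $d\alpha/d\theta$ comes from \eqref{eq:alpha-theta}, and I will differentiate $h_\beta'=r^\gamma\cos(\gamma\theta)$ along $\theta$ at $\theta_*$. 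Because $\cos(\gamma\theta_*)=0$, only the term $-\gamma r^\gamma\sin(\gamma\theta)\,d\theta/d\alpha$ survives, and $r_*=1$, $\sin(\gamma\theta_*)=1$. This route requires only $d\alpha/d\theta$ at $\theta_*$, which is elementary from \eqref{eq:alpha-theta} and \eqref{eq:rtheta}.

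Positivity is immediate because $0<\theta_*<\pi/2$; equivalently it follows from Lemma~\ref{lem:alpha-monotone}, since $d\theta/d\alpha<0$.
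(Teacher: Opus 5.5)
Your main route is the paper's route. You differentiate $h_\beta'(\alpha)=\Re(\zeta^\gamma)$, implicitly differentiate the saddle condition, and evaluate at $\zeta_*=e^{\ii\theta_*}$ using $(\gamma-1)-(\gamma+1)e^{2\ii\theta_*}=-2e^{\ii\theta_*}(\cos\theta_*+\ii\gamma\sin\theta_*)$. Your formula $d\zeta/d\alpha=-\gamma\zeta^{\gamma-1}/\psi''$ is equivalent to \eqref{eq:zeta-alpha-derivative} via the saddle equation, and $h_\beta''=-\Re(\gamma^2\zeta^{2\gamma-2}/\psi'')$ is correct.

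However, your displayed intermediate formula has a slip. In $1/\psi''=\zeta(1-\zeta^2)^2/\bigl(2[(\gamma-1)-(\gamma+1)\zeta^2]\bigr)$ the factor $\zeta$ sits in the numerator, so
\[
 \zeta_*^{2\gamma-2}\,\zeta_*\,(1-\zeta_*^2)^2
 =(-e^{-2\ii\theta_*})\,e^{\ii\theta_*}\,(-4\sin^2\theta_*\,e^{2\ii\theta_*})
 =4\sin^2\theta_*\,e^{\ii\theta_*},
\]
not $4\sin^2\theta_*\,e^{-\ii\theta_*}$. With $e^{-\ii\theta_*}$, as you wrote it, a stray factor $e^{-2\ii\theta_*}$ survives and the value is wrong: at $\gamma=2$ it gives $-4\sqrt2/5$ instead of $2\sqrt2/5$. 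With the correct factor the cancellation is exact:
\[
 h_\beta''(\alpha_\beta^*)
 =\Re\frac{-2\gamma^2\sin^2\theta_*\,e^{\ii\theta_*}}{-2e^{\ii\theta_*}(\cos\theta_*+\ii\gamma\sin\theta_*)}
 =\Re\frac{\gamma^2\sin^2\theta_*}{\cos\theta_*+\ii\gamma\sin\theta_*},
\]
which is \eqref{eq:curvature}. So the ``residual phase'' and the branch worry both come from the slip. The only branch input is $\zeta_*^\gamma=\ii$, hence $\zeta_*^{2\gamma-2}=-e^{-2\ii\theta_*}$, and this is unambiguous because $0<\theta_*<\pi/2$.

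Your fallback is correct and is a genuinely different route. At $\theta_*$ only the term $h_\beta''=-\gamma\,d\theta/d\alpha$ survives. Logarithmic differentiation of \eqref{eq:alpha-theta} then gives
\[
 \frac{d\log\alpha_\gamma}{d\theta}(\theta_*)
 =-\frac{(\gamma+1)^2+(\gamma-1)^2}{2}\tan\theta_*-2\cot2\theta_*
 =-\gamma^2\tan\theta_*-\cot\theta_*.
\]
This uses $(\gamma\pm1)\theta_*=\pi/2\pm\theta_*$ and $2\cot2\theta=\cot\theta-\tan\theta$. With $\alpha_\beta^*=1/(\gamma\sin\theta_*)$ this yields $d\alpha_\gamma/d\theta=-(\cos^2\theta_*+\gamma^2\sin^2\theta_*)/(\gamma\sin^2\theta_*\cos\theta_*)$. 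Then $-\gamma(d\alpha_\gamma/d\theta)^{-1}$ is exactly \eqref{eq:curvature}.

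This route stays real, needs no branch bookkeeping, and shows positivity as the sign of $d\alpha_\gamma/d\theta$. That sign is strictly negative by the explicit value; Lemma~\ref{lem:alpha-monotone} alone gives only $\le0$. The paper's complex substitution is shorter once the exponential factor is tracked correctly. Your Taylor step, from analyticity of $\zeta_{\alpha,\gamma}$ and $h_\beta'(\alpha_\beta^*)=0$, is fine.
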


\begin{proof}
Let $\zeta=\zeta_{\alpha,\gamma}$.  Implicit differentiation of \eqref{eq:saddle-eq} gives
\begin{equation}\label{eq:zeta-alpha-derivative}
  \zeta'(\alpha)
  =-\frac{\zeta(1-\zeta^2)}
  {\alpha[(\gamma-1)-(\gamma+1)\zeta^2]}.
\end{equation}
Since $h_\beta'(\alpha)=\Re(\zeta^\gamma)$, $h_\beta''(\alpha)=\Re(\gamma\zeta^{\gamma-1}\zeta'(\alpha))$. At the minimizer, $\zeta^\gamma=\ii$, $\alpha_\beta^*=1/(\gamma\sin\theta_*)$, and
\[
  (\gamma-1)-(\gamma+1)\e^{2\ii\theta_*}
  =-2\e^{\ii\theta_*}(\cos\theta_*+\ii\gamma\sin\theta_*).
\]
Substitution into \eqref{eq:zeta-alpha-derivative} gives
\[
  h_\beta''(\alpha_\beta^*)
  =\Re\!\left(
     \frac{\gamma\sin\theta_*}
     {\alpha_\beta^*(\cos\theta_*+\ii\gamma\sin\theta_*)}
   \right)
  =\frac{\gamma^2\sin^2\theta_*\cos\theta_*}
  {\cos^2\theta_*+\gamma^2\sin^2\theta_*},
\]
which is \eqref{eq:curvature}.  Taylor expansion at the strict minimizer then gives \eqref{eq:local-rate-expansion}.
\end{proof}

\begin{corollary}[Finite-degree minimizers on compact subcritical windows]\label{cor:finite-degree-minimizers}
Let $K=[a,b]\Subset(0,\alpha_c(\gamma))$ with $\alpha_\beta^*\in(a,b)$.  For each integer $m\ge1$, choose any
\[
  \widehat\alpha_m\in
  \operatorname*{argmin}_{\alpha\in K}E_{m,\beta}(\alpha m).
\]
Then such a minimizer exists, and, as $m\to\infty$,
\begin{equation}\label{eq:finite-minimizer-rate}
  \widehat\alpha_m\longrightarrow\alpha_\beta^*,
  \qquad
  |\widehat\alpha_m-\alpha_\beta^*|=O_K(m^{-1/2}).
\end{equation}
No uniqueness of the finite-$m$ minimizer, and no assertion about minimization outside $K$, is required.
\end{corollary}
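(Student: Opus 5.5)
Existence first. For fixed $m$, I would show that $\alpha\mapsto E_{m,\beta}(\alpha m)$ is continuous on $K$; a minimizer then exists by compactness. By \eqref{eq:best-error}, $E_{m,\beta}(q)=\inf_{p\in\Pm_m}\|g_q-p\|_{K_\beta}$, which is $1$-Lipschitz in $g_q$ with respect to the sup norm on $K_\beta$. It therefore suffices to check that $q\mapsto g_q$ is continuous into $C(K_\beta)$. On $K_\beta\setminus\{-1\}$ one has $\Re\,((1-w)/(1+w))\ge0$, because this is the image of the sector. Hence $|\partial_q g_q(w)|=|Y(w)|\,|e^{-qY(w)}|$ with $Y=(1-w)/(1+w)$. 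For $q$ in a compact subset of $(0,\infty)$, this is uniformly bounded: $|Y|e^{-q\Re Y}$ is bounded on the sector, since $\Re Y\ge|Y|\cos(\beta/2)$. Thus $g_q$ is Lipschitz in $q$ uniformly on $K_\beta$, and continuity follows.

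Localization. Set $\delta_m:=|\widehat\alpha_m-\alpha_\beta^*|$. By minimality and Theorem~\ref{thm:best-error}, applied uniformly on $K$,
\[
 c_Km^{-1/2}H_\beta(\widehat\alpha_m)^m\le E_{m,\beta}(\widehat\alpha_m m)\le E_{m,\beta}(\alpha_\beta^* m)\le C_Km^{-1/2}(\rho_\beta^*)^m .
\]
The prefactors $m^{-1/2}$ cancel, and taking logarithms gives
\[
 m\bigl[h_\beta(\widehat\alpha_m)-h_\beta(\alpha_\beta^*)\bigr]\le\log(C_K/c_K).
\]
The key point is a quadratic growth bound: there is $\kappa_K>0$ with $h_\beta(\alpha)-h_\beta(\alpha_\beta^*)\ge\kappa_K(\alpha-\alpha_\beta^*)^2$ for all $\alpha\in K$. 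Combining this with the display gives $\delta_m^2\le\log(C_K/c_K)/(\kappa_Km)$, which is \eqref{eq:finite-minimizer-rate}. Convergence $\widehat\alpha_m\to\alpha_\beta^*$ follows as well.

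Quadratic growth on all of $K$. This is the only step needing care, since Proposition~\ref{prop:curvature} gives curvature only locally. Near $\alpha_\beta^*$, \eqref{eq:curvature} and \eqref{eq:local-rate-expansion} give the bound with constant $\tfrac14h_\beta''(\alpha_\beta^*)$ on some interval $|\alpha-\alpha_\beta^*|\le\eta$. Away from it, Proposition~\ref{prop:optimal-rate} makes $\alpha_\beta^*$ the unique minimizer on $(0,\alpha_c(\gamma))$. Indeed, $h_\beta'=r^\gamma\cos(\gamma\theta)$ is negative before $\alpha_\beta^*$ and positive after, so $h_\beta$ is strictly monotone on each side. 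The continuous function $h_\beta-h_\beta(\alpha_\beta^*)$ therefore has a positive minimum $\mu$ on the compact set $K\cap\{|\alpha-\alpha_\beta^*|\ge\eta\}$. Since $(\alpha-\alpha_\beta^*)^2\le(b-a)^2$ there, we may take $\kappa_K=\min\{\tfrac14h_\beta''(\alpha_\beta^*),\mu/(b-a)^2\}$. Smoothness of $h_\beta$, needed for the Taylor step, comes from the analytic dependence of the nondegenerate saddle on $\alpha$ (Lemma~\ref{lem:saddle-pair}).
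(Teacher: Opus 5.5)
Your proposal is correct and follows the paper's proof essentially step for step: continuity of $\alpha\mapsto E_{m,\beta}(\alpha m)$ gives existence, the two-sided bounds of Theorem~\ref{thm:best-error} give $m[h_\beta(\widehat\alpha_m)-h_\beta(\alpha_\beta^*)]\le\log(C_K/c_K)$, and quadratic growth of $h_\beta$ at $\alpha_\beta^*$ finishes the argument. The only differences are cosmetic. You supply an explicit Lipschitz bound for $q\mapsto g_q$, which the paper merely asserts. You also fold the paper's two steps (first $\widehat\alpha_m\to\alpha_\beta^*$ from uniqueness, then local curvature) into a single global quadratic-growth constant on $K$, which is equivalent.
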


\begin{proof}
For fixed $m$, the map $\alpha\mapsto E_{m,\beta}(\alpha m)$ is continuous on $K$: the transformed targets depend continuously on $\alpha$ in the uniform norm, and distance to the fixed finite-dimensional space $\Pm_m$ is Lipschitz with respect to that norm.  Hence the minimum on $K$ is attained.  Let $c_K,C_K$ be the constants in Theorem~\ref{thm:best-error}.  For all sufficiently large $m$, minimality of $\widehat\alpha_m$ gives
\[
  c_Km^{-1/2}H_\beta(\widehat\alpha_m)^m
  \le E_{m,\beta}(\widehat\alpha_m m)
  \le E_{m,\beta}(\alpha_\beta^*m)
  \le C_Km^{-1/2}H_\beta(\alpha_\beta^*)^m.
\]
Therefore, with $h_\beta=\log H_\beta$,
\begin{equation}\label{eq:finite-minimizer-action-gap}
  0\le h_\beta(\widehat\alpha_m)-h_\beta(\alpha_\beta^*)
  \le \frac{1}{m}\log\frac{C_K}{c_K}.
\end{equation}
The unique minimum in Proposition~\ref{prop:optimal-rate} first implies $\widehat\alpha_m\to\alpha_\beta^*$.  Proposition~\ref{prop:curvature} then gives a neighborhood of $\alpha_\beta^*$ and a constant $c>0$ for which
$h_\beta(\alpha)-h_\beta(\alpha_\beta^*)\ge c|\alpha-\alpha_\beta^*|^2$.  Combining this with \eqref{eq:finite-minimizer-action-gap} proves \eqref{eq:finite-minimizer-rate}.
\end{proof}

\subsection{The simple choice \texorpdfstring{\(q=m\)}{q=m} and stable interpolation}\label{subsec:q-equals-m}
The subcritical rate-minimizing ratio depends on the sector angle, but in applications one may prefer the angle-independent rule $q=m$.  We quantify its local overhead and then record a simple transfer principle for stable interpolation.

By \eqref{eq:astar-range}, $q=m$ lies between the subcritical minimizer and coalescence for every $0\le\beta<\pi$, and Theorem~\ref{thm:best-error} gives $E_{m,\beta}(m)\asymp m^{-1/2}H_\beta(1)^m$.  Thus the theory provides a sharp best-approximation benchmark for the classical angle-independent choice $q=m$, while \eqref{eq:alpha-star} quantifies the additional rate improvement available from sector-dependent pole selection.  The root factors also give a more directly computational comparison.  For $0<\varepsilon<1$, define the first attainment degree $m_\varepsilon(\alpha):=\min\{m\in\mathbb N:m\ge1,\ E_{m,\beta}(\alpha m)\le\varepsilon\}$. Theorem~\ref{thm:best-error} makes this set nonempty.  For every fixed integer $m\ge1$, the finite-dimensional space $\Pm_m$ is closed and $g_{\alpha m}\notin\Pm_m$, so $E_{m,\beta}(\alpha m)>0$; hence $m_\varepsilon(\alpha)\to\infty$ as $\varepsilon\downarrow0$.  With $L=\log(1/\varepsilon)$ and $a_\alpha=-\log H_\beta(\alpha)>0$, the same two-sided bounds give, without any monotonicity assumption on the error sequence,
\[
  m_\varepsilon(\alpha)
  =\frac{L-\tfrac12\log L}{a_\alpha}+O(1).
\]
Indeed, the two-sided estimate is equivalent at threshold to $a_\alpha m+\tfrac12\log m=L+O(1)$, and one substitution gives the displayed formula.  Consequently,
\begin{equation}\label{eq:degree-ratio}
  \frac{m_\varepsilon(1)}{m_\varepsilon(\alpha_\beta^*)}
  \longrightarrow
  \frac{-\log\rho_\beta^*}{-\log H_\beta(1)},
  \qquad \varepsilon\downarrow0.
\end{equation}
Thus the lower-order attainment-degree correction is $-(2a_\alpha)^{-1}\log\log(1/\varepsilon)$.

\begin{corollary}[Subexponentially stable interpolation]\label{cor:stable-interpolation}
Let $I_m:C(K_\beta)\to\Pm_m$ be interpolation projectors, so that $I_mp=p$ for every $p\in\Pm_m$, with Lebesgue constants $\Lambda_m$ satisfying \(\lim_{m\to\infty}\Lambda_m^{1/m}=1\). Then, for every fixed $0<\alpha<\alpha_c(\gamma)$,
\begin{equation}\label{eq:interp-root-rate}
  \lim_{m\to\infty}
  \|g_{\alpha m}-I_mg_{\alpha m}\|_{K_\beta}^{1/m}
  =H_\beta(\alpha).
\end{equation}
\end{corollary}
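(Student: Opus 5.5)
The plan is to sandwich the interpolation error between the best error and a Lebesgue-constant multiple of it, and then take $m$th roots using Theorem~\ref{thm:best-error}, or more precisely its root form \eqref{eq:best-root-limit}. Fix $0<\alpha<\alpha_c(\gamma)$ and write $g=g_{\alpha m}$. For the upper bound I will use the classical Lebesgue inequality. Let $p_m^*\in\Pm_m$ be a best approximation; it exists because $\Pm_m$ is finite-dimensional. Since $I_m$ reproduces $\Pm_m$,
\[
  g-I_mg=(g-p_m^*)-I_m(g-p_m^*),
\]
which gives
\[
  \|g-I_mg\|_{K_\beta}\le(1+\Lambda_m)\,E_{m,\beta}(\alpha m),
\]
where $\Lambda_m$ denotes the operator norm of $I_m$ on $C(K_\beta)$.

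For the lower bound, $I_mg\in\Pm_m$, so by definition \eqref{eq:best-error}
\[
  \|g-I_mg\|_{K_\beta}\ge E_{m,\beta}(\alpha m).
\]

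Taking $m$th roots of the two bounds gives
\[
  E_{m,\beta}(\alpha m)^{1/m}\le\|g-I_mg\|_{K_\beta}^{1/m}\le(1+\Lambda_m)^{1/m}E_{m,\beta}(\alpha m)^{1/m}.
\]
Since $\Lambda_m\ge1$ for any projector onto a nonzero space, we have $1\le(1+\Lambda_m)^{1/m}\le2^{1/m}\Lambda_m^{1/m}$, and the hypothesis $\Lambda_m^{1/m}\to1$ then gives $(1+\Lambda_m)^{1/m}\to1$. Applying Theorem~\ref{thm:best-error} with the singleton compact set $K=\{\alpha\}$ yields $E_{m,\beta}(\alpha m)^{1/m}\to H_\beta(\alpha)$ by \eqref{eq:best-root-limit}. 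The squeeze theorem then proves \eqref{eq:interp-root-rate}.

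There is no real obstacle here. The only points that need care are that $I_mg$ is well defined, which holds because $g_{\alpha m}$ extends continuously to $K_\beta$ with $g(-1)=0$ and so lies in $C(K_\beta)$, and that the uniformity in Theorem~\ref{thm:best-error} is needed only at the single fixed ratio $\alpha$.
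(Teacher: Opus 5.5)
Your proposal is correct and is essentially the paper's proof: the Lebesgue sandwich $E_{m,\beta}(\alpha m)\le\|g_{\alpha m}-I_mg_{\alpha m}\|_{K_\beta}\le(1+\Lambda_m)E_{m,\beta}(\alpha m)$, followed by $m$th roots and the root limit \eqref{eq:best-root-limit}. Your check that $(1+\Lambda_m)^{1/m}\to1$, using $\Lambda_m\ge1$, spells out a step the paper leaves implicit.
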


\begin{proof}
The Lebesgue inequality gives $E_{m,\beta}(\alpha m)\le\|g_{\alpha m}-I_mg_{\alpha m}\|_{K_\beta}\le(1+\Lambda_m)E_{m,\beta}(\alpha m)$. Apply \eqref{eq:best-root-limit} and \(\lim_{m\to\infty}\Lambda_m^{1/m}=1\) to reach the conclusion.
\end{proof}

Thus subexponential stability preserves the sharp root factor, while a polynomially growing Lebesgue constant need not preserve the $m^{-1/2}$ prefactor.

\section{Numerical diagnostics and finite-degree behavior}\label{sec:numerics}

We first check the coefficient and error scales, then approximate the finite-degree pole selection problem. All computed errors and optimization gaps are finite-precision diagnostics, not certified enclosures. Section~\ref{subsec:matrix-experiment} tests the actual projected Arnoldi approximation.

\subsection{Coefficients, error scales, and numerical precision}\label{subsec:numerical-coefficients}

Writing $\xi=\e^{\ii\vartheta}$ in \eqref{eq:faber-coeff-xi} gives
\begin{equation}\label{eq:numerical-fourier-coeff}
 a_n(q)=\frac1{2\pi}\int_{-\pi}^{\pi}
 \exp\!\left(q\left(\ii\tan\frac{\vartheta}{2}\right)^\gamma\right)
 \e^{-\ii n\vartheta}\,d\vartheta.
\end{equation}
For noninteger $\gamma$ the integrand has a fractional-power cusp at zero, so the trapezoidal rule need not converge spectrally. The coefficient data reported here use adaptive split quadrature (SciPy/QUADPACK), with relative tolerance $2\times10^{-12}$ and absolute tolerance $10^{-14}$; the integration interval is split at the fractional-power cusp and at fixed nearby points. Independent $70$-digit tanh--sinh checks agree within $1.2\times10^{-12}$ relatively at the four $n=20$ cases below. At $\beta=2\pi/3$, $\alpha=1$, $m=42$, the two-coefficient quantity $\max\{|a_{43}(42)|,|a_{44}(42)|\}$ is $4.037395762576\times10^{-9}$ in that reference calculation: adaptive quadrature differs by $4.03\times10^{-10}$ relatively, whereas $2^{17}$- and $2^{18}$-point FFTs differ by $2.20\times10^{-3}$ and $4.36\times10^{-4}$. Thus an absolute quadrature tolerance alone is not an accuracy guarantee for a small oscillatory coefficient.

For this revision, all $84$ plotted coefficients were also checked on the shifted vertical contour $\Re\zeta=\Re\zeta_{\alpha,\gamma}$, factoring out $H_\beta(\alpha)^n$ before integration. The largest relative discrepancy was $2.89\times10^{-8}$; its effect on the normalized remainder below was less than $5.78\times10^{-7}$. This check includes the largest indices and the smallest coefficients, and uses a different contour from the boundary-quadrature calculation.

Let $a_n^{(0)}(\alpha n)$ be the leading term in \eqref{eq:two-saddle-complex}. Figure~\ref{fig:coefficient-remainder}(a) plots the remainder diagnostic
\begin{equation}\label{eq:remainder-diagnostic}
 R_n:=n^{3/2}H_\beta(\alpha)^{-n}|a_n(\alpha n)-a_n^{(0)}(\alpha n)|,
\end{equation}
which avoids division by a possibly vanishing oscillatory coefficient. We take $\beta=2\pi/3$ and $\alpha=0.3,\alpha_\beta^*,1,0.9\alpha_c(\gamma)$. The last ratio is fixed below coalescence. Its rise near $n=50$ is a finite-index effect: the shifted-contour values of $R_n$ at $n=100,200,400$ are $1.763,1.734,1.796$, respectively. These observations support, but do not prove, the $O(1)$ assertion of Theorem~\ref{thm:two-saddle}.

For the error comparison, define $L_m(\alpha):=\max\{|a_{m+1}(\alpha m)|,|a_{m+2}(\alpha m)|\}$. If $I_m^{\rm L}$ denotes the discrete-Leja interpolant, then
\[
 L_m(\alpha)\le E_{m,\beta}(\alpha m)
 \le\|g_{\alpha m}-I_m^{\rm L}g_{\alpha m}\|_{K_\beta}
\]
is rigorous. We plot the quadrature approximation $\widehat L_m$ and the independent-grid sampled interpolation error $\widehat U_m^{\rm L}$. The Leja sequence starts at $1$ and greedily maximizes the logarithmic distance product over $30001$ candidates, with $\pm1$ appended for the lens. An independent $50001$-point grid measures the error; the Leja construction uses a fixed deterministic ordering for ties. Figure~\ref{fig:strong-sandwich}(b) is consistent with the best-error scale. For Leja interpolation itself, Corollary~\ref{cor:stable-interpolation} preserves only the root factor under its stability hypothesis; the plot establishes neither that hypothesis nor the sharp algebraic prefactor.

\begin{figure}[tbp]
\centering
\includegraphics[width=.98\textwidth]{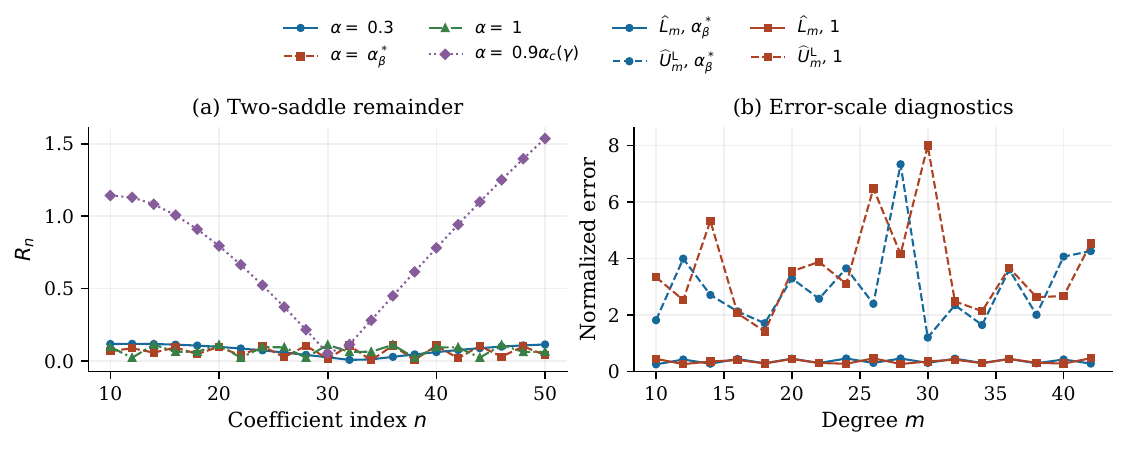}
\caption{Diagnostics at $\beta=2\pi/3$. (a) The normalized coefficient remainder $R_n$ for four ratios. (b) The computed two-coefficient quantity $\widehat L_m$ (solid) and sampled Leja error $\widehat U_m^{\rm L}$ (dashed), divided by $m^{-1/2}H_\beta(\alpha)^m$, at $\alpha=\alpha_\beta^*$ and $1$. The plotted quantities are not certified bounds.}
\label{fig:coefficient-remainder}\label{fig:strong-sandwich}
\end{figure}

\subsection{Asymptotic and finite-degree pole selection}\label{subsec:numerical-rate}

Figure~\ref{fig:rate-functions} compares the explicit rate functions and the sampled Leja root errors for $q=m$. Table~\ref{tab:rate-values} converts the rate difference into the leading small-tolerance degree overhead using \eqref{eq:degree-ratio}. The overhead is at most $2.30\%$ for the four listed angles; no all-angle monotonicity claim is intended.

\begin{figure}[tbp]
\centering
\includegraphics[width=.98\textwidth]{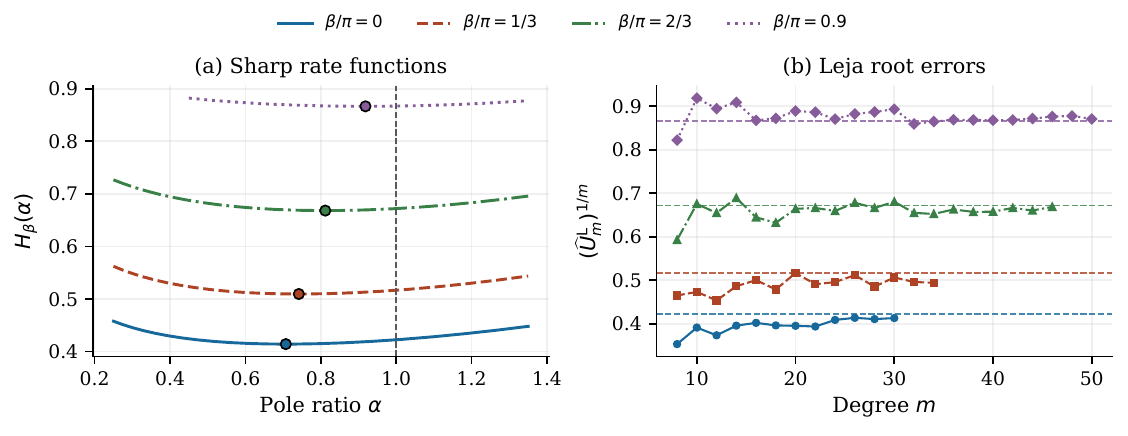}
\caption{(a) The rate functions $H_\beta(\alpha)$, with their exact minima marked by black-edged circles and the rule $q=m$ marked vertically. (b) Sampled Leja root errors $(\widehat U_m^{\rm L})^{1/m}$ for $q=m$; dashed levels are $H_\beta(1)$. The interval curve stops at $m=30$ to avoid the roundoff floor.}
\label{fig:rate-functions}\label{fig:leja-root}
\end{figure}

\begin{table}[tbp]
\centering
\caption{Geometry, sharp factors, and leading small-tolerance degree overhead of $q=m$. The overhead is $100[(-\log\rho_\beta^*)/(-\log H_\beta(1))-1]\%$, as given by~\eqref{eq:degree-ratio}.}
\label{tab:rate-values}
\begin{tabular}{c c c c c c}
\hline
$\beta/\pi$ & $\capc(K_\beta)$ & $\alpha_\beta^*$ & $\rho_\beta^*$ & $H_\beta(1)$ & overhead \\
\hline
$0$ & $0.500000$ & $0.707107$ & $0.414214$ & $0.422470$ & $2.29\%$ \\
$1/3$ & $0.600000$ & $0.741641$ & $0.509525$ & $0.516702$ & $2.12\%$ \\
$2/3$ & $0.750000$ & $0.811794$ & $0.668179$ & $0.671972$ & $1.42\%$ \\
$0.9$ & $0.909091$ & $0.918439$ & $0.866505$ & $0.866902$ & $0.32\%$ \\
\hline
\end{tabular}
\end{table}

To distinguish this rate minimization from finite-degree best approximation, we also solve a discrete complex minimax problem on the lens. Conjugation symmetry permits real polynomial coefficients: averaging a polynomial with $\overline{p(\overline w)}$ cannot increase its error. The maximum modulus principle reduces the uniform error to the lens boundary, since $g_{\alpha m}-p$ is analytic inside and continuous on the closure. On boundary points $w_j$, we minimize $t$ subject to $|g_{\alpha m}(w_j)-p(w_j)|\le t$, using a discrete orthogonal polynomial basis. Supporting half-planes of the residual disks give linear-programming relaxations; adding the active residual directions closes the discrete lower--upper gap. An independent constrained optimizer checks the selected solutions.

For $\beta=\pi/3,2\pi/3$ and $m=8,12,16,20$, we scan $33$ ratios in $[0.35,1.35]$, refine each detected local minimum, and repeat with $513$ and $1025$ upper-boundary nodes. A $2049$-node solve refines the selected minimum. Let $\widetilde\alpha_m$ denote this computed selection, and let $\widetilde E_m(\alpha)$ be the resulting polynomial's largest residual on a separate $16385$-point grid with local maximization around each detected peak. This is a diagnostic for continuous minimax error, not $E_{m,\beta}(\alpha m)$ itself or a certified global search in $\alpha$. The selected ratios change by less than $6\times10^{-6}$ under grid refinement; the largest relative validation-to-LP gap is $4.4\times10^{-5}$. The independent optimizer agrees within $4\times10^{-9}$ relatively on its training grid.

\input{pole_selection_table.tex}

Table~\ref{tab:finite-poles} shows that the finite-degree selection can differ appreciably from $\alpha_\beta^*$. The local minima of the oscillatory objective compete, so the selected ratios need not approach $\alpha_\beta^*$ monotonically. The displayed $\sqrt m\,|\widetilde\alpha_m-\alpha_\beta^*|$ values are consistent with the $O(m^{-1/2})$ localization bound in Corollary~\ref{cor:finite-degree-minimizers}, but these modest degrees do not determine a sharp asymptotic rate. These finite-degree effects also explain why the asymptotic overhead in Table~\ref{tab:rate-values} need not predict a small-degree comparison.

Because the theory deliberately stops at saddle coalescence, we also made a simple degree-$20$ diagnostic beyond it. Proposition~\ref{prop:faber-sandwich} gives the exact lower bound
\[
  L_{20}(\alpha):=\max\{|a_{21}(20\alpha)|,|a_{22}(20\alpha)|\}
  \le E_{20,\beta}(20\alpha).
\]
Sampling $21$ equally spaced ratios on $[\alpha_c(\gamma),3]$, we found that, among these sampled ratios, the computed $L_{20}$ was smallest at the coalescence endpoint for both tested angles. At $\beta=\pi/3$, $L_{20}(\alpha_c)=1.7596\times10^{-4}$, more than $3.1\times10^3$ times the selected subcritical diagnostic in Table~\ref{tab:finite-poles}; at $\beta=2\pi/3$, $L_{20}(\alpha_c)=6.4646\times10^{-3}$, about $2.47\times10^2$ times larger. The coalescence-point coefficients were independently recomputed on a shifted vertical contour, agreeing with the boundary quadrature to better than $6\times10^{-13}$ relatively. No monotonicity between the sampled ratios, or beyond $\alpha=3$, is inferred. These are finite-degree numerical diagnostics, not a theorem about the supercritical asymptotics, and they do not replace the restriction $0<\alpha<\alpha_c(\gamma)$ in the results above.

\section{Matrix and shift-and-invert Krylov consequences}\label{sec:matrix}

No new saddle asymptotics are needed in this section.  The scalar sector theorem transfers through the numerical range to dimension-independent bounds for nonnormal matrices, the worst case over normal matrices recovers the scalar error exactly, and the same M\"obius algebra identifies the repeated-pole class with one shift-and-invert Krylov space.

Fix $0\le\beta<\pi$, $h>0$, and $0<\alpha<\alpha_c(\gamma)$.  For an integer degree $m\ge1$, let $q_m=\alpha m$, choose a best $p_m^*\in\Pm_m$ in \eqref{eq:best-error}, and set $r_m^*(z):=p_m^*((q_m-hz)/(q_m+hz))$.  Then $\|\e^{-hz}-r_m^*\|_{\Sigma_\beta}=E_{m,\beta}(\alpha m)$.

\subsection{Matrix-function transfer for matrices with sectorial numerical range}\label{subsec:matrix-transfer}

For $A\in\C^{N\times N}$, write $W(A):=\{v^*Av:v\in\C^N,\ \|v\|_2=1\}$ for the numerical range.  Numerical-range containment, rather than spectral containment alone, is the natural hypothesis for the following transfer to nonnormal matrices.

\begin{proposition}[Numerical-range transfer and worst-case bound]\label{prop:numerical-range-transfer}
Let $A\in\C^{N\times N}$ satisfy \(W(A)\subset\Sigma_\beta\). Then
\begin{equation}\label{eq:crouzeix-transfer}
  \|\e^{-hA}-r_m^*(A)\|_2
  \le (1+\sqrt2)E_{m,\beta}(\alpha m).
\end{equation}
Moreover, with
\[
  \mathcal E_{m,\beta}^{W}(\alpha)
  :=\sup_{N\ge1}\
  \sup_{\substack{A\in\C^{N\times N}\\ W(A)\subset\Sigma_\beta}}
  \|\e^{-hA}-r_m^*(A)\|_2,
\]
one has
\begin{equation}\label{eq:numerical-range-worstcase}
  E_{m,\beta}(\alpha m)
  \le \mathcal E_{m,\beta}^{W}(\alpha)
  \le (1+\sqrt2)E_{m,\beta}(\alpha m).
\end{equation}
Hence, uniformly for $\alpha$ in every compact $K\Subset(0,\alpha_c(\gamma))$,
\begin{equation}\label{eq:nonnormal-strong-upper}
  \mathcal E_{m,\beta}^{W}(\alpha)
  \asymp m^{-1/2}H_\beta(\alpha)^m,
  \qquad
  \lim_{m\to\infty}
  \bigl(\mathcal E_{m,\beta}^{W}(\alpha)\bigr)^{1/m}
  =H_\beta(\alpha).
\end{equation}
For each fixed $A$ satisfying \(W(A)\subset\Sigma_\beta\), the corresponding individual error has $\limsup$ root at most $H_\beta(\alpha)$.
\end{proposition}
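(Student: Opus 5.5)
The plan is to apply the Crouzeix--Palencia theorem to the error function and then show the bound is sharp by testing diagonal matrices.

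\textbf{Step 1: the upper bound \eqref{eq:crouzeix-transfer}.} Set $f(z):=\e^{-hz}-r_m^*(z)$. Its only finite pole is $z=-q_m/h$, which lies outside $\Sigma_\beta$. Hence $f$ is analytic in a neighbourhood of $W(A)$, and bounded and continuous on the closed convex set $\Sigma_\beta$, with $\sup_{\Sigma_\beta}|f|=E_{m,\beta}(\alpha m)$ by Proposition~\ref{prop:isometry}.

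Crouzeix--Palencia, $\|f(A)\|_2\le(1+\sqrt2)\sup_{W(A)}|f|$, is stated for functions analytic on a neighbourhood of the compact convex set $W(A)$. Since $W(A)$ is compact and contained in $\Sigma_\beta$, I will apply it directly on $W(A)$ itself. The pole $-q_m/h$ has negative real part, so it is not in $W(A)$, and $f$ is entire apart from that pole. This gives
\[
\|f(A)\|_2\le(1+\sqrt2)\sup_{W(A)}|f|\le(1+\sqrt2)E_{m,\beta}(\alpha m).
\]
One check is needed: $r_m^*(A)$ must be well defined. This holds because $-q_m/h\notin W(A)\supset\sigma(A)$, so $q_mI+hA$ is invertible. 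The upper half of \eqref{eq:numerical-range-worstcase} follows by taking suprema over $A$ and $N$.

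\textbf{Step 2: the lower bound in \eqref{eq:numerical-range-worstcase}.} Take $N=1$ and $A=z\in\Sigma_\beta$, so that $W(A)=\{z\}$. Then $\|f(A)\|_2=|f(z)|$, and taking the supremum over $z\in\Sigma_\beta$ recovers $E_{m,\beta}(\alpha m)$.

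A subtlety is that the supremum on the unbounded set $\Sigma_\beta$ may be approached only as $z\to\infty$. This is harmless, because the definition of $\mathcal E^{W}_{m,\beta}$ is itself a supremum, so no attainment is needed.

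\textbf{Step 3: the asymptotic statements.} Relation \eqref{eq:nonnormal-strong-upper} follows by sandwiching with Theorem~\ref{thm:best-error}. The factor $(1+\sqrt2)$ is absorbed into the uniform constants $c_K,C_K$. Taking $m$th roots, the constant factor tends to $1$, which gives the root limit $H_\beta(\alpha)$. For a fixed $A$, the individual error is at most $\mathcal E^{W}_{m,\beta}(\alpha)$, so the $\limsup$ statement follows.

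\textbf{Main obstacle.} The only delicate point is the hypothesis check for Crouzeix--Palencia. It is resolved by working on the compact set $W(A)$ rather than on the unbounded sector, as in Step 1. Everything else is routine.
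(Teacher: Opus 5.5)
Your proposal is correct and follows the paper's proof essentially step for step. You apply Crouzeix--Palencia on the compact set $W(A)$, which avoids the pole $-q_m/h$, for the upper bound; you use $1\times1$ matrices $A=[z]$ with finite $z\in\Sigma_\beta$ and a supremum rather than attainment argument for the lower bound; and you sandwich with Theorem~\ref{thm:best-error} for the two-sided order and root limits.
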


\begin{proof}
The pole of $r_m^*$ is $-q_m/h<0$, which lies outside $\Sigma_\beta$ because $\beta<\pi$.  Thus $f_m(z):=\e^{-hz}-r_m^*(z)$ is analytic on a neighborhood of the compact numerical range $W(A)$. The Crouzeix--Palencia inequality \citep{CrouzeixPalencia2017} gives $\|f_m(A)\|_2\le(1+\sqrt2)\|f_m\|_{W(A)}\le(1+\sqrt2)E_{m,\beta}(\alpha m)$, proving the upper bound in \eqref{eq:numerical-range-worstcase}.

For the lower bound, the class in \eqref{eq:numerical-range-worstcase} contains all scalar matrices $A=[z]$ with finite $z\in\Sigma_\beta$.  For every $\varepsilon>0$ there is such a finite $z_\varepsilon$ with $|\e^{-hz_\varepsilon}-r_m^*(z_\varepsilon)|\ge E_{m,\beta}(\alpha m)-\varepsilon$, even if the scalar supremum is attained only at the compactification point $\infty$.  Letting $\varepsilon\downarrow0$ proves the lower bound.  The two-sided order and root limit then follow from Theorem~\ref{thm:best-error}.
\end{proof}

For normal matrices, the bound improves from the Crouzeix--Palencia constant to the exact spectral-theorem value.  Indeed, if $A$ is normal and $\sigma(A)\subset\Sigma_\beta$, then
\[
  \|\e^{-hA}-r_m^*(A)\|_2
  =
  \max_{\lambda\in\sigma(A)}
  |\e^{-h\lambda}-r_m^*(\lambda)|
  \le E_{m,\beta}(\alpha m).
\]
Conversely, scalar matrices are normal, so the sector supremum is the worst case over all normal matrices.  Consequently,
\[
  \sup_{N\ge1}\
  \sup_{\substack{A\in\C^{N\times N}\ {\rm normal}\\
                  \sigma(A)\subset\Sigma_\beta}}
  \|\e^{-hA}-r_m^*(A)\|_2
  =
  E_{m,\beta}(\alpha m).
\]
Thus the worst-case normal-matrix error equals the scalar best error exactly and therefore inherits the same prefactor-resolved two-sided order, with no additional matrix constant.

\begin{remark}[Sector-uniform versus matrix-specific]\label{rem:matrix-specific}
The worst-case result is sector-uniform.  A particular matrix or starting vector may see faster convergence if the numerical range occupies only a small part of the sector; Arnoldi-dependent residual estimates can exploit this additional information \citep{MoretNovati2004,BeckermannGuettel2012}.
\end{remark}

\subsection{The shift-and-invert Krylov subspace}\label{subsec:shift-invert-space}

The M\"obius correspondence also identifies the rational search space exactly.  For a matrix $A$ with $W(A)\subset\Sigma_\beta$ and for $q>0$, set $B_q:=(qI+hA)^{-1}$ and $Z_q:=(qI-hA)(qI+hA)^{-1}=2qB_q-I$.  The inverse exists because $-q/h\notin W(A)$.

\begin{proposition}[One repeated pole equals one shift-and-invert Krylov space]\label{prop:space-equality}
For every nonzero vector $b$ and every $m\ge0$,
\begin{equation}\label{eq:space-equality}
  \{r(A)b:r\in\Rm_m(q;h)\}
  =\operatorname{span}\{b,B_qb,B_q^2b,\ldots,B_q^mb\}
  =:\mathcal K_{m+1}(B_q,b).
\end{equation}
\end{proposition}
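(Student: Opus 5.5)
The plan is to prove the equality by showing that both sides are images of $\Pm_m$ under evaluation at $A$ applied to $b$, written in two different polynomial bases. First, note that the M\"obius transform $w=(q-hz)/(q+hz)$ can be rewritten as $w=2q/(q+hz)-1$. At the matrix level this is the identity $Z_q=2qB_q-I$ recorded just before the statement. Every $r\in\Rm_m(q;h)$ has the form $r(z)=p((q-hz)/(q+hz))$ with $p\in\Pm_m$. Since $-q/h\notin W(A)\supset\sigma(A)$, the rational function $r$ is analytic on the spectrum. The holomorphic functional calculus, or simply the fact that rational functions with poles off the spectrum compose algebraically, then gives $r(A)=p(Z_q)$. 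In detail, $(qI-hA)$ and $(qI+hA)^{-1}$ commute because both are rational functions of $A$, so $p((q-hz)(q+hz)^{-1})$ evaluated at $A$ equals $p(Z_q)$.

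Second, I substitute $Z_q=2qB_q-I$. Then $p(Z_q)=\tilde p(B_q)$, where $\tilde p(t):=p(2qt-1)$. The affine substitution $t\mapsto 2qt-1$ is invertible because $q>0$, so $p\mapsto\tilde p$ is a bijection of $\Pm_m$ onto itself that preserves degree. It follows that
\[
\{r(A)b:r\in\Rm_m(q;h)\}=\{p(Z_q)b:p\in\Pm_m\}=\{\tilde p(B_q)b:\tilde p\in\Pm_m\}.
\]
The last set is exactly $\operatorname{span}\{b,B_qb,\ldots,B_q^mb\}$, since $\tilde p(B_q)b=\sum_{k\le m}c_kB_q^kb$ with arbitrary coefficients $c_k$.

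The only point needing care is the justification $r(A)=p(Z_q)$, namely that "evaluate the rational function at $A$" agrees with "apply the polynomial to the matrix Möbius image." I would handle this directly at the algebraic level, without invoking an abstract calculus. Write $r(z)=\sum_k c_k(q-hz)^k(q+hz)^{-k}$; it is the sum of $c_k(qI-hA)^k(qI+hA)^{-k}=c_kZ_q^k$, using the commutativity noted above. Invertibility of $qI+hA$ holds because $W(A)\subset\Sigma_\beta$ excludes the negative real point $-q/h$ and $\sigma(A)\subset W(A)$. No other obstacle is expected; the statement holds for any nonzero $b$, and equality of the sets as subspaces does not require the Krylov vectors to be linearly independent.
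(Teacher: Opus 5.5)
Your proposal is correct and follows the paper's proof: both write $r(A)=p(Z_q)$, use $Z_q=2qB_q-I$ (so polynomials of degree at most $m$ in $Z_q$ and in $B_q$ coincide), and apply the result to $b$. The only difference is that you explicitly verify $r(A)=p(Z_q)$, using that $qI-hA$ and $(qI+hA)^{-1}$ commute, whereas the paper treats this as immediate from the definition.
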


\begin{proof}
By definition, every $r\in\Rm_m(q;h)$ has the form $r(A)=p(Z_q)$ with $p\in\Pm_m$.  Since $Z_q=2qB_q-I$, polynomials of degree at most $m$ in $Z_q$ are exactly polynomials of degree at most $m$ in $B_q$.  Applying them to $b$ proves \eqref{eq:space-equality}.
\end{proof}

Thus $q_m=\alpha m$ requires repeated applications of the single inverse $(q_mI+hA)^{-1}$, reusing one factorization or preconditioner. Combining Proposition~\ref{prop:space-equality} with Proposition~\ref{prop:numerical-range-transfer} gives the following search-space bound.

\begin{corollary}[Dimension-independent shift-and-invert approximation bound]\label{cor:shift-invert-bound}
Let $K\Subset(0,\alpha_c(\gamma))$.  If $m\ge1$, $W(A)\subset\Sigma_\beta$, $\alpha\in K$, and $q_m=\alpha m$, then there exists $C_{K,\beta}>0$, independent of $m$, $A$, and $b$, such that
\begin{equation}\label{eq:subspace-distance}
  \dist\!\left(
     \e^{-hA}b,
     \mathcal K_{m+1}((q_mI+hA)^{-1},b)
  \right)
  \le C_{K,\beta}\,\|b\|_2\,
       m^{-1/2}H_\beta(\alpha)^m.
\end{equation}
Here $\dist$ denotes Euclidean distance to the indicated subspace.  For normal $A$ with $\sigma(A)\subset\Sigma_\beta$, the same conclusion holds with the Crouzeix--Palencia factor removed from the underlying scalar-to-matrix estimate.
\end{corollary}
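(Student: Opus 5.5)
The approach is to exhibit one explicit element of the Krylov space, namely $r_m^*(A)b$ with $r_m^*$ the best repeated-pole approximant fixed at the start of this section. We then bound its distance to $\e^{-hA}b$ by the matrix-function error of Proposition~\ref{prop:numerical-range-transfer} and insert the scalar upper bound of Theorem~\ref{thm:best-error}.

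First, $r_m^*\in\Rm_m(q_m;h)$ by construction. Proposition~\ref{prop:space-equality} with $q=q_m$ gives $r_m^*(A)b\in\mathcal K_{m+1}((q_mI+hA)^{-1},b)$. The inverse exists because $-q_m/h\notin\Sigma_\beta\supset W(A)$. Hence
\[
  \dist\!\left(\e^{-hA}b,\mathcal K_{m+1}((q_mI+hA)^{-1},b)\right)
  \le\|(\e^{-hA}-r_m^*(A))b\|_2
  \le\|\e^{-hA}-r_m^*(A)\|_2\,\|b\|_2 .
\]
Second, \eqref{eq:crouzeix-transfer} bounds the operator norm by $(1+\sqrt2)E_{m,\beta}(\alpha m)$. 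In the normal case, the spectral-theorem identity displayed after Proposition~\ref{prop:numerical-range-transfer} gives the bound $E_{m,\beta}(\alpha m)$ with no extra factor.

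Third, the upper half of \eqref{eq:best-error-strong} gives $E_{m,\beta}(\alpha m)\le C_Km^{-1/2}H_\beta(\alpha)^m$ uniformly for $\alpha\in K$, but only for $m\ge m_0(K)$. To obtain a constant valid for every $m\ge1$, we handle the finitely many degrees $1\le m<m_0$ separately. For these, use the trivial bound $E_{m,\beta}(\alpha m)\le\|g_{\alpha m}\|_{K_\beta}\le1$, obtained by taking $p=0$ and noting that $|\e^{-hz}|\le1$ on $\Sigma_\beta$. Since $H_\beta$ is continuous and positive on the compact set $K$, the quantity $m^{-1/2}H_\beta(\alpha)^m$ is bounded below by a positive constant there. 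Enlarging $C_K$ therefore covers the small degrees, and we take $C_{K,\beta}:=(1+\sqrt2)C_K$. This constant is independent of $A$, $b$, $N$ and $m$, because every ingredient is scalar.

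The only step needing care is the uniformity in $m$ just described. The rest is a direct chaining of Proposition~\ref{prop:space-equality}, Proposition~\ref{prop:numerical-range-transfer} and Theorem~\ref{thm:best-error}, so no genuine obstacle is expected.
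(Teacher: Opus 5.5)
Your proposal is correct and follows the paper's proof exactly: place $r_m^*(A)b$ in the Krylov space via Proposition~\ref{prop:space-equality}, bound the distance by $\|\e^{-hA}-r_m^*(A)\|_2\|b\|_2$, and then apply Proposition~\ref{prop:numerical-range-transfer} and Theorem~\ref{thm:best-error}. Your extra step for the finitely many degrees below the threshold of Theorem~\ref{thm:best-error} uses $E_{m,\beta}(\alpha m)\le1$ and $\min_K H_\beta>0$, and it correctly supplies a detail the paper leaves implicit, since the corollary is stated for all $m\ge1$.
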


\begin{proof}
The vector $r_m^*(A)b$ belongs to the Krylov space by Proposition~\ref{prop:space-equality}.  Therefore the distance to that space is at most $\|\e^{-hA}b-r_m^*(A)b\|_2\le\|\e^{-hA}-r_m^*(A)\|_2\,\|b\|_2$. Apply Proposition~\ref{prop:numerical-range-transfer} and Theorem~\ref{thm:best-error}.
\end{proof}

The normal-matrix improvement in Corollary~\ref{cor:shift-invert-bound} is a search-space statement. It does not by itself remove the Crouzeix--Palencia factor from the projected bound below, because a compression of a normal transformed operator need not itself be normal unless additional structure is present.

For the standard transformed-variable RD--Arnoldi projection used in \citet{MoretNovati2004,Novati2011}, one obtains an equally direct algorithmic bound.  Let $b\ne0$, and, assuming no earlier Arnoldi breakdown, let $V_{m+1}$ be an orthonormal basis of $\mathcal K_{m+1}(B_q,b)$ generated by Arnoldi, with first vector $b/\|b\|_2$.  (If breakdown occurs earlier, the Krylov space is invariant and the projected function action is exact.)  Put $H_{m+1}:=V_{m+1}^*B_qV_{m+1}$, $\widehat Z_{m+1}:=2qH_{m+1}-I$, and $u_m^{\rm SI}:=V_{m+1}g_q(\widehat Z_{m+1})e_1\|b\|_2$, where $e_1$ is the first coordinate vector in $\C^{m+1}$.  This is the transformed-variable RD--Arnoldi projection of \citet{MoretNovati2004}.  The companion half-line paper \citep{XueZhang2026} instead compresses the original matrix; the search space is the same, but the projected vector generally is not.

\begin{corollary}[Projected shift-and-invert Arnoldi bound]\label{cor:projected-shift-invert}
Assume $W(A)\subset\Sigma_\beta$.  For every $q>0$ and $m\ge0$,
\begin{equation}\label{eq:projected-SI-bound}
  \|\e^{-hA}b-u_m^{\rm SI}\|_2
  \le 2(1+\sqrt2)\,E_{m,\beta}(q)\,\|b\|_2.
\end{equation}
Consequently, for $m\ge1$ and $q_m=\alpha m$ with $\alpha$ in a compact $K\Subset(0,\alpha_c(\gamma))$, there exists $C_{K,\beta}>0$, independent of $m$, $A$, and $b$, such that
\[
  \|\e^{-hA}b-u_m^{\rm SI}\|_2
  \le C_{K,\beta}\,\|b\|_2\,m^{-1/2}H_\beta(\alpha)^m.
\]
\end{corollary}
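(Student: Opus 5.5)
The proof will be a polynomial-exactness argument for the projected operator, followed by Crouzeix--Palencia applied twice: once to $Z_q$ and once to its compression $\widehat Z_{m+1}$. The point is to work in the transformed variable $w$, where the approximation problem lives on the lens $K_\beta$.

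First, locate the numerical ranges of $Z_q$ and $\widehat Z_{m+1}$. For $x=(qI+hA)y$,
\[
x^*Z_qx=(qy-hAy)^*(qy+hAy)=q^2\|y\|^2-h^2\|Ay\|^2+2\ii qh\,\Im(y^*Ay),
\]
and $\|x\|^2=q^2\|y\|^2+h^2\|Ay\|^2+2qh\Re(y^*Ay)$. I expect the sector condition $W(A)\subset\Sigma_\beta$ to give $W(Z_q)\subset K_\beta$, the sectorial analogue of the fact that the Cayley transform of an accretive operator is a contraction. I would verify this via the two-disk description \eqref{eq:lens-disks}: membership in $D(\pm\ii\cot(\beta/2),\csc(\beta/2))$ is a quadratic-form inequality equivalent to $\e^{\mp\ii(\pi-\beta)/2}A$ being accretive, which follows from $W(A)\subset\Sigma_\beta$. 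Since $B_q$ and $Z_q=2qB_q-I$ are affinely related, $\widehat Z_{m+1}=V_{m+1}^*Z_qV_{m+1}$, so $W(\widehat Z_{m+1})\subset W(Z_q)\subset K_\beta$. This step is the main obstacle. Getting the lens inclusion exactly, rather than only a disk inclusion, requires care with the quadratic forms; if it proves delicate, I would instead prove the inclusion of $W$ in each disk separately, since $K_\beta$ is their intersection.

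Second, use exactness on the Krylov space. Since $\e^{-hA}=g_q(Z_q)$ as a holomorphic functional calculus identity, with $Z_q$ having spectrum away from $-1$ because $-q/h\notin W(A)$, I would fix any $p\in\Pm_m$ and split
\[
\e^{-hA}b-u_m^{\rm SI}=\bigl(g_q(Z_q)-p(Z_q)\bigr)b-V_{m+1}\bigl(g_q-p\bigr)(\widehat Z_{m+1})e_1\|b\|_2 .
\]
This uses the standard Arnoldi exactness $p(Z_q)b=V_{m+1}p(\widehat Z_{m+1})e_1\|b\|_2$ for $\deg p\le m$. That exactness holds because $p(Z_q)b$ is a polynomial of degree at most $m$ in $B_q$ applied to $b$, and Arnoldi polynomial exactness holds for $H_{m+1}$ up to degree $m$.

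Third, bound each term by Crouzeix--Palencia on $K_\beta$, giving $(1+\sqrt2)\|g_q-p\|_{K_\beta}\|b\|_2$ for each. One subtlety is that $g_q$ is singular at $-1\in K_\beta$. I would handle this by applying the inequality on $W(\cdot)$, which is a compact subset of $K_\beta$ avoiding $-1$. Equivalently, I would use the sup over the numerical range itself, where $g_q-p$ is analytic nearby; as in Proposition~\ref{prop:numerical-range-transfer}, it suffices that $-1\notin W(Z_q)$, and by the sector condition $W(Z_q)$ is indeed compact and avoids $-1$. Taking the infimum over $p$ and using \eqref{eq:best-error} yields \eqref{eq:projected-SI-bound}. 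The asymptotic consequence then follows immediately from Theorem~\ref{thm:best-error}.
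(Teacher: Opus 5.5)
Your proposal is correct and follows essentially the same route as the paper: prove $W(\widehat Z_{m+1})\subset W(Z_q)\subset K_\beta$ with $-1\notin W(Z_q)$, split the error by a (best) polynomial using Arnoldi polynomial exactness, and apply Crouzeix--Palencia to $Z_q$ and to $\widehat Z_{m+1}$. The one difference is how you get the lens inclusion. You use the operator-norm bounds $\|Z_q\mp\ii\cot(\beta/2)I\|_2\le\csc(\beta/2)$, which do reduce exactly to accretivity of $\e^{\pm\ii(\pi-\beta)/2}A$; the sign pairing is the opposite of what you wrote, but both conditions hold, so this is immaterial. The paper instead estimates $u=qx^*B_qx$ directly by Cauchy--Schwarz. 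With your route, two small points need their own one-line checks: the case $\beta=0$, where the disks degenerate and $Z_q$ is simply Hermitian with spectrum in $(-1,1]$; and $-1\notin W(Z_q)$, which does not follow from the lens inclusion but from $\Re(x^*B_qx)\ge q\|B_qx\|_2^2>0$.
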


\begin{proof}
We first verify the numerical-range inclusion $W(Z_q)\subset K_\beta$ directly. Let $x\in\C^N$ with $\|x\|_2=1$, set $y=B_qx$, and write $u:=q x^*B_qx=q x^*y$. Since $x=(qI+hA)y$, Cauchy--Schwarz and $W(A)\subset\Sigma_\beta$ give
\[
  \Re u=q^2\|y\|_2^2+qh\Re(y^*Ay),
  \qquad
  |u|^2\le q^2\|y\|_2^2,
  \qquad
  |\Im u|=qh|\Im(y^*Ay)|.
\]
Put $\theta=\beta/2$. If $\theta=0$, then $u$ is real and $0<u\le1$, so $x^*Z_qx=2u-1\in(-1,1]$. If $0<\theta<\pi/2$, the sector condition yields
\[
  |\Im u|
  \le \tan\theta\,qh\Re(y^*Ay)
  \le \tan\theta\,(\Re u-|u|^2).
\]
For $z:=x^*Z_qx=2u-1$, this is equivalent to
\[
  1-|z|^2\ge 2|\Im z|\cot\theta,
\]
which, by the disk representation~\eqref{eq:lens-disks}, is exactly $z\in K_\beta$. Hence $W(Z_q)\subset K_\beta$. Moreover $\Re u\ge q^2\|B_qx\|_2^2>0$; compactness of the unit sphere therefore shows that $W(Z_q)$ stays a positive distance from the boundary point $-1$, where $g_q$ has its essential singularity. Compression gives $W(\widehat Z_{m+1})\subset W(Z_q)$.  For any $p\in\Pm_m$, Arnoldi polynomial exactness yields $p(Z_q)b=V_{m+1}p(\widehat Z_{m+1})e_1\,\|b\|_2$. Choose a best polynomial $p_m^*$ for $g_q$ on $K_\beta$.  Subtracting this identity from the two functional-calculus terms gives
\[
  \e^{-hA}b-u_m^{\rm SI}
  =(g_q-p_m^*)(Z_q)b
  -V_{m+1}(g_q-p_m^*)(\widehat Z_{m+1})e_1\,\|b\|_2.
\]
Apply the Crouzeix--Palencia inequality to $Z_q$ and to $\widehat Z_{m+1}$ and use the two numerical-range inclusions.  Each term is at most $(1+\sqrt2)E_{m,\beta}(q)\|b\|_2$, which proves \eqref{eq:projected-SI-bound}; Theorem~\ref{thm:best-error} gives the final estimate.
\end{proof}

The reuse of one factorization or preconditioner refers to a run with a prescribed degree budget $m$ and its corresponding pole parameter $q_m$ (physical repeated pole $s_m=-q_m/h$).  If the budget is changed and $q_m=\alpha m$ is retuned, the shifted matrix changes as well, and a new run is necessary. 

Finally, the unique minimizer of the proved noncoalescing rate supplies a sector-informed repeated shift.  With
\begin{equation}\label{eq:matrix-alpha-star}
  q_{m,\beta}^*:=\alpha_\beta^*m,
  \qquad
  s_{m,\beta}^*:=-\alpha_\beta^*m/h,
\end{equation}
the upper bounds in Corollaries~\ref{cor:shift-invert-bound} and~\ref{cor:projected-shift-invert} have root factor
\begin{equation}\label{eq:matrix-rho-star}
  \rho_\beta^*
  =\tan\!\left(\frac{\pi}{4\gamma_\beta}\right).
\end{equation}
For $\beta=0$ this is the classical global half-line root-rate optimum.  For $\beta>0$, \eqref{eq:matrix-alpha-star} minimizes only the proved noncoalescing sector-uniform rate, not a matrix-specific or unproved objective.  The simple choice $q_m=m$ remains a nearby angle-independent alternative.

\subsection{A nonnormal Arnoldi experiment}\label{subsec:matrix-experiment}

We test the projected approximation $u_m^{\rm SI}$ on a family whose numerical range is controlled directly. For $N=64,128$, let $D_N$ be diagonal with
\[
 (D_N)_{j+1,j+1}=10^{4((37j)\bmod N)/(N-1)},\quad 0\le j<N,
 \qquad
 A_N=D_N+\sqrt3\,D_N^{1/2}S_ND_N^{1/2},
\]
where $S_N$ has $1/2$ on the first upper diagonal, $-1/2$ on the first lower diagonal, and zeros elsewhere. Since $S_N^*=-S_N$ and $\|S_N\|_2\le1$, every complex vector $v$ satisfies
\[
 |\Im(v^*A_Nv)|\le\sqrt3\,v^*D_Nv
 =\sqrt3\,\Re(v^*A_Nv).
\]
Thus $W(A_N)\subset\Sigma_{2\pi/3}$. The diagonal and skew-symmetric parts do not commute, so $A_N$ is nonnormal. We take $h=0.1$ and normalize the vector with entries $\sin(\sqrt2 j)+\cos(\sqrt3 j)$, $1\le j\le N$, to obtain $b$.

For each degree $m$ and each rule $q=\alpha_{2\pi/3}^*m$ or $q=m$, we perform a separate Arnoldi run on $qB_q$ with two orthogonalization passes. This scalar rescaling gives the same projected approximation as $B_q$. Each run reuses one LU factorization and uses $m+1$ shifted solves, including the last product needed to assemble the $(m+1)$-dimensional compression. A dense scaling-and-squaring exponential supplies the reference action; an independent exponential-action calculation differs by less than $4.5\times10^{-15}\|b\|_2$. A third orthogonalization pass changes the computed actions by less than $2.1\times10^{-14}\|b\|_2$. We display only $m\le28$; the $m=32$ errors are too close to these checks to interpret as convergence data.

\begin{figure}[htbp]
\centering
\includegraphics[width=.98\textwidth]{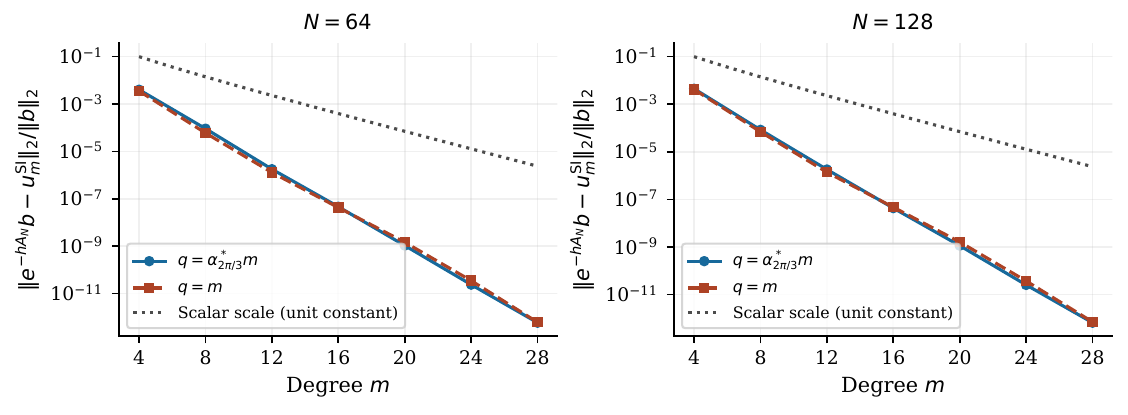}
\caption{Actual projected Arnoldi errors $\|\e^{-hA_N}b-u_m^{\rm SI}\|_2/\|b\|_2$ for two nonnormal matrices. Each marker is a separate fixed-shift run. The dotted curve is the scalar scale $m^{-1/2}(\rho_{2\pi/3}^*)^m$ with unit constant, shown only for comparison; it is not a numerical error bound.}
\label{fig:arnoldi}
\end{figure}

The two dimensions give similar behavior, and both matrices converge faster than the sector-uniform scalar scale. The optimized ratio is not uniformly better at small degrees. This is consistent with Corollary~\ref{cor:projected-shift-invert}: it gives a dimension-independent upper bound, while a particular matrix and vector can exploit much more information than containment in an infinite sector.

\section{Concluding remarks}\label{sec:conclusion}

For each fixed sector opening, the boundary Faber integral and its global contour deformation give a two-saddle expansion. A two-coefficient noncancellation argument and a uniform tail estimate turn that expansion into $E_{m,\beta}(\alpha m)\asymp m^{-1/2}H_\beta(\alpha)^m$ on compact subcritical ratio sets. On the proved noncoalescing interval, the exponential factor has an explicit angle-dependent minimizer. The essential addition to the companion half-line analysis is the fractional-power contour geometry needed for this sharp two-sided sector order.

The scalar result gives sector-uniform matrix bounds and an upper bound of the same order for projected shift-and-invert Arnoldi. It does not give a matching lower bound for an individual Arnoldi run or a matrix-specific optimal pole. The finite-degree pole search and the nonnormal experiment illustrate these distinctions, with the simple rule \(q=m\) remaining competitive at the tested degrees. The theory also provides a sharp benchmark for this rule, whose asymptotic degree overhead relative to the subcritical rate minimizer is quantified by \eqref{eq:degree-ratio}.

The analysis does not cover coalescence, vanishing ratios, or a joint limit as $\beta\uparrow\pi$. Coalescence suggests an Airy-type local model, while vanishing ratios require a sectorial counterpart of the companion's fixed-to-moving-pole transition. Another open question is which computable interpolation schemes inherit the sharp $m^{-1/2}$ scale: subexponential stability alone preserves only the root factor.

\section*{Acknowledgements}

\noindent\textbf{Funding.} The first author was partially supported by NSF grant DMS-2608905.

\smallskip

\noindent\textbf{Conflict of interest.} The authors declare no conflicts of interest.

\smallskip

\noindent\textbf{Data and code availability.} Supporting computational scripts, raw arrays, and auxiliary validation data are available from the corresponding author upon reasonable request.

\smallskip

\noindent\textbf{Use of artificial intelligence tools.} OpenAI ChatGPT (GPT-5.6 Sol and GPT-6 Astra) was used to suggest candidate derivation routes and assist with consistency checks for parts of the saddle analysis and error-transfer arguments, as well as literature comparison, exposition, and code review. The authors carefully reviewed and checked all AI-assisted materials and assume full responsibility for the mathematical correctness, citations, computational results, scholarly content, and conclusions.

\end{document}

%% file: pole_selection_table.tex
\begin{table}[tbp]
\centering
\caption{Finite-degree pole selection on $[0.35,1.35]$. Here $\widetilde E_m^*=\widetilde E_m(\widetilde\alpha_m)$; all error columns are numerical diagnostics.}
\label{tab:finite-poles}
\begin{tabular}{c r c c c c c}
\hline
$\beta/\pi$ & $m$ & $\widetilde\alpha_m$ & $\sqrt m|\widetilde\alpha_m-\alpha_\beta^*|$ & $\widetilde E_m^*$ & $\widetilde E_m(\alpha_\beta^*)/\widetilde E_m^*$ & $\widetilde E_m(1)/\widetilde E_m^*$ \\
\hline
$1/3$ & 8 & $1.085$ & $0.970$ & $2.975\times10^{-4}$ & $1.527$ & $1.498$ \\
$1/3$ & 12 & $0.702$ & $0.136$ & $1.626\times10^{-5}$ & $1.380$ & $1.196$ \\
$1/3$ & 16 & $0.723$ & $0.076$ & $9.404\times10^{-7}$ & $1.239$ & $2.069$ \\
$1/3$ & 20 & $0.735$ & $0.030$ & $5.643\times10^{-8}$ & $1.100$ & $2.551$ \\
$2/3$ & 8 & $1.067$ & $0.722$ & $4.961\times10^{-3}$ & $1.529$ & $1.239$ \\
$2/3$ & 12 & $0.970$ & $0.550$ & $8.279\times10^{-4}$ & $1.529$ & $1.157$ \\
$2/3$ & 16 & $0.923$ & $0.445$ & $1.452\times10^{-4}$ & $1.532$ & $1.546$ \\
$2/3$ & 20 & $0.895$ & $0.373$ & $2.616\times10^{-5}$ & $1.533$ & $1.645$ \\
\hline
\end{tabular}
\end{table}